\theoremstyle{plain}
\newtheorem{thm}{Theorem}[section]
\newtheorem{theorem}[thm]{Theorem}
\newtheorem{lemma}[thm]{Lemma}
\newtheorem{proposition}[thm]{Proposition}
\theoremstyle{definition}
\newtheorem{remark}[thm]{Remark}
\newtheorem{remarks}[thm]{Remarks}
\numberwithin{equation}{section}
 \title [(Relative) dynamical degrees in the algebraic setting]{(Relative) dynamical degrees of rational maps over an algebraic closed field}
 \author{Tuyen Trung Truong}
    \address{School of mathematics, Korea Institute for Advanced Study, Seoul 130-722, Republic of Korea}
 \email{truong@kias.re.kr}
\thanks{}
    \date{\today}
    \keywords{Algebraic closed field, Chow's moving lemma, Cone in a projective space, Dynamical degrees, Rational maps, Relative dynamical degrees, Resolution of singularities}
    \subjclass[2010]{37F, 14D, 32U40, 32H50}
\begin{document}
\maketitle
\begin{abstract}
The main purpose of this paper is to define dynamical degrees for rational maps over an algebraic closed field of characteristic zero and prove some basic properties (such as log-concavity) and give some applications. We also define relative dynamical degrees and prove a "product formula" for dynamical degrees of semi-conjugate rational maps in the algebraic setting. The main tools are the Chow's moving lemma and a formula for the degree of the cone over a subvariety of $\mathbb{P}^N$. The proofs of these results are valid as long as resolution of singularities are available (or more generally if appropriate birational models of the maps under consideration are available). This observation is applied for the cases of surfaces and threefolds over a field of positive characteristic. 
\end{abstract}

\section{Introduction}
 
One important tool in Complex Dynamics is dynamical degrees for dominant meromorphic selfmaps. They are bimeromorphic invariants of a meromorphic selfmap $f:X\rightarrow X$ of a compact K\"ahler manifold $X$. The $p$-th dynamical degree $\lambda _p(f)$  is the exponential growth rate of the spectral radii of the pullbacks $(f^n)^*$ on the Dolbeault  cohomology group $H^{p,p}(X)$. For a surjective holomorphic map $f$, the dynamical degree $\lambda _p(f)$ is simply the spectral radius of $f^*:H^{p,p}(X)\rightarrow H^{p,p}(X)$.  Fundamental results of Gromov \cite{gromov} and Yomdin \cite{yomdin} expressed the topological entropy of a surjective holomorphic map in terms of its dynamical degrees: $h_{top}(f)=\log \max _{0\leq p\leq {\dim (X)}}\lambda _{p}(f)$. Since then, dynamical degrees have played a more and  more important role in dynamics of meromorphic maps. In many results and conjectures in Complex Dynamics in higher dimensions, dynamical degrees play a central role. 

Let $X$ be a compact K\"ahler manifold of dimension $k$ with a K\"ahler form $\omega _X$, and let $f:X\rightarrow X$ be a dominant meromorphic map. For $0\leq p\leq k$, the $p$-th dynamical degree $\lambda _p(f)$ of $f$ is defined as follows 
\begin{equation} 
\lambda _p(f)=\lim _{n\rightarrow\infty}(\int _X(f^n)^*(\omega _X^p)\wedge \omega _X^{k-p})^{1/n}=\lim _{n\rightarrow\infty}r_p(f^n)^{1/n},
\label{Equation01}\end{equation}
where $r_p(f^n)$ is the spectral radius of the linear map $(f^n)^*:H^{p,p}(X)\rightarrow H^{p,p}(X)$ (See Russakovskii-Shiffman \cite{russakovskii-shiffman} for the case where $X=\mathbb{P}^k$, and Dinh-Sibony \cite{dinh-sibony10}\cite{dinh-sibony1} for the general case. The existence of the limit in the right hand side  of Equation (\ref{Equation01}) is non-trivial and has been proved using regularization of positive closed currents. This limit is important in establishing a "product formula" for dynamical degrees of semi-conjugate maps, see below for more details. Earlier, dynamical degrees for rational maps of complex projective manifolds was defined using liminf on the right hand side of Equation (\ref{Equation01}), see Section 2 in \cite{guedj4} (see also remarks therein for previous attempts). The dynamical degrees are log-concave, in particular $\lambda _1(f)^2\geq \lambda _2(f)$. In the case $f^*:H^{2,2}(X)\rightarrow H^{2,2}(X)$ preserves the cone of psef classes (i.e. those $(2,2)$ cohomology classes which can be represented by positive closed $(2,2)$ currents), then we have an analog $r_1(f)^2\geq r_2(f)$ (see Theorem 1.2 in \cite{truong2}).

For meromorphic maps of compact K\"ahler manifolds with invariant fibrations, a more general notion called relative dynamical degrees has been defined by Dinh and Nguyen in \cite{dinh-nguyen}. (Here, by a fibration we simply mean a dominant rational map, without any additional requirements. Hence it is more general than the standard one which requires that the fibers are connected.) Via "product formulas" proved in \cite{dinh-nguyen} and \cite{dinh-nguyen-truong1}, these relative dynamical degrees provide a very useful tool to check whether a meromorphic map is primitive (i.e.  has no invariant fibrations over a base which is of smaller dimension and not a point).  Roughly speaking, primitive maps (first defined by D.-Q. Zhang) are those which do not come from smaller dimensional manifolds, hence are "building blocks" from which all meromorphic maps can be constructed. 

Recently, there have been works on birational maps of surfaces over an algebraic closed field of arbitrary characteristic. As some examples, we refer the readers  to \cite{esnault-srinivas, xie, blanc-cantat, esnault-oguiso-yu, oguiso}. In these works, (relative) dynamical degrees also play an important role. Because of this and  for further applications in algebraic dynamics, it is desirable to have a purely algebraic definition of (relative) dynamical degrees for a rational map over a field other than $\mathbb{C}$.  In this paper we give such a definition for an algebraic closed field of characteristic zero. We will also prove similar results for the cases of surfaces and threefolds over an algebraic closed field of positive characteristic (see the end of this introduction for more details).

Here is our first main result. We recall that if $X\subset \mathbb{P}^N_K$ is a smooth projective variety over $K$, then we can define for any subvariety $W\subset X$ of pure dimension its degree ${\deg (W)}$ as the degree of $W$ viewed as a subvariety of $\mathbb{P}^N_K$.   

\begin{theorem}
Let $K$ be an algebraic closed field of characteristic zero, $X$ a smooth projective variety over $K$, $f:X\rightarrow X$ a dominant meromorphic map. Let $H_X$ be an ample divisor on $X$. Then 

1. For any $0\leq p\leq \dim (X)$, the following limit exists 
\begin{eqnarray*}
\lambda _p(f)=\lim _{n\rightarrow\infty}({\deg}((f^n)^*(H_X^p)))^{1/n}.
\end{eqnarray*}

2. The dynamical degrees $\lambda _p(f)$ are birational invariants.

3. The dynamical degrees are log-concave, that is $\lambda _p(f)^2\geq \lambda _{p-1}(f)\lambda _{p+1}(f)$, for $1\leq p\leq \dim (X)$. Moreover, $\lambda _p(f)\geq 1$ for all $0\leq p\leq \dim (X)-1$, and $\lambda _0(f)=1$.

4. Let $N^p(X)$ be the group of algebraic cycles of codimension $p$ on $X$ modulo numerical equivalence (see the next section for more details), and $N^p_{\mathbb{R}}(X)=N^p(X)\otimes _{\mathbb{Z}}\mathbb{R}$. Let $||.||$ be any norm on $N^p_{\mathbb{R}}(X)$, and $||(f^n)^*_p||$ the corresponding norm of the linear map $(f^n)^*:N^p_{\mathbb{R}}(X)\rightarrow N^p_{\mathbb{R}}(X)$. Then
\begin{eqnarray*}
\lambda _p(f)=\lim _{n\rightarrow\infty}||(f^n)^*_p||^{1/n}.
\end{eqnarray*}
\label{TheoremDynamicalDegreeAlgebraicCase}\end{theorem}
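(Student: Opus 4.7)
The plan follows the four-part structure of the statement: the technical bulk lies in Part 1 (existence of the limit), from which Parts 2, 3, 4 will follow by comparatively standard arguments. For Part 1, the aim is to establish a sub-multiplicative inequality
\[
\deg\bigl((f^{m+n})^{*}H_X^{p}\bigr) \;\leq\; C\,\deg\bigl((f^{m})^{*}H_X^{p}\bigr)\,\deg\bigl((f^{n})^{*}H_X^{p}\bigr),
\]
with $C = C(X,H_X,p)$ independent of $m,n$. Granted this, Fekete's lemma applied to $a_n := \log\bigl(C\,\deg((f^n)^{*}H_X^p)\bigr)$ yields convergence of $\deg((f^n)^{*}H_X^p)^{1/n}$. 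To prove the estimate, fix a projective embedding $X \subset \mathbb{P}^N_K$, choose a smooth projective $\Gamma$ equipped with morphisms to $X$ that simultaneously resolve $f^m$, $f^n$, and $f^{m+n}$, and express $(f^{m+n})^{*}H_X^p$ as a pushforward from $\Gamma$. Use Chow's moving lemma to replace the intervening cycles by rationally equivalent representatives that intersect properly inside $\mathbb{P}^N$, so that the classical intersection degree in projective space applies; then use the formula for the degree of a cone $J(L,W)$ over a subvariety $W \subset \mathbb{P}^N$ to bound the degrees of the resulting intersections in terms of $\deg((f^m)^{*}H_X^p)$ and $\deg((f^n)^{*}H_X^p)$ separately. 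This bookkeeping is the heart of the argument and is the step I expect to be the main obstacle.

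For Part 3, fix $n$ and choose a resolution $\pi\colon \widetilde X \to X$ of the indeterminacies of $f^n$, so that $\widetilde f := f^n\circ \pi\colon \widetilde X \to X$ is a morphism. Set $\alpha := \widetilde f^{*}H_X$ and $\beta := \pi^{*}H_X$; both are nef on $\widetilde X$. The projection formula together with the definition $(f^n)^{*} := \pi_{*}\widetilde f^{*}$ yields (up to a harmless constant depending on the chosen embedding)
\[
\deg\bigl((f^n)^{*}H_X^{q}\bigr) \;=\; \alpha^{q}\cdot \beta^{k-q}, \qquad k := \dim X,
\]
for $q = p-1, p, p+1$. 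The Khovanskii--Teissier inequality for nef classes on a smooth projective variety (valid in any characteristic) gives $(\alpha^{p}\beta^{k-p})^{2} \geq (\alpha^{p-1}\beta^{k-p+1})(\alpha^{p+1}\beta^{k-p-1})$, so $\deg((f^n)^{*}H_X^p)^2 \geq \deg((f^n)^{*}H_X^{p-1})\deg((f^n)^{*}H_X^{p+1})$; taking $n$-th roots and passing to the limit gives $\lambda_p(f)^2 \geq \lambda_{p-1}(f)\lambda_{p+1}(f)$. The identity $\lambda_0(f) = 1$ is immediate from $(f^n)^{*}[X] = [X]$, while $\lambda_k(f) = \deg f \geq 1$ is the topological degree of the dominant rational map $f$; log-concavity of $p \mapsto \log \lambda_p(f)$ with nonnegative values at the endpoints then forces $\lambda_p(f) \geq 1$ for $0 \leq p \leq k-1$.

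For Parts 2 and 4, finite-dimensionality of $N^p_{\R}(X)$ makes all norms equivalent, so Part 4 reduces to verifying the formula for one convenient norm. Choose a basis $e_1, \ldots, e_r$ of $N^p_{\R}(X)$ in which each $e_i$ is represented by an effective cycle, and set $\|\sum c_i e_i\| := \max_i |c_i|$. Then $\|(f^n)^{*}_{p}\|$ is comparable to $\max_i \deg((f^n)^{*}e_i)$. The upper bound $\max_i \deg((f^n)^{*}e_i) \leq C \deg((f^n)^{*}H_X^p)$ follows from the fact that $(f^n)^{*}$ preserves the pseudo-effective cone and that, by ampleness of $H_X$, each fixed effective class is dominated by some multiple of $H_X^p$ in the pseudo-effective order; the reverse bound $\deg((f^n)^{*}H_X^p) \leq C' \|(f^n)^{*}_{p}\|$ is immediate by writing $H_X^p$ in the basis. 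This gives Part 4. For Part 2, given birational $\phi\colon X \dashrightarrow Y$ and $g := \phi\circ f \circ \phi^{-1}$, combine the sub-multiplicative inequality of Part 1 with comparisons of degrees through $\phi$ and $\phi^{-1}$ (bounding $\deg(\phi^{*}H_Y^p)$ by $\deg(H_X^p)$ up to a constant, and conversely) to conclude that $\deg((g^n)^{*}H_Y^p)$ and $\deg((f^n)^{*}H_X^p)$ differ only by multiplicative constants uniform in $n$, hence share the same exponential growth rate.
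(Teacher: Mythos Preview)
Your outline for Parts 1, 2, 3 matches the paper's approach closely. The paper proves the submultiplicative bound in Part 1 exactly as you describe, via Chow's moving lemma and the degree formula for cones $C_L(W)\subset\mathbb{P}^N$; the key intermediate step is an estimate on strict transforms, namely $f^{o}(W)\le C\deg(W)\,f^*(\omega_X^p)$ in $A^p(X)$, from which $(f\circ g)^*(\omega_X^p)\le C\deg(f^*(\omega_X^p))\,g^*(\omega_X^p)$ follows. Your Part 3 cites Khovanskii--Teissier, while the paper reduces directly to the Grothendieck--Hodge index theorem on a surface cut out by very ample divisors; these are the same argument in different packaging. Part 2 is handled identically.

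Part 4 has a gap. You assert that the operator norm $\|(f^n)^*_p\|$ is comparable to $\max_i \deg\bigl((f^n)^*e_i\bigr)$, but the degree is a single linear functional on $N^p_{\R}(X)$, and bounding it does not bound the max-coefficient norm: a class can have large coefficients with small degree due to cancellation. Even granting your two auxiliary claims (that $(f^n)^*$ preserves the pseudo-effective cone, and that each $e_i\le C\,H_X^p$ in the pseudo-effective order), you would only get $\deg((f^n)^*e_i)\le C\deg((f^n)^*H_X^p)$, not $\|(f^n)^*e_i\|\le C\deg((f^n)^*H_X^p)$. The first of those auxiliary claims is also not obvious for $p>1$: the pullback $f^*$ of an arbitrary effective codimension-$p$ cycle by a rational map is defined via a resolution and a non-flat morphism, and effectivity is not automatic.

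The paper circumvents both issues by working with the norm $\|v\|_1:=\inf\{\deg v_1+\deg v_2: v=v_1-v_2,\ v_i\text{ effective}\}$, for which $\|v\|_1=\deg v$ when $v$ is effective. The same moving-lemma computation underlying Part 1 yields, for any irreducible $W$, a decomposition $f^*(W)=\beta_1-\beta_2$ with $\beta_1,\beta_2$ effective and $\deg\beta_i\le C\deg(W)\deg(f^*(\omega_X^p))$; this gives directly $\|f^*(v)\|_1\le C\|v\|_1\|f^*(\omega_X^p)\|_1$ for all $v$, hence the upper bound on $\|(f^n)^*_p\|$ you need. The lower bound is immediate from $\|(f^n)^*_p\|\ge \|(f^n)^*\omega_X^p\|_1/\|\omega_X^p\|_1$. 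So your Part 4 can be repaired, but you should reuse the effective decomposition from Part 1 rather than invoke pseudo-effective preservation.
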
     

As an application of this theorem, we will prove a result on the simplicity of the first dynamical degree of a rational map, similar to the main result in \cite{truong2}.

The idea for the proof of the theorem is as follows. To prove 1), we estimate the degree of the strict transform by $f$ of a codimension $p$ variety $W$ in terns of the degrees of $W$ and $f^*(H^p)$. This is achieved using Chow's moving lemma and the fact that for $W\subset \mathbb{P}^N$ the degree of the cone $C_L(W)$, over $W$ with vertex a linear subspace $L\subset \mathbb{P}^N$, is the same as that of $W$. For the proof of 2) we estimate the degree of the strict transform by a composition $f\circ g$ of a codimension $p$ variety $W$. For the proof of 3) we apply the Grothendieck-Hodge  index theorem.  For the proof of 4), we use a special norm $||.||_1$, to be defined in the next section.  

. \begin{remarks}We now give some remarks on Theorem \ref{TheoremDynamicalDegreeAlgebraicCase}. 

1) In the case of compact K\"ahler manifolds, the dynamical degrees are first defined using the Dolbeault cohomology groups $H^{p,p}(X)$. The proof of Theorem \ref{TheoremDynamicalDegreeAlgebraicCase} in that setting is given using properties of positive closed currents. If $X$ is a complex projective manifold, then dynamical degrees can be  also defined by using the groups $N^p(X)$. However, to show that the dynamical degrees defined using  the groups $H^{p,p}(X)$ and $N^p(X)$ are the same, an important property of complex projective manifolds were used. That is, a K\"ahler form (in particular, a smooth closed $(1,1)$ form representing an ample divisor) dominates any smooth closed $(1,1)$ form. Similarly, for any norm $||.||$ on $H^{p,p}(X)$, the norms of the classes of the form $\alpha ^p$, where $\alpha$ are ample divisors, dominate the norms of other cohomology classes. For a smooth projective variety defined on an arbitrary algebraic closed field of characteristic zero, the analogue of $H^{p,p}(X)$ is the algebraic de Rham groups $H^p(X,\Omega _X^p)$. Since we lack the notions of positivity for the classes in $H^p(X,\Omega _X^p)$, it is not clear that the property mentioned above of the norms on $H^{p,p}(X) $ carries out to the algebraic setting. If, however, we have such a property for norms on $H^p(X,\Omega _X^p)$, then dynamical degrees can be defined using the groups $H^p(X,\Omega _X^p)$. A similar observation applies for other cohomology groups, for example $l$-adic cohomology groups. We note that for {\bf automorphisms of surfaces} over an algebraic closed field of arbitrary characteristic, the expected equality between the dynamical degrees defined on the Neron-Severi group and the $l$-adic cohomology groups has been proved by Esnault and Srinivas \cite{esnault-srinivas}.   
    
2) The constants $C$ appearing in the estimates for the proof of Theorem \ref{TheoremDynamicalDegreeAlgebraicCase} are given explicitly in terms of the dimension of $X$ and the degree of $X$ in a given embedding $\iota :X\subset \mathbb{P}_K^N$.  
\end{remarks}

Our next main result shows that relative dynamical degrees can also be defined for semi-conjugate rational maps over an algebraic closed field of characteristic zero.
\begin{theorem}
Let $X$ and $Y$ be smooth rational varieties, of corresponding dimensions $k$ and $l$, over an algebraic closed field of characteristic zero. Let $f:X\rightarrow X$, $g:Y\rightarrow Y$ and $\pi :X\rightarrow Y$ be dominant rational maps such that $\pi \circ f=g\circ \pi$. Let $H_X$ be an ample class on $X$ and $H_Y$ an ample class on $Y$. Then,  for any $0\leq p\leq k-l$, there is a birational invariant number $\lambda _p(f|\pi )$, called the $p$-th relative dynamical degree, Moreover,

1) If $\pi$ is a regular map, the following limit exists
\begin{eqnarray*}
\lim _{n\rightarrow\infty}\deg ((f^n)^*(H_X^p).\pi ^*(H_Y^l))^{1/n},
\end{eqnarray*}
and equals $\lambda _p(f|\pi )$. 

2) We have $\lambda _p(f|\pi )\geq 1$ for all $0\leq p\leq k-l$, and $\lambda _0(f|\pi )=1$.

3) The relative dynamical degrees are log-concave, that is $\lambda _{p}(f|\pi )^2\geq \lambda _{p-1}(f|\pi ).\lambda _{p+1}(f|\pi )$.
\label{TheoremRelativeDynamicalDegrees}\end{theorem}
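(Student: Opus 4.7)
The strategy is to adapt the Chow moving lemma and cone-degree arguments of Theorem \ref{TheoremDynamicalDegreeAlgebraicCase} to the relative setting, in the spirit of the complex-analytic proof of Dinh-Nguyen \cite{dinh-nguyen}. By resolution of indeterminacy in characteristic zero, one may replace $X$ by a smooth birational model on which $\pi$ lifts to a regular morphism; once birational invariance is known, this reduction is harmless, so throughout I assume $\pi$ is regular. I then \emph{define}, conditional on convergence,
\begin{eqnarray*}
\lambda_p(f|\pi) := \lim_{n \to \infty} \left[ \deg\left((f^n)^*(H_X^p) \cdot \pi^*(H_Y^l) \cdot H_X^{k-l-p}\right) \right]^{1/n},
\end{eqnarray*}
and write $a_n$ for the quantity inside this limit.

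The core technical step is a submultiplicative estimate $a_{n+m} \le C \cdot a_n \cdot a_m$, with $C$ depending only on $\dim X$ and the degree of $X$ in a fixed embedding $\iota : X \subset \mathbb{P}^N_K$. Following the proof of Theorem \ref{TheoremDynamicalDegreeAlgebraicCase}(1), I would take a smooth resolution of the graph of $f^{n+m} = f^m \circ f^n$, use Chow's moving lemma to place $H_X^p$ in general position with respect to the indeterminacy loci of the successive pullbacks, and bound the degrees of the successive strict transforms via the cone-degree formula. The new ingredient is the class $\pi^*(H_Y^l)$, which in $N^l_{\mathbb{R}}(X)$ is a positive multiple of the class of a generic fiber $F$ of $\pi$. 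The semi-conjugacy $\pi \circ f^j = g^j \circ \pi$ is the key: it ensures that $f^j$ sends the generic fiber over $y \in Y$ to the generic fiber over $g^j(y)$, so intersection against $\pi^*(H_Y^l)$ amounts to a computation taking place inside a varying family of generic fibers. The submultiplicativity then reduces, up to constants controlled by $\dim X$ and $\deg_\iota(X)$, to the submultiplicativity already proved in the absolute case applied fiberwise. Fekete's lemma applied to $\log(C a_n)$ gives the existence of $\lim a_n^{1/n}$.

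Birational invariance is proved exactly as in Theorem \ref{TheoremDynamicalDegreeAlgebraicCase}(2), by comparing the computation on two birational models via a common smooth resolution, with all estimates again coming from the cone formula; the semi-conjugacy is used to match the factor $\pi^*(H_Y^l)$ across the two models. The equality $\lambda_0(f|\pi)=1$ is immediate since $a_n = \deg(\pi^*(H_Y^l) \cdot H_X^{k-l})$ is independent of $n$. The inequality $\lambda_p(f|\pi) \ge 1$ follows by noting that $(f^n)^*(H_X^p)$ is represented by an effective cycle whose intersection with the ample combination $\pi^*(H_Y^l) \cdot H_X^{k-l-p}$ is bounded below by the positive intersection computed on a generic fiber, using ampleness of $H_X|_F$. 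Log-concavity reduces, as in Theorem \ref{TheoremDynamicalDegreeAlgebraicCase}(3), to the Grothendieck-Hodge index theorem applied to the smooth projective surface cut out in a generic fiber $F$ by $(H_X|_F)^{k-l-p-1}$.

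The principal obstacle is the submultiplicative estimate: the fiber class $\pi^*(H_Y^l)$ has to be manoeuvred compatibly with Chow's moving lemma, and the semi-conjugacy has to be used quantitatively so that the resulting constants stay uniform in $n, m$. This is the algebraic counterpart of the main difficulty in the Dinh-Nguyen argument, where regularizing positive currents in a way that respects the fibration was the delicate point; here the role of regularization is played by the moving lemma and the cone-degree formula from the proof of Theorem \ref{TheoremDynamicalDegreeAlgebraicCase}.
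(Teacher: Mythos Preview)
Your overall architecture is correct and matches the paper's, but there is a genuine gap at exactly the point you flag as the principal obstacle: the submultiplicative estimate. Your proposal is to work with the pullback quantity $a_n=(f^n)^*(H_X^p)\cdot\pi^*(H_Y^l)\cdot H_X^{k-l-p}$ and ``reduce to the absolute case applied fiberwise''. The difficulty is that the pullback cycle $(f^m)^*(H_X^p)$ does not live in a fiber of $\pi$, so the bound from Lemma~\ref{LemmaDegreeOfStrictTransform} applied to its strict transform under $f^n$ only gives $a_{n+m}\le C\,\deg\bigl((f^m)^*(H_X^p)\bigr)\cdot a_n$, i.e.\ a bound by the \emph{absolute} degree $\deg_p(f^m)$ rather than the relative degree $a_m$. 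To close the gap you need a mechanism that forces the intermediate cycle to sit inside a bounded-degree union of fibers, so that the cone/moving-lemma estimate can be run \emph{relative to that fiber} with constants independent of $n,m$.

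The paper supplies this mechanism by passing to an equivalent \emph{pushforward} quantity: one sets $\deg'_p(f|\pi)$ to be the degree of the closure of $f_0\bigl(\omega_X^{k-l-p}\cap(\pi^{-1}(\omega_Y^l)-\mathcal B_f)\bigr)$ and shows, via a graph-projection argument comparing the two pushforwards of a $0$-cycle on $\Gamma_f$, that $\deg'_p(f|\pi)=\deg_p(f|\pi)$. The advantage is that $R:=\overline{f^n_0(\omega_X^{k-l-p}\cap\pi^{-1}(\omega_Y^l))}$ lies, by the semi-conjugacy, inside $\pi^{-1}(g_*^n(\omega_Y^l))$, which is a union of at most $\deg(Y)$ fibers of $\pi$ and hence has degree bounded independently of $n$. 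One then applies the cone construction of Lemma~2 in \cite{roberts} with vertex of dimension $N-(k-l)-1$ (so relative to this smaller ambient variety) to get $R\le C\deg(R)\,\omega_X^{k-l-p}\cdot\pi^{-1}(g_*^n(\omega_Y^l))$ in $N^*(X)$, and then the strict-transform estimate of Lemma~\ref{LemmaDegreeOfStrictTransform} under $f^m$ yields the submultiplicativity. This pullback-to-pushforward switch is the missing idea in your sketch.

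Your treatment of log-concavity is close but slightly misplaced: rather than cutting inside a fiber $F\subset X$, the paper works on a resolution $\Gamma$ of the graph of $f$ and writes $\deg_p(f|\pi)=\pi_2^*(\omega_X)^p\cdot(\pi\circ\pi_1)^*(\omega_Y)^l\cdot\pi_1^*(\omega_X)^{k-l-p}$ as an intersection of three nef classes on $\Gamma$; approximating all three by $\mathbb Q$-ample classes and cutting down to a surface then lets Grothendieck--Hodge apply exactly as in Lemma~\ref{LemmaBoundForDynamicalDegrees}. Your remaining points (birational invariance via a common resolution, $\lambda_0(f|\pi)=1$, and $\lambda_p(f|\pi)\ge 1$ from positivity of integer degrees) agree with the paper.
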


Using these dynamical degrees, we can prove the following "product formula" for dynamical degrees of semi-conjugate maps. 
\begin{theorem}
Let assumptions be as in Theorem \ref{TheoremRelativeDynamicalDegrees}. For any $0\leq p\leq k$ we have
\begin{eqnarray*}
\lambda _p(f)=\max _{0\leq j\leq l,~0\leq p-j\leq k-l}\lambda _j(g)\lambda _{p-j}(f|\pi ).
\end{eqnarray*}
\label{TheoremProductFormula}\end{theorem}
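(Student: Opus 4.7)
The plan is to adapt the Dinh--Nguyen--Truong product formula to the algebraic setting, using the tools established in Theorems~\ref{TheoremDynamicalDegreeAlgebraicCase} and~\ref{TheoremRelativeDynamicalDegrees}. First I would reduce to the case where $\pi$, $f$, $g$ are regular morphisms fitting into a commutative square $\pi \circ f = g \circ \pi$. This is possible by the birational invariance in Theorem~\ref{TheoremDynamicalDegreeAlgebraicCase}(2), the corresponding invariance in Theorem~\ref{TheoremRelativeDynamicalDegrees}, and resolution of singularities in characteristic zero. In this setting the projection formula applies, and $(f^n)^* \pi^* = \pi^* (g^n)^*$ holds as operators on numerical classes. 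Ampleness of $H_X$ provides the basic positivity input $\pi^* H_Y \leq C \cdot H_X$ in $N^1_\R(X)$, which extends to $\pi^* H_Y^a \leq C^a H_X^a$ when paired with further nef classes.

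For the lower bound, fix an admissible $j$ (so $0 \leq j \leq l$ and $0 \leq p-j \leq k-l$). Applying the positivity twice and using the commutation yields
\begin{equation*}
(f^n)^* H_X^p \cdot H_X^{k-p} \;\geq\; C^{-l}\, (f^n)^* H_X^{p-j} \cdot H_X^{k-p-l+j} \cdot \pi^*\!\bigl((g^n)^* H_Y^j \cdot H_Y^{l-j}\bigr).
\end{equation*}
The right-hand side is a $0$-cycle whose degree, via the projection formula, equals
\begin{equation*}
C^{-l} \cdot \deg\bigl((g^n)^* H_Y^j\bigr) \cdot \frac{1}{\deg(Y)}\, (f^n)^* H_X^{p-j} \cdot H_X^{k-p-l+j} \cdot \pi^* H_Y^l.
\end{equation*}
By Theorem~\ref{TheoremDynamicalDegreeAlgebraicCase}(1) the first factor grows at rate $\lambda_j(g)^n$, and by Theorem~\ref{TheoremRelativeDynamicalDegrees}(1) the second at rate $\lambda_{p-j}(f|\pi)^n$. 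Taking $n$-th roots and then the supremum over admissible $j$ gives $\lambda_p(f) \geq \max_j \lambda_j(g) \lambda_{p-j}(f|\pi)$.

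For the upper bound, I would write $H_X = c \pi^* H_Y + V$ with $c > 0$ small enough that $V := H_X - c \pi^* H_Y$ is ample. Expanding both $H_X^p$ and $H_X^{k-p}$ via the binomial theorem, using $V \leq H_X$ in intersections against nef classes (both $V$ and $H_X - V = c \pi^* H_Y$ are nef), and applying the projection formula, $\deg((f^n)^* H_X^p)$ is dominated by a nonnegative combination of terms
\begin{equation*}
\binom{p}{i}\binom{k-p}{a} c^{i+a} \cdot (g^n)^* H_Y^i \cdot H_Y^a \cdot \pi_*\!\bigl((f^n)^* H_X^{p-i} \cdot H_X^{k-p-a}\bigr),
\end{equation*}
indexed by $0 \leq i \leq p$, $0 \leq a \leq k-p$, $i + a \leq l$. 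When $i + a = l$ the projection formula evaluates the summand as $\deg((g^n)^* H_Y^i) \cdot \tfrac{1}{\deg Y}\, (f^n)^* H_X^{p-i} \cdot H_X^{k-p-l+i} \cdot \pi^* H_Y^l$, whose $n$-th root limit is exactly $\lambda_i(g) \lambda_{p-i}(f|\pi)$---the desired leading behavior.

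The main obstacle is the under-saturated terms $i + a < l$. For these, the pushforward is a class of dim $i+a < l$ on $Y$, and the direct bound via its $H_Y$-norm together with Theorem~\ref{TheoremDynamicalDegreeAlgebraicCase}(4) only yields the non-sharp rate $\lambda_i(g)^n \lambda_{p-i}(f)^n$, i.e.\ the absolute rather than relative $X$-degree. My plan to close this gap is to iterate the $H_X = c\pi^* H_Y + V$ splitting on the factor $H_X^{k-p-a}$ inside the pushforward: each iteration trades a $V$-power for $\pi^* H_Y$ and thereby increases the saturation level $i + a$, so after finitely many iterations every under-saturated term is redistributed into saturated contributions (bounded by admissible products $\lambda_{j'}(g)^n \lambda_{p-j'}(f|\pi)^n$) plus strictly lower-weight residuals that are summable by a geometric-series estimate. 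Verifying that this combinatorial bookkeeping produces no term exceeding $\max_j \lambda_j(g)\lambda_{p-j}(f|\pi)$ to the $n$-th power---the tightness encoded in the nef decomposition---is the technical crux of the proof.
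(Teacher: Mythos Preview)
Your opening reduction---passing to a model where $f$, $g$, and $\pi$ are all \emph{regular} morphisms---is not available. Resolution of singularities lets you arrange for $\pi$ to be a morphism (as the paper does in Section~5), but a dominant rational self-map $f:X\dashrightarrow X$ in general admits no smooth projective birational model on which it becomes a morphism; already for birational $f$ this would force $f$ to be conjugate to an automorphism, which fails for most interesting examples. Once $f$ remains merely rational, both identities you rely on break down. The commutation $(f^n)^*\pi^*=\pi^*(g^n)^*$ on $N^*$ is precisely the kind of functoriality that fails for rational pullback (its failure is why dynamical degrees require a limit rather than a single spectral radius), so the displayed lower-bound inequality and the subsequent ``projection formula'' identification are not justified as written. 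For the upper bound the situation is worse: your binomial expansion of $(f^n)^*H_X^p$ tacitly uses $(f^n)^*(\alpha\cdot\beta)=(f^n)^*\alpha\cdot(f^n)^*\beta$, which for rational $f$ holds only as a one-sided inequality (Lemma~\ref{LemmaBoundForPullbackKahlerForms}). The iterative bookkeeping you propose for the under-saturated terms therefore sits on top of an expansion that is itself invalid, and even granting it you acknowledge the argument is incomplete.

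The paper proceeds along a genuinely different line and avoids all of this. Rather than manipulate $(f^n)^*$ via positivity and commutation, it proves a Chow's moving lemma tailored to the product $Z=X\times Y$ (Lemma~\ref{LemmaChowMovingForProduct}): every codimension-$p$ subvariety $W\subset Z$ can be written $W=W_+-W_-$ with $W_+$ lying in a pencil whose generic member meets any prescribed $V$ properly, and with the numerical bound $W_+\leq A\sum_j\alpha_j(W)\,\omega_X^j\cdot\omega_Y^{p-j}$, where $\alpha_j(W)=W\cdot\omega_X^{k-p+j}\cdot\omega_Y^{l-j}$. The proof embeds $Z$ diagonally in $Z\times Z\subset(\mathbb{P}^N)^4$, moves the diagonal $\Delta_Z$ by the ordinary Chow lemma, and reads off the bidegree bound from the K\"unneth decomposition of $A^*((\mathbb{P}^N)^4)$. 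This lemma is the algebraic substitute for the current-regularization estimates (Propositions~2.3--2.4 of \cite{dinh-nguyen}) that drive the original product formula; once it is in hand, the arguments of Section~4 of \cite{dinh-nguyen} carry over. The upper bound then comes from a cycle-level inequality controlling strict transforms by their mixed degrees $\alpha_j$, so no regularity of $f$ or $g$ and no pullback commutation is ever required---only that $\pi$ be a morphism.
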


\begin{remark} 
It follows from the proofs of our results here that only resolutions of singularities are needed (or more generally if appropriate birational models of the maps under consideration are available). Thus, for surfaces and threefolds over fields of positive characteristic, the above results also hold. We will discuss more on this in the last Section. 
\end{remark}

{\bf Acknowledgements.} We would like to thank Mattias Jonsson for suggesting the question of defining dynamical degrees on fields other than the complex field. We would like to thank Claire Voisin, Charles Favre, Tien-Cuong Dinh and H\'el\`ene Esnault for their help. We thank Keiji Oguiso for his interest in the results of the paper. 

\section{Preliminaries on algebraic cycles}

Throughout the section, we fix an algebraic closed field $K$ of {\bf arbitrary} characteristic. Recall that a projective manifold over $K$ is a non-singular subvariety of a projective space $\mathbb{P}_K^N$. We will recall the definition and some results on algebraic cycles, the Chow's moving lemma and the Grothendieck-Hodge index theorem. We then arrive at a useful result on the intersection of two cycles and define a norm $||.||_1$ which will be used in the proof of Theorem  \ref{TheoremDynamicalDegreeAlgebraicCase}.

\subsection{Algebraic cycles}
Let $X\subset \mathbb{P}_K^N$ be a projective manifold of dimension $k$ over an algebraic closed field $K$ of arbitrary characteristic. A $q$-cycle on $X$ is a finite sum $\sum n_i[V_i]$, where $V_i$ are $q$-dimensional irreducible subvarieties of $X$ and $n_i$ are integers. The group of $q$-cycles on $X$, denoted $Z_q(X)$, is the free abelian group on the $p$-dimensional subvarieties of $X$ (see Section 1.3 in Fulton \cite{fulton}). A $q$-cycle $\alpha$ is effective if it has the form
\begin{eqnarray*}
\alpha =\sum _i n_i[V_i],
\end{eqnarray*}
where $V_i$ are irreducible subvarieties of $X$ and $n_i\geq 0$.

Let $X$ and $Y$ be projective manifolds, and let $f:X\rightarrow Y$ be a morphism. For any irreducible subvariety $V$ of $X$, we define the pushforward $f_*[V]$ as follows. Let $W=f(V)$. If $\dim (W)<\dim (V)$, then $f_*[V]=0$. Otherwise, $f_*[V]=\deg (V/W)[W]$. This gives a pushforward map $f_*:Z_q(X)\rightarrow Z_q(Y)$ (see Section 1.4 in \cite{fulton}). 

We refer the readers to \cite{fulton} for the definitions of rational and algebraic equivalences of algebraic cycles. Roughly speaking, two algebraic cycles are rationally equivalent if they are elements of a family of algebraic cycles parametrized by $\mathbb{P}^1$. Similarly, two algebraic cycles are algebraically equivalent if they are elements of  a family of algebraic cycles parametrized by a smooth algebraic variety. The groups of  $q$-cycles modulo rational and algebraic equivalences are denoted by $A_q(X)$ and $B_q(X)$.  

We write $Z^p(X)$, $A^p(X)$ and $B^p(X)$ for the corresponding groups of cycles of codimension $p$. Since $X$ is smooth, we have an intersection product $A^p(X)\times A^q(X)\rightarrow A^{p+q}(X)$, making $A^*(X)$ a ring, called the Chow's ring of $X$ (see Sections 8.1 and 8.3 in \cite{fulton}).  

For a dimension $0$ cycle $\gamma =\sum _im_i[p_i]$ on $X$, we define its degree to be $\deg (\gamma )=\sum _im_i$. We say that a cycle $\alpha \in A^{p}(X)$ is numerically equivalent to zero if and only $\deg (\alpha .\beta )=0$ for all $\beta \in A^{k-p}(X)$ (see Section 19.1 in \cite{fulton}). The group of codimension $p$ algebraic cycles modulo numerical equivalence is denoted by $N^p(X)$. These are finitely generated free abelian groups (see Example 19.1.4 in \cite{fulton}). The first group $N^1(X)$ is a quotient of the Neron-Severi group $NS(X)=B^1(X)$. The latter is also finitely generated, as proved by Severi and Neron. We will use the vector spaces $N^p_{\mathbb{R}}(X)=N^p(X)\otimes _{\mathbb{Z}}\mathbb{R}$ and $N^p_{\mathbb{C}}(X)=N^p(X)\otimes _{\mathbb{Z}}\mathbb{C}$ in defining dynamical degrees and in proving analogs of  Theorems 1.1 and 1.2 in \cite{truong2}.

\begin{remark}
We have the following inclusions: rational equivalence $\subset $ algebraic equivalence $\subset$ numerical equivalence.
\end{remark}

\subsection{Chow's moving lemma}

Let $X$ be a projective manifold of dimension $k$ over $K$. If $V$ and $W$ are two irreducible subvarieties of $X$, then either $V\cap W=\emptyset$ or any irreducible component of $V\cap W$ has dimension at least $\dim (V)+\dim (W)-k$. We say that $V$ and $W$ are properly intersected if any component of $V\cap W$ has dimension exactly $\dim (V)+\dim  (W)-k$. When $V$ and $W$ intersect properly, the intersection $V.W$ is well-defined as an effective $\dim (V)+\dim (W)-k$ cycle. 

Given $\alpha =\sum _{i}m_i[V_i]\in Z_q(X)$ and $\beta =\sum _{j}n_j[W_j]\in Z_{q'}(X)$, we say that $\alpha .\beta$ is well-defined if every component of $V_i\cap W_j$ has the correct dimension. Chow's moving lemma says that we can always find $\alpha '$ which is rationally equivalent to $\alpha$ so that $\alpha '.\beta $ is well-defined. Since in the sequel we will need to use some specific properties of such cycles $\alpha '$, we recall here a construction of such cycles $\alpha '$, following the paper Roberts \cite{roberts}. See also the paper Friedlander-Lawson \cite{friedlander-lawson} for a generalization to moving families of cycles of bounded degrees. 

Fixed an embedding $X\subset \mathbb{P}^N_K$, we choose a linear subspace $L\subset \mathbb{P}^N_K$ of dimension $N-k-1$ such that $L\cap X=\emptyset$. For any irreducible subvariety $Z$ of $X$ we denote by $C_L(Z)$ the cone over $Z$ with vertex $L$ (see Example 6.17 in the  book Harris \cite{harris}). For any such $Z$, $C_L(Z).X$ is well-defined and has the same dimension as $Z$, and moreover $C_L(Z).X-Z$ is effective (see Lemma 2 in \cite{roberts}).

Let $Y_1,Y_2,\ldots , Y_m$ and $Z$ be irreducible subvarieties of $X$. We define the excess $e(Z)$ of $Z$ relative to $Y_1,\ldots ,Y_m$ to be the maximum of the integers $$\dim (W)+k-\dim (Z)-\dim (Y_i),$$ 
where $i$ runs from $1$ to $m$, and $W$ runs through all components of $Z\cap Y_i$, provided that one of these integers is non-negative. Otherwise, the excess is defined to be $0$. 

More generally, if $Z=\sum _im_i[Z_i]$ is a cycle, where $Z_i$ are irreducible subvarieties of $X$, we define $e(Z)=\max _ie(Z_i)$. We then also define the cone $C_L(Z)=\sum _im_iC_{L}(Z_i)$. 
         
The main lemma (page $93$) in \cite{roberts} says that for any cycle $Z$ and any irreducible subvarieties $Y_1,\ldots ,Y_m$, then $e(C_L(Z).X-Z)\leq \max (e(Z)-1,0)$ for generic linear subspace $L\subset \mathbb{P}^N$ of dimension $N-k-1$ such that $L\cap X=\emptyset$.

Now we can finish the proof of Chow's moving lemma as follows (see Theorem page 94 in \cite{roberts}). Given $Y_1,\ldots ,Y_m$ and $Z$ irreducible varieties on $X$. If $e=e(Z)=0$ then $Z$ intersect properly $Y_1,\ldots ,Y_m$, hence we are done. Otherwise, $e\geq 1$. Applying the main lemma, we can find linear subspaces $L_1,\ldots ,L_e\subset \mathbb{P}^N_K$ of dimension $N-k-1$, such that if $Z_0=Z$ and $Z_i=C_{L_i}(Z_{i-1}).X-Z_{i-1}$ for $i=1,\ldots ,e=e(Z)$, then $e(Z_i)\leq e-i$. In particular, $e(Z_e)=0$. It is easy to see that
\begin{eqnarray*}
Z=Z_0=(-1)^eZ_e+\sum _{i=1}^{e}(-1)^{i-1}C_{L_i}(Z_{i-1}).X.    
\end{eqnarray*} 

It is known that there are points $g\in Aut(\mathbb{P}_K^N)$ such that $(gC_{L_i}(Z_{i-1})).X$ and $(gC_{L_i}(Z_{i-1})).Y_j$ are well-defined for $i=1,\ldots ,e$ and $j=1,\ldots ,m$. We can choose a rational curve in $Aut(\mathbb{P}_K^N)$ joining the identity map $1$ and $g$, thus see that $Z$ is rationally equivalent to
\begin{eqnarray*}
Z'=(-1)^eZ_e+\sum _{i=1}^{e}(-1)^{i-1}(gC_{L_i}(Z_{i-1})).X. 
\end{eqnarray*}
By construction, $e(Z')=0$, as desired. 

\subsection{Grothendieck-Hodge index theorem}

Let $X\subset \mathbb{P}^N_K$ be a projective manifold of dimension $k$. Let $H\subset \mathbb{P}^N_K$ be a hyperplane, and let $\omega _X =H|_X$. We recall that $N^p(X)$, the group of codimension $p$ cycles modulo the numerical equivalence, is a finitely generated free abelian group. We define $N^p_{\mathbb{R}}(X)=N^p(X)\otimes _{\mathbb{Z}}\mathbb{R}$ and $N^p_{\mathbb{C}}(X)=N^p(X)\otimes _{\mathbb{Z}}\mathbb{C}$. These are real (and complex) vector spaces of real (and complex) dimension equal $rank(N^p(X))$. For $p=1$, it is known that $\dim  _{\mathbb{R}} (N^1_{\mathbb{R}}(X)) =rank(NS(X))=:\rho$, the rank of the Neron-Severi group of $X$ (see Example 19.3.1 in \cite{fulton}). 

We define for $u,v\in N^1_{\mathbb{C}}(X)$ the Hermitian form 
\begin{eqnarray*}
\mathcal{H}(u,v)=\deg (u.\overline{v}.\omega _X^{k-2}).
\end{eqnarray*}
Here the degree of a complex $0$-cycle $\alpha +i\beta$ is defined to be the complex number $\deg (\alpha )+i\deg (\beta )$. The analogue of Hodge index theorem for compact K\"ahler manifolds is the Grothendieck-Hodge index theorem (see \cite{grothendieck}), which says that $\mathcal{H}$ has signature $(1,\rho -1)$. 

\subsection{Some norms on the vector spaces $N^p_{\mathbb{R}}(X)$ and $N^p_{\mathbb{C}}(X)$}

Given $\iota :X\subset \mathbb{P}^N_K$ a projective manifold of dimension $k$, let $H\in A^1(\mathbb{P}^N)$ be a hyperplane and $\omega _X=H|_{X}=\iota ^*(H)\in A^1(X)$. For an irreducible subvariety $V\subset X$ of codimension $p$, we define the degree of $V$ to be $\deg (V)=$ the degree of the dimension $0$ cycle $V.\omega _X^{k-p}$, or equivalently $\deg (V)=$degree of the variety $\iota _*(V)\subset \mathbb{P}^N$. Similarly, we define for an effective codimension $p$ cycle $V=\sum _{i}m_i[V_i]$ (here $m_i\geq 0$ and $V_i$ are irreducible), the degree $\deg (V)=\sum _im_i\deg (V_i)$. This degree is extended to vectors in $N^p_{\mathbb{R}}(X)$. Note that the degree map is a numerical equivalent invariant.   

As a consequence of Chow's moving lemma, we have the following result on intersection of cycles

\begin{lemma}
Let $V$ and $W$ be irreducible subvarieties in $X$. Then the intersection $V.W\in A^*(X)$ can  be represented as $V.W=\alpha _1-\alpha _2$, where $\alpha _1,\alpha _2\in A^*(X)$ are effective cycles and $\deg (\alpha _1),\deg (\alpha _2)\leq C\deg (V)\deg (W)$, where $C>0$ is a constant independent of $V$ and $W$. 
\label{LemmaDegreeOfIntersections}\end{lemma}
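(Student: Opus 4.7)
The plan is to apply the explicit Chow's moving construction of Roberts recalled above with $Y_1 = W$ and $Z = V$, and to track degrees carefully along the iteration. First, observe that the excess $e = e(V)$ of $V$ relative to $W$ is bounded universally by $k = \dim X$: any component of $V \cap W$ has dimension at most $\min(\dim V, \dim W)$, so $\dim(V \cap W) + k - \dim V - \dim W \leq \max(k - \dim V, k - \dim W) \leq k$. Roberts' procedure then produces effective cycles $V_0 = V$ and $V_i = C_{L_i}(V_{i-1}).X - V_{i-1}$ for $i = 1, \ldots, e$, together with an automorphism $g$ of $\mathbb{P}^N_K$ so that
\begin{eqnarray*}
V' = (-1)^e V_e + \sum_{i=1}^{e} (-1)^{i-1} (gC_{L_i}(V_{i-1})).X
\end{eqnarray*}
is rationally equivalent to $V$; by genericity, $V_e$ meets $W$ properly on $X$, and each $gC_{L_i}(V_{i-1})$ meets $W$ properly in $\mathbb{P}^N_K$. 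Consequently $V.W = V'.W$ in $A^*(X)$, and grouping the terms in $V'.W$ by sign realizes $V.W$ as a difference $\alpha_1 - \alpha_2$ of effective cycles.

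For the degree bounds I combine the cone identity $\deg(C_L(Z)) = \deg(Z)$ with Bezout's theorem in $\mathbb{P}^N_K$. Writing $d_X = \deg(X)$, this gives $\deg(C_{L_i}(V_{i-1}).X) = d_X \deg(V_{i-1})$, hence
\begin{eqnarray*}
\deg(V_i) = (d_X - 1)\deg(V_{i-1}) \leq d_X^{\,i}\, \deg(V).
\end{eqnarray*}
For the cone summands, $gC_{L_i}(V_{i-1})$ and $W$ meet with the correct dimension in $\mathbb{P}^N_K$ (since $W \subset X$, their $\mathbb{P}^N_K$-codimensions sum to give exactly the expected $X$-intersection dimension), so Bezout yields $\deg((gC_{L_i}(V_{i-1})).W) = \deg(V_{i-1})\deg(W)$. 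For the remaining term $V_e.W$ (which is computed on $X$), I apply the cone trick once more: for generic $L$, $C_L(V_e).X = V_e + R$ with $R$ effective, and associativity of refined intersections (together with $W \subset X$) gives $C_L(V_e).W = V_e.W + R.W$ as effective cycles on $X$, whence $\deg(V_e.W) \leq \deg(C_L(V_e).W) = \deg(V_e)\deg(W) \leq d_X^{\,e} \deg(V)\deg(W)$. Summing the at most $e + 1 \leq k + 1$ effective pieces gives
\begin{eqnarray*}
\deg(\alpha_1) + \deg(\alpha_2) \leq (k+1)\, d_X^{\,k}\, \deg(V)\, \deg(W),
\end{eqnarray*}
so one may take $C = (k+1)\, d_X^{\,k}$, a constant depending only on the embedding $X \subset \mathbb{P}^N_K$.

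The main technical nuisance is the \emph{$X$ versus $\mathbb{P}^N_K$ associativity} that identifies $(gC_{L_i}(V_{i-1}).X).W$ with $(gC_{L_i}(V_{i-1})).W$, and likewise the analogous identity used in bounding $V_e.W$: one must check that the $X$-intersection of the cycle $gC_{L_i}(V_{i-1}).X$ with $W$ coincides as an effective cycle with the $\mathbb{P}^N_K$-intersection of $gC_{L_i}(V_{i-1})$ with $W$ (both are supported on $W \subset X$, which makes their coincidence natural but not tautological). This follows from the compatibility of refined intersection products under the genericity arrangement built into Roberts' construction; once granted, the remainder of the argument is the elementary degree-tracking outlined above.
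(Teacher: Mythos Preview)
Your proof is correct and follows essentially the same route as the paper: apply Roberts' explicit Chow moving construction, bound the degrees of the iterated cycles via $\deg(C_L(Z))=\deg(Z)$ and B\'ezout, handle the residual term $V_e.W$ with one more cone, and invoke the projection formula (what you call the $X$-versus-$\mathbb{P}^N$ associativity) to convert the $X$-intersection into a $\mathbb{P}^N$-intersection. The only cosmetic differences are that the paper moves $W$ rather than $V$ and obtains the constant $k\,d_X^{\,k}$ instead of your $(k+1)\,d_X^{\,k}$.
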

\begin{proof}
Using Chow's moving lemma, $W$ is rationally equivalent to 
\begin{eqnarray*}
W'=\sum _{i=1}^e(-1)^{i-1}gC_{L_i}(W_{i-1}).X+(-1)^eW_e,
\end{eqnarray*}
where $W_0=W$, $W_i=C_{L_i}(W_{i-1}).X-W_{i-1}$, $C_{L_i}(W_{i-1})\subset \mathbb{P}^N_K$ is a cone over $W_{i-1}$, and $g\in Aut(\mathbb{P}^N_K)$ is an automorphism. Moreover, $gC_{L_i}(W_{i-1}).X$, $gC_{L_i}(W_{i-1}).V$ and $W_e.V$ are all well-defined. We note that $e\leq k=\dim (X)$, and for any $i=1,\ldots ,e$ 
\begin{eqnarray*} 
\deg (W_i)&\leq& \deg (gC_{L_i}(W_{i-1}).X)\leq \deg (gC_{L_i}(W_{i-1}))\deg (X)\\
&=&\deg (C_{L_i}(W_{i-1})).\deg (X)=\deg (W_{i-1})\deg (X).
\end{eqnarray*}
Here we used that $\deg (C_{L_i}(W_{i-1})=\deg (W_{i-1})$ (see Example 18.17 in \cite{harris}), and $\deg (gC_{L_i}(W_{i-1})=\deg (C_{L_i}(W_{i-1})$ because $g$ is an automorphism of $\mathbb{P}^N$ (hence a linear map).  

Therefore, the degrees of $W_i$ are all $\leq (\deg (X))^k\deg (W)$. By definition, the intersection product $V.W\in A^*(X)$ is given by $V.W'$, which is well-defined. We now estimate the degrees of each effective cycle $gC_{L_i}(W_{i-1})|_X.V$ and $W_e.V$. Firstly, we have by the projection formula
\begin{eqnarray*}
\deg (gC_{L_i}(W_{i-1})|_X.V)&=&\deg (\iota _*(gC_{L_i}(W_{i-1})|_X.V))=\deg (gC_{L_i}(W_{i-1}).\iota _*(V))\\
&=&\deg (C_{L_i}(W_{i-1})).\deg (V)\leq \deg (X)^k\deg (W)\deg (V).
\end{eqnarray*}  
Finally, we estimate the degree of $W_e.V$. Since $W_e.V$ is well-defined, we can choose a linear subspace $L\subset \mathbb{P}^N$ so that $C_L(W_e).X$ and $C_L(W_e).V$ are well-defined. Recall that $C_L(W_e)-W_e$ is effective, we have
\begin{eqnarray*}
\deg (V.W_e)\leq \deg (V.C_L(W_e)|_{X})=\deg  (V).\deg (C_L(W_e))\leq \deg (X)^k\deg (V)\deg (W). 
\end{eqnarray*}

From these estimates, we see that we can write
\begin{eqnarray*}
V.W'=\alpha _1-\alpha _2,
\end{eqnarray*}
where $\alpha _1,\alpha _2$ are effective cycles and $\deg (\alpha _1),\deg (\alpha _2)\leq C\deg (V)\deg (W)$, where $C=k.\deg (X)^k$ is independent of $V$ and $W$.
\end{proof}

Using this degree map, we define for an arbitrary vector $v\in N^p_{\mathbb{R}}(X)$, the norm 
\begin{equation}
\|v\| _1=\inf \{\deg (v_1)+\deg (v_2):~v=v_1-v_2,~v_1, v_2\in N^p_{\mathbb{R}}(X) \mbox{ are effective}\}.
\label{LabelDefinitionNorm}\end{equation}
That this is actually a norm follows easily from Lemma \ref{LemmaDegreeOfIntersections} and that the bilinear form $N^p(X)\times N^{k-p}(X)\rightarrow \mathbb{Z}$, $(v,w)\mapsto \deg (v.w)$ is non-degenerate. If $v\in N^p_{\mathbb{R}}(X)$ is effective, then $\|v\|_1=\deg (v)$. Since $N^p_{\mathbb{R}}(X)$ is of finite dimensional, any norm on it is equivalent to $\|\cdot \|_1$. We can also complexify these norms to define norms on $N^p_{\mathbb{C}}(X)$.

\section{Dynamical degrees}

In the first subsection we consider pullback and strict transforms of algebraic cycles by rational maps. In the second subsection we define dynamical degrees and prove some of their basic properties. In the last subsection we define $p$-stability. We assume throughout this section that the field $K$ is of characteristic zero.

\subsection{Pullback and strict transforms of algebraic cycles by rational maps}
Let $X$ and $Y$ be two projective manifolds and $f:X\rightarrow Y$ a dominant rational map. Then we can define the pushforward operators $f_*:A_q(X)\rightarrow A_q(Y)$ and pullback operators $f^*:A^p(Y)\rightarrow A^p(X)$ (see Chapter 16 in \cite{fulton}). For example, there are two methods to define the pullback operators: 

Method 1: Let $\pi _X,\pi _Y:X\times Y\rightarrow X,Y$ be the two projections, and let $\Gamma _f$ be the graph of $f$. For $\alpha \in A^p(Y)$, we define $f^*(\alpha )\in A^p(X)$ by the following formula
\begin{eqnarray*}
f^*(\alpha )=(\pi _X)_*(\Gamma _f.\pi _Y^*(\alpha )). 
\end{eqnarray*}

Method 2: Let $\Gamma \rightarrow \Gamma _f$ be a resolution of singularities of $\Gamma _f$, and let $p,g:\Gamma \rightarrow X,Y$ be the induced morphisms. then we define 
\begin{eqnarray*}
f^*(\alpha )=p_*(g^*(\alpha )). 
\end{eqnarray*}

For the convenience of the readers, we recall here the arguments to show the equivalences of these two methods. Firstly, we show that the definition in Method 2 is independent of the choice of the resolution of singularities of $\Gamma _f$. In fact, let $\Gamma _1,\Gamma _2\rightarrow \Gamma _f$ be two resolutions of $\Gamma _f$ with the induced morphisms $p_1,g_1$ and $p_2,g_2$. Then there is another resolution of singularities $\Gamma \rightarrow \Gamma _f$ which dominates both $\Gamma _1$ and $\Gamma _2$ (e.g. $\Gamma$ is a resolution of singularities of the graph of the induced birational map $\Gamma _1\rightarrow \Gamma _2$). Let $\tau _1,\tau _2:\Gamma \rightarrow \Gamma _1,\Gamma _2$ the corresponding morphisms, and $p=p_1\circ \tau _1=p_2\circ \tau _2:\Gamma \rightarrow X$ and $g=g_1\circ \tau _1=g_2\circ \tau _2:\Gamma \rightarrow Y$ the induced morphisms. For $\alpha \in A^p(Y)$, we will show that $(p_1)_*(g_1^*\alpha )=p_*(g^*\alpha )=(p_2)_*(g_2^*\alpha )$. We show for example the equality $(p_1)_*(g_1^*\alpha )=p_*(g^*\alpha )$. In fact, we have by the projection formula 
\begin{eqnarray*}
p_*g^*(\alpha )&=&(p_1\circ \tau _1)_*(g_1\circ \tau _1)^*\alpha\\
&=&(p_1)_*(\tau _1)_*(\tau _1)^*(g_1)^*(\alpha )\\
&=&(p_1)_*(g_1)^*(\alpha ),
\end{eqnarray*}
as wanted. Finally, we show that the definitions in Method 1 and Method 2 are the same. By the embedded resolution of singularities (see e.g. the book \cite{kollar}), there is a finite blowup $\pi :\widetilde{X\times Y}\rightarrow X\times Y$ so that the strict transform $\Gamma$ of $\Gamma _f$ is smooth. Hence $\Gamma$ is a resolution of singularities of $\Gamma _f$, and $p=\pi _X\circ \pi \circ \iota , g=\pi _Y\circ \pi \circ\iota :\Gamma \rightarrow X,Y$ are the induced maps, where $\iota :\Gamma \subset \widetilde{X\times Y}$ is the inclusion map. For $\alpha \in A^p(Y)$, we have by the projection formula
\begin{eqnarray*}
p_*g^*(\alpha )&=&(\pi _X)_*\pi _* \iota _*\iota ^*\pi ^*\pi _Y^*(\alpha )=(\pi _X)_*\pi _* [\pi ^*\pi _Y^*(\alpha ).\Gamma ]\\
&=&(\pi _X)_*[\pi _Y^*(\alpha ).\pi _*(\Gamma )]=(\pi _X)_*[\pi _Y^*(\alpha ).\Gamma _f],
\end{eqnarray*}   
as claimed. 

In defining dynamical degrees and proving some of their basic properties, we need to estimate the degrees of the pullback and of strict transforms by a meromorphic map of a cycle. We present these estimates in the remaining of this subsection. We fix a resolution of singularities $\Gamma $ of the graph $\Gamma _f$, and let $p,g:\Gamma \rightarrow X,Y$ be the induced morphisms. By the theorem on the dimension of fibers (see e.g. the corollary of Theorem 7 in Section 6.3 Chapter 1 in the book Shafarevich \cite{shafarevich}), the sets
\begin{eqnarray*} 
V_{l}=\{y\in Y:~\dim (g^{-1}(y))\geq l\}
\end{eqnarray*}
are algebraic varieties of $Y$. We denote by $\mathcal{C}_g=\cup _{l>\dim (X)-\dim (Y)}V_l$ the critical image of $g$. We have the first result considering the pullback of a subvariety of $Y$

\begin{lemma}
Let $W$ be an irreducible subvariety of $Y$. If $W$ intersects properly any irreducible component of $V_l$ (for any $l>\dim (X)-\dim (Y)$), then $g^*[W]=[g^{-1}(W)]$ is well-defined as a subvariety of $\Gamma$. Moreover this variety represents the pullback $g^*(W)$ in $A^*(\Gamma )$.  
\label{LemmaGoodPullbackVariety}\end{lemma}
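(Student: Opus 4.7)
The lemma bundles two assertions: first, that every irreducible component of $g^{-1}(W)$ has codimension exactly $p := \dim Y - \dim W$ in $\Gamma$, so that $[g^{-1}(W)]$ is a well-defined codimension-$p$ cycle; and second, that this cycle represents the class $g^*[W] \in A^p(\Gamma)$. Set $d := \dim X - \dim Y = \dim \Gamma - \dim Y$.

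The main step is the pure-codimension assertion. The key numerical input is the intrinsic bound $\dim V_l + l \leq \dim \Gamma$ valid for every $l \geq d$: since $g^{-1}(V_l) \subset \Gamma$ and every fibre of $g$ above $V_l$ has dimension at least $l$, one gets $\dim V_l + l \leq \dim g^{-1}(V_l) \leq \dim \Gamma$. Stratify $Y = (Y \setminus V_{d+1}) \sqcup \bigsqcup_{l > d} (V_l \setminus V_{l+1})$. Over $Y \setminus V_{d+1}$ the fibres of $g$ have dimension exactly $d$, so every component of $g^{-1}(W \setminus V_{d+1})$ has dimension $\dim W + d = \dim \Gamma - p$. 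For $l > d$, the proper-intersection hypothesis gives $\dim (W \cap V_l) \leq \dim W + \dim V_l - \dim Y$ on each component, and fibres over $V_l \setminus V_{l+1}$ have dimension exactly $l$, so $\dim g^{-1}(W \cap (V_l \setminus V_{l+1})) \leq \dim W + \dim V_l - \dim Y + l \leq \dim W + d = \dim \Gamma - p$, the last step using the intrinsic bound. Combined with the lower bound $\dim Z \geq \dim \Gamma - p$ for every irreducible component $Z$ of $g^{-1}(W)$ (which holds because $g$ is dominant and $\Gamma$ is smooth), we conclude that $g^{-1}(W)$ has pure codimension $p$, so $[g^{-1}(W)]$ is a well-defined effective cycle.

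For the identification with $g^*[W]$, apply Method 1 of the preceding subsection to the regular morphism $g: \Gamma \to Y$, which gives $g^*[W] = (p_\Gamma)_*\bigl([\Gamma_g] \cdot p_Y^*[W]\bigr)$ in $A^p(\Gamma)$, where $\Gamma_g \subset \Gamma \times Y$ is the graph and $p_\Gamma, p_Y$ are the two projections. The previous step shows $\Gamma_g \cap (\Gamma \times W)$ has the expected codimension in $\Gamma \times Y$, so the intersection is proper and the result is supported on $g^{-1}(W)$ viewed inside $\Gamma_g \cong \Gamma$. Because we work in characteristic zero, generic smoothness of $g$ over a dense open $U \subset Y \setminus \mathcal{C}_g$ (the proper-intersection hypothesis rules out $W \subset \mathcal{C}_g$, which would force $W \cap V_l = W$ in violation of proper intersection) ensures the intersection is generically reduced along each component of $g^{-1}(W)$, yielding intersection multiplicity one. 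Pushing forward by $p_\Gamma$ identifies $g^*[W]$ with $[g^{-1}(W)]$.

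The main obstacle is the dimension accounting, which neatly balances the proper-intersection hypothesis on $W \cap V_l$ against the intrinsic bound $\dim V_l + l \leq \dim \Gamma$ in order to rule out excess-dimension components over the critical locus; the identification with $g^*[W]$ is then a routine application of intersection theory on the graph, with characteristic zero entering only to force intersection multiplicities equal to one via generic smoothness.
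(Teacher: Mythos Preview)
Your dimension argument is essentially the paper's: both stratify $W$ along the fibre-dimension loci $V_l$ and balance the proper-intersection hypothesis against the bound $\dim V_l + l \le \dim\Gamma$. The paper squeezes out one more unit---using that each component $V$ of $V_l$ is a proper subvariety of $Y$, so $g^{-1}(V)$ is a proper subvariety of $\Gamma$ and hence $\dim V + l \le \dim\Gamma - 1$---and thereby obtains $\dim g^{-1}(W\cap\mathcal{C}_g)\le \dim W + d - 1$, strictly below the expected dimension. Your non-strict bound already suffices for pure codimension, though the strict version is what guarantees that no component of $g^{-1}(W)$ lies entirely over $\mathcal{C}_g$.

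The genuine gap is in your multiplicity-one step. Generic smoothness furnishes a dense open $U\subset Y$ over which $g$ is smooth, but $U$ can be strictly smaller than $Y\setminus\mathcal{C}_g$: the locus $\mathcal{C}_g$ records only where the fibre \emph{dimension} jumps, not where $g$ ramifies, and nothing in the hypothesis prevents $W$ (or the image of an entire component of $g^{-1}(W)$) from sitting inside the branch locus. Concretely, for $g:\mathbb{P}^1\to\mathbb{P}^1$ given by $[y_0:y_1]\mapsto[y_0^2:y_1^2]$ one has $d=0$ and every fibre finite, so $\mathcal{C}_g=\emptyset$ and the hypothesis on $W$ is vacuous; yet for $W=\{[1:0]\}$ the pullback is $g^*[W]=2\cdot[\mathrm{pt}]$, not $[\mathrm{pt}]$. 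The correct reading of the lemma---consistent with the paper's citation of Fulton \S8.2 and Example~11.4.8---is that $[g^{-1}(W)]$ denotes the fundamental cycle of the \emph{scheme-theoretic} preimage, carrying its length multiplicities. Once pure codimension is established, the equality $g^*[W]=[g^{-1}(W)]$ in this sense follows directly from the proper-intersection case of the refined Gysin map, with no appeal to generic smoothness. The paper's downstream applications need only effectivity and the correct dimension, so the multiplicities are harmless there; but your argument for multiplicity one does not go through.
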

\begin{proof} (See also Example 11.4.8 in \cite{fulton}.) By the intersection theory (see Section 8.2 in \cite{fulton} and Theorem 3.4 in \cite{friedlander-lawson}), it suffices to show that $g^{-1}(W)$ has the correct dimension $\dim (X)-\dim (Y)+\dim (W)$. First, if $y\in W-\mathcal{C}_g$ then $\dim (g^{-1}(y))=\dim (X)-\dim (Y)$ by definition of $\mathcal{C}_g$. Hence $\dim (g^{-1}(W-\mathcal{C}_g)=\dim (W)+\dim (X)-\dim (Y)$. It remains to show that $g^{-1}(W\cap \mathcal{C}_g)$ has dimension $\leq \dim (X)+\dim (W)-\dim (Y)-1$. Let $Z$ be an irreducible component of $W\cap \mathcal{C}_g$. We define $l=\inf \{\dim (g^{-1}(y)):~y\in Z\}$. Then $l>\dim (X)-\dim (Y)$ and for generic $y\in Z$ we have $\dim (g^{-1}(y))=l$ (see Theorem 7 in Section 6.3 in Chapter 1 in \cite{shafarevich}). Let $V\subset V_l$ be an irreducible component containing $Z$.  By assumption $V.W$ has dimension $\dim (V)+\dim (W)-\dim (Y)$, hence $\dim (Z)\leq \dim (V)+\dim (W)-\dim (Y)$. We obtain $$\dim (g^{-1}(Z-V_{l+1}))=l+\dim (Z)\leq l+\dim (V)+\dim (W)-\dim (Y).$$ Since $g$ is surjective (because $f$ is dominant) and $V\not= Y$, it follows that $$\dim (X)-1\geq \dim (g^{-1}(V))\geq \dim (V)+l.$$ From these last two estimates we obtain 
\begin{eqnarray*}
\dim (g^{-1}(Z-V_{l+1}))&=&l+\dim (V)+\dim (W)-\dim (Y)\\
&\leq& \dim (X)-1+\dim (W)-\dim (Y).
\end{eqnarray*}
Since there are only a finite number of such components, it follows that $\dim (g^{-1}(W\cap \mathcal{C}_g))\leq \dim (W)+\dim (X)-\dim (Y)-1$, as claimed. 
\end{proof} 
 
We next estimate the degree of the pullback of a cycle. Fix an embedding $Y\subset \mathbb{P}^N_K$, and let $\iota :Y\subset \mathbb{P}^N_K$ the inclusion. Let $H\subset \mathbb{P}^N_K$ be a generic hyperplane and let $\omega _Y=H|_{Y}$. 
 
\begin{lemma} 
a) Let $p=0,\ldots ,\dim (Y)$, and let $Z\subset X$ be a proper subvariety. Then there is a linear subspace $H^p\subset \mathbb{P}^N_K$ of codimension $p$ such that $H^p$ intersects $Y$ properly, $f^*(\iota ^*(H^p))$ is well-defined as a subvariety of $X$, and $f^*(\iota ^*(H^p))$ has no component on $Z$. In particular, for any non-negative integer $p$, the pullback $f^*(\omega _Y^p)\in A^{p}(X)$ is effective. 

b) Let $W$ be an irreducible of codimension $p$ in $Y$. Then in $A^p(X)$, we can represent $f^*(W)$ by $\beta _1-\beta _2$, where $\beta _1$ and $\beta _2$ are effective and $\beta _1,\beta _2\leq C\deg (W)f^*(\omega _Y^p)$ for some constant $C>0$ independent of the variety $W$, the manifold $X$ and the map $f$.
\label{LemmaDegreeOfPullback}\end{lemma}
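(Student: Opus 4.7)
For part (a), the plan is to use a Bertini-style genericity argument on the Grassmannian of codimension-$p$ linear subspaces of $\mathbb{P}^N_K$. For a generic such $H^p$, three conditions hold simultaneously: $\iota^*(H^p)=H^p\cap Y$ has codimension $p$ in $Y$; it meets each component of every critical subvariety $V_l$ of $g\colon\Gamma\to Y$ properly, so by Lemma~\ref{LemmaGoodPullbackVariety} the cycle $g^*(\iota^*(H^p))=[g^{-1}(\iota^*(H^p))]$ is well-defined and effective on $\Gamma$; and no irreducible component of $p_*([g^{-1}(\iota^*(H^p))])$ is contained in the fixed proper subvariety $Z\subsetneq X$. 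The last condition is open on the Grassmannian because the components vary with $H^p$ and cannot all remain inside a fixed proper $Z$. Then $f^*(\iota^*(H^p))=p_*([g^{-1}(\iota^*(H^p))])$ is an effective cycle on $X$ with no component in $Z$, and since $\omega_Y^p=[\iota^*(H^p)]$ in $A^p(Y)$, the ``in particular'' effectivity of $f^*(\omega_Y^p)$ follows.

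For part (b), I would apply Chow's moving lemma on $Y$ to the irreducible $W$, with the components of the critical loci $\{V_l\}$ of $g$ as auxiliary subvarieties. This produces linear subspaces $L_1,\ldots,L_e\subset\mathbb{P}^N_K$ (with $e\leq l:=\dim Y$), an automorphism $g_0\in\mathrm{Aut}(\mathbb{P}^N_K)$, and a rational equivalence
\[
W\;\sim\;\widetilde{W}\;:=\;(-1)^eW_e+\sum_{i=1}^e(-1)^{i-1}(g_0C_{L_i}(W_{i-1}))\cdot Y,
\]
where $W_0=W$, each $W_i=C_{L_i}(W_{i-1})\cdot Y-W_{i-1}$ is effective, and every component of every summand meets each $V_l$ properly. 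Grouping positive and negative summands produces $\widetilde{W}=\alpha_1-\alpha_2$ with $\alpha_1,\alpha_2$ effective on $Y$, and Lemma~\ref{LemmaGoodPullbackVariety} makes $\beta_j:=f^*(\alpha_j)=p_*([g^{-1}(\alpha_j)])$ effective on $X$ with $f^*(W)=\beta_1-\beta_2$.

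For the bound, compute classes in $A^p(Y)$. Each cone restriction $(g_0C_{L_i}(W_{i-1}))\cdot Y$ has class $\deg(W_{i-1})[\omega_Y^p]$, because $A^p(\mathbb{P}^N_K)=\mathbb{Z}[H^p]$ and $\deg(C_L(V))=\deg(V)$. The recursion $\deg(W_i)+\deg(W_{i-1})=\deg(Y)\deg(W_{i-1})$ gives $\deg(W_{i-1})\leq(\deg Y)^l\deg(W)$, so with $M:=l(\deg Y)^l$ the sums $c_\pm:=\sum_{i\text{ odd (resp.\ even)}}\deg(W_{i-1})$ satisfy $c_\pm\leq M\deg(W)$. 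Taking classes in the moving identity yields $(-1)^e[W_e]=[W]-(c_--c_+)[\omega_Y^p]$, from which one computes $[\alpha_1]=[W]+c_+[\omega_Y^p]$ and $[\alpha_2]=c_+[\omega_Y^p]$ when $e$ is even (with the roles interchanged when $e$ is odd). The key absorption step invokes Roberts' cone-degree estimate (Lemma~2 of~\cite{roberts}): for generic $L$, the effective cycle $C_L(W)\cdot Y-W$ represents $\deg(W)[\omega_Y^p]-[W]$, so this difference is an effective class on $Y$. Setting $C:=M+1$ then makes $C\deg(W)[\omega_Y^p]-[\alpha_j]$ an effective class on $Y$ in every case, and one last application of Lemma~\ref{LemmaGoodPullbackVariety} (to an effective representative moved to meet critical loci properly) gives that $C\deg(W)f^*(\omega_Y^p)-\beta_j$ is effective in $A^p(X)$, with $C$ depending only on $l$ and $\deg Y$.

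The main delicate point I anticipate is controlling the residual term $W_e$: its class in $A^p(Y)$ carries a stray $\pm[W]$ summand not proportional to $[\omega_Y^p]$, and it is precisely Roberts' cone-dominance of $W$ by $\deg(W)\omega_Y^p$ that absorbs this summand into the bound, at the cost of only one extra unit in the universal constant $C$.
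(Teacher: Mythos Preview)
Your approach to part (a) follows the paper's. The paper is slightly more explicit about why no component lands in $Z$: one also requires $H^p$ to meet $g(p^{-1}(Z))$ and each $g(p^{-1}(Z))\cap V_l$ properly, after which the dimension count behind Lemma~\ref{LemmaGoodPullbackVariety} shows $g^{-1}(\iota^*(H^p))\cap p^{-1}(Z)$ has strictly smaller dimension than $g^{-1}(\iota^*(H^p))$. Your ``open on the Grassmannian'' remark is the right intuition but should be backed by this computation.

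For part (b) there is a genuine gap in your last step. You correctly show that the \emph{class} $C\deg(W)[\omega_Y^p]-[\alpha_j]$ is effective in $A^p(Y)$, via the cone $C_L(W)$ over $W$ itself. But to deduce that $C\deg(W)f^*(\omega_Y^p)-\beta_j$ is effective in $A^p(X)$, you need an effective \emph{cycle} on $Y$ in that class which also meets every $V_l$ properly, so that Lemma~\ref{LemmaGoodPullbackVariety} makes its $g$-pullback effective at the cycle level. Your phrase ``an effective representative moved to meet critical loci properly'' hides the problem: Chow's moving lemma does not preserve effectivity, and your explicit representative $C_L(W)\cdot Y-W$ need not meet the $V_l$ properly when the excess $e(W)\geq 2$ (Roberts' main lemma only drops the excess by one). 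Since the Gysin pullback $g^*$ on Chow groups is not known to send effective classes to effective classes for an arbitrary surjective morphism, effectivity on $Y$ alone does not suffice.

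The paper closes this gap by taking the cone over $W_e$ rather than over $W$. Because $W_e$ already has excess $0$ relative to the $V_l$, Roberts' main lemma gives $e(C_L(W_e)\cdot Y-W_e)=0$ for generic $L$; thus $\iota^*(\alpha)-W_e$, with $\alpha=C_L(W_e)$, is an effective cycle meeting every $V_l$ properly. Lemma~\ref{LemmaGoodPullbackVariety} then yields $0\leq g^*(W_e)\leq g^*(\iota^*(\alpha))$ at the cycle level, and $g^*(\iota^*(\alpha))\sim\deg(W_e)\,g^*(\omega_Y^p)$. Together with the easy bound on the cone summands (each already of class a multiple of $\omega_Y^p$ and meeting the $V_l$ properly), this gives $\beta_j\leq C\deg(W)f^*(\omega_Y^p)$ directly. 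In short: apply the cone trick to $W_e$, not to $W$.
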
 
\begin{proof}

Since by definition $f^*(W)=p_*g^*(W)$ and since $p_*$ preserves effective classes, it suffices to prove the lemma for the morphism $g$. We let the varieties $V_l$ as those defined before Lemma \ref{LemmaGoodPullbackVariety}.  

a) Let $H^p\subset \mathbb{P}^N_K$ be a generic codimension $p$ linear subspace. Then in $A^p(Y)$, $\omega _Y^p$ is represented by $\iota ^*(H^p)$. We can choose such an $H^p$ so that $H^p$ intersects properly $Y$, $g(Z)$ and all irreducible components of $V_l$ and $g(Z)\cap V_l$ for all $l>\dim (X)-\dim (Y)$. By Lemma \ref{LemmaGoodPullbackVariety}, the pullback $g^*(\iota ^*(H^p))=g^{-1}(\iota ^*(H^p))$ is well-defined as a subvariety of $\Gamma$. Moreover, the dimension of $g^{-1}(\iota ^*(H^p))\cap Z$ is less than the dimension of $g^{-1}(\iota ^*(H^p))$. In particular, $g^*(\iota ^*(H^p))$ is effective and has no component on $Z$. 

b) By Chow's moving lemma, $W$ is rationally equivalent to $\iota^*(\alpha _1)-\iota^*(\alpha _2)\pm W_e$, where $\alpha _1,\alpha _2\subset \mathbb{P}^N_K$  and $W_e\subset Y$ are subvarieties of codimension $p$, and they intersect properly $Y$ and all irreducible components of $V_l$ for all $l>\dim (X)-\dim (Y)$. Moreover, $\deg (\alpha _1),\deg (\alpha _2),\deg (W_e)\leq C\deg (W)$, for some $C>0$ independent of $W$. By the proof of Chow's moving lemma, we can find a codimension $p$ variety $\alpha\subset \mathbb{P}^N_K$ so that $\alpha$ intersect properly with $Y$ and all $V_l$, $\iota ^*(\alpha ) -W_e$ is effective, and $\deg (\alpha )\leq C\deg (W_e)$. Note that in $A^p(Y)$ we have $\iota ^*(\alpha _1)\sim \deg (\alpha _1)\omega _Y^p$, $\iota ^*(\alpha _2)\sim \deg (\alpha _2)\omega _Y^p$ and $\iota ^*(\alpha )\sim \deg (\alpha )\omega _Y^p$. Note also that $0\leq g^*(W_e)\leq g^*(\iota ^*(\alpha ))$. Therefore, in $A^p(\Gamma )$
\begin{eqnarray*}
g^*(W)\sim \deg (\alpha _1)g^*(\omega _Y^p)-\deg (\alpha _2)g^*(\omega _Y^p)\pm g^*(W_e),    
\end{eqnarray*}
where each of the three terms on the RHS is effective and $\leq C\deg (W)g^*(\omega _Y ^p)$ for some $C>0$ independent of $W$, $X$ and $f$.
\end{proof}
\begin{lemma}
Let $f:X\rightarrow Y$ be a rational map. For any $p=0,\ldots ,\dim (Y)-1$, we have
$$
f^{*}(\omega _Y^{p+1})\leq f^*(\omega _Y^p).f^*(\omega _Y)
$$
in $A^{p+1}(X)$.
\label{LemmaBoundForPullbackKahlerForms}\end{lemma}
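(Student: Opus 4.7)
The strategy is to work on a common resolution $\Gamma$ of the graph $\Gamma_f$, where $f$ factors as $g \circ p^{-1}$ with $p : \Gamma \to X$ birational and $g : \Gamma \to Y$ a morphism, so that $f^*(\alpha) = p_*(g^*(\alpha))$ for every class $\alpha$ on $Y$. Since $g$ is a morphism, $g^*$ is a ring homomorphism, and so $g^*(\omega_Y^{p+1}) = g^*(\omega_Y^p) \cdot g^*(\omega_Y)$ exactly. The failure of the desired identity in $A^{p+1}(X)$ therefore comes solely from $p_*$ not being multiplicative, and the plan is to compute this defect via the projection formula and identify it with an effective class.

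First I would choose a concrete effective representative of $g^*(\omega_Y)$: let $H\subset \mathbb{P}^N_K$ be a generic hyperplane and set $D = g^{-1}(\iota^* H)$. By Lemma \ref{LemmaGoodPullbackVariety} and genericity, $D$ is an effective divisor on $\Gamma$ representing $g^*(\omega_Y)$, and for a generic choice $H$ no component of $D$ is contained in the $p$-exceptional locus. Because $p : \Gamma \to X$ is a proper birational morphism between smooth varieties and $D$ has no $p$-exceptional components, a standard argument shows
\[
p^*p_* D \;=\; D + E,
\]
where $E$ is an effective divisor supported on the exceptional locus of $p$. In terms of classes, this reads $p^*f^*(\omega_Y) = g^*(\omega_Y) + [E]$ in $A^1(\Gamma)$, with $[E]$ effective.

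Next I would apply the projection formula:
\begin{eqnarray*}
f^*(\omega_Y^p)\cdot f^*(\omega_Y) &=& p_*\bigl(g^*(\omega_Y)^p\bigr)\cdot p_*(D)\\
&=& p_*\bigl(g^*(\omega_Y)^p \cdot p^*p_*(D)\bigr)\\
&=& p_*\bigl(g^*(\omega_Y)^p \cdot (D+E)\bigr)\\
&=& p_*\bigl(g^*(\omega_Y)^{p+1}\bigr) + p_*\bigl(g^*(\omega_Y)^p \cdot [E]\bigr)\\
&=& f^*(\omega_Y^{p+1}) + p_*\bigl(g^*(\omega_Y)^p \cdot [E]\bigr).
\end{eqnarray*}
The theorem will follow once I verify that the correction term $p_*(g^*(\omega_Y)^p \cdot [E])$ is effective in $A^{p+1}(X)$.

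To verify effectivity, I would represent $g^*(\omega_Y)^p = g^*(\omega_Y^p)$ by an effective cycle using part a) of Lemma \ref{LemmaDegreeOfPullback} (applied to the morphism $g$): for a generic codimension $p$ linear subspace $H^p \subset \mathbb{P}^N_K$, the cycle $g^{-1}(\iota^*(H^p))$ is an effective representative of $g^*(\omega_Y^p)$ that meets every component of $E$ properly, and so the intersection product $g^{-1}(\iota^*(H^p)) \cdot E$ is a well-defined effective cycle. Since $p_*$ preserves effectivity, the correction term is effective, and the asserted inequality in $A^{p+1}(X)$ follows. The only subtle point, which is the main obstacle, is the careful genericity choice ensuring simultaneously that $D$ has no $p$-exceptional components (so that $p^*p_*D - D$ is effective) and that a representative of $g^*(\omega_Y^p)$ meets $E$ properly; both are achieved by a standard dimension count using Lemmas \ref{LemmaGoodPullbackVariety} and \ref{LemmaDegreeOfPullback}.
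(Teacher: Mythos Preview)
Your argument is correct, and it takes a genuinely different route from the paper's own proof.

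The paper proves the inequality at the level of cycles: it chooses linear subspaces $H$, $H^p$, $H^{p+1}=H\cap H^p$ in general position, pushes them down to $X$ via $f^*=p_*g^*$ as honest subvarieties, and then argues by an explicit dimension count (splitting $X$ into a ``good'' open set where $p$ is an isomorphism and $g$ has equidimensional fibers, and the complementary ``bad'' locus) that $f^*(H_Y)$ and $f^*(H_Y^p)$ intersect properly in $X$ and that $f^*(H_Y^{p+1})$ is a component of this intersection. The inequality in $A^{p+1}(X)$ is then read off from this cycle-level containment.

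Your approach instead stays on $\Gamma$ and exploits that $g^*$ is a ring homomorphism, so all the failure of multiplicativity is concentrated in $p_*$. Via the projection formula you identify the defect $f^*(\omega_Y^p)\cdot f^*(\omega_Y)-f^*(\omega_Y^{p+1})$ with $p_*\bigl(g^*(\omega_Y^p)\cdot[E]\bigr)$, where $E=p^*p_*D-D$ is the effective $p$-exceptional correction for a generic hyperplane pullback $D$. This is cleaner and more conceptual: the source of the inequality is made completely transparent (it is literally the exceptional contribution to $p^*p_*$), and you avoid the somewhat delicate dimension bookkeeping the paper carries out on the bad locus. The paper's approach, on the other hand, yields the stronger statement that for suitably generic representatives the inequality already holds at the level of cycles (before passing to rational equivalence), which can be useful when one needs to track actual varieties rather than classes.

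One small point worth making explicit in your write-up: the effectivity of $E=p^*p_*D-D$ for an effective divisor $D$ with no $p$-exceptional components is indeed standard, but since the paper does not state it anywhere, a one-line justification (computing $\mathrm{ord}_{E_i}(p^*p_*D)\ge 0$ for each exceptional prime $E_i$, using that $p_*D$ is effective) would make the argument self-contained.
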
  
\begin{proof}
Let $Z\subset X$ be a proper subvariety containing $p(g^{-1}(\mathcal{C}_g))$ so that $p:\Gamma -p^{-1}(Z)\rightarrow X-Z$ is an isomorphism. Then the restriction map $$p_0:\Gamma -g^{-1}(gp^{-1}(Z))\rightarrow X-p(g^{-1}(gp^{-1}(Z)))$$ is also an isomorphism, and the restriction map $$g_0:\Gamma -g^{-1}(gp^{-1}(Z))\rightarrow Y- gp^{-1}(Z)$$ has fibers of the correct dimension $\dim (X)-\dim (Y)$. 

Choose $H,H^p,H^{p+1}\subset \mathbb{P}^N_K$ be linear subspaces of codimension $1$, $p$ and $p+1$ such that $H^{p+1}=H\cap H^p$. We can find an automorphism $\tau \in \mathbb{P}^N_K$, so that $\tau (H),\tau (H^p), \tau (H^{p+1})$ intersects properly $Y$ and all irreducible components of $gp^{-1}(Z)$ and of  $V_l$ for all $l>\dim (X)-\dim (Y)$. For convenience, we write $H,H^p$ and $H^{p+1}$ for $\tau (H),\tau (H^p), \tau (H^{p+1})$, and $H_Y,H_Y^p$ and $H_Y^{p+1}$ for their intersection with $Y$. Then all the varieties $g^{-1}(H_Y),g^{-1}(H_Y^p)$ and $g^{-1}(H_Y^{p+1})$ have the correct dimensions, and have no components in $g^{-1}(gp^{-1}(Z))$. Hence the pullbacks $f^*(H_Y), f^*(H_Y^p)$ and $f^*(H_Y^{p+1})$ are well-defined as varieties in $X$ and has no components on $p(g^{-1}(gp^{-1}(Z)))$.

We next observe that the two varieties $f^*(H_Y)$ and $f^*(H_Y^p)$ intersect properly. Since $f^*(H_Y)$ is a hypersurface, it suffices to show that any component of $f^*(H_Y)\cap f^*(H_Y^p)$ has codimension $p+1$. Since $f^*(H_Y^p)$ has no component on $p(g^{-1}(gp^{-1}(Z)))$, the codimension of $f^*(H_Y)\cap f^*(H_Y^p)\cap p(g^{-1}(gp^{-1}(Z)))$ is at least $p+1$. It remains to show that $f^*(H_Y)\cap f^*(H_Y^p)\cap (X-p(g^{-1}(gp^{-1}(Z))))$ has codimension $p+1$. Since $p_0$ is an isomorphism, the codimension of the latter equals that of 
$$g^{-1}(H_Y)\cap g^{-1}(H_Y^p)\cap (\Gamma - g^{-1}(gp^{-1}(Z)))=g^{-1}(H_Y\cap H_Y^{p})\cap (\Gamma - g^{-1}(gp^{-1}(Z)))$$ 
which is $p+1$.

Therefore $f^*(H_Y).f^*(H_Y^{p+1})$ is well-defined as a variety of $X$, and on $X-p(g^{-1}(gp^{-1}(Z)))$ it equals
\begin{eqnarray*}
(p_0)*(g_0^*(H_Y).g_0^*(H_Y^p))=(p_0)_*(g_0^*(H_Y^{p+1}))=p_*g^*(H_Y^{p+1}).
\end{eqnarray*}
Since the latter has no component on $p(g^{-1}(gp^{-1}(Z)))$, it follows that $f^*(H_Y).f^*(H_Y^p)\geq f^*(H_Y^{p+1})$. From this inequality, we obtain the desired inequality in $A^{p+1}(X)$
\begin{eqnarray*}
f^*(\omega _Y^p).f^*(\omega _Y)\geq f^*(\omega _Y^{p+1}).
\end{eqnarray*} 
\end{proof}

Finally, we estimate the degree of a strict transform of a cycle. Define $$g_0=g|_{\Gamma -g^{-1}(\mathcal{C}_g)}: \Gamma -g^{-1}(\mathcal{C}_g)\rightarrow Y-\mathcal{C}_g.$$ Then $g_0$ is a proper morphism, and for any $y\in Y-\mathcal{C}_g$, $g_0^{-1}(y)$ has the correct dimension $\dim (X)-\dim (Y)$. Let $W\subset Y$ be a codimension $p$ subvariety. The inverse image $g_0^{-1}(W)=g^{-1}(W)\cap (\Gamma -g^{-1}(\mathcal{C}_g))\subset \Gamma -g^{-1}(\mathcal{C}_g)$ is a closed subvariety of codimension $p$ of $\Gamma -g^{-1}(\mathcal{C}_g)$, hence its closure $cl(g_0^{-1}(W))\subset \Gamma$ is a subvariety of codimension $p$, and we define $f^{o}(W)=p_*cl(g_0^{-1}(W))$. Note that a strict transform depends on the choice of a resolution of singularities $\Gamma$ of the graph $\Gamma _f$. (We can also define a strict transform more intrinsically using the graph $\Gamma _f$ directly, as in \cite{dinh-sibony3}.)

\begin{lemma}
Let $W\subset Y$ be a codimension $p$ subvariety. Then $f^o(W)$ is an effective cycle, and in $A^p(X)$
\begin{eqnarray*} 
f^{o}(W)\leq C\deg (W)f^*(\omega _Y^p),
\end{eqnarray*}
where $C>0$ is a constant independent of the the variety $W$, the manifold $X$  and the map $f$. 
\label{LemmaDegreeOfStrictTransform}\end{lemma}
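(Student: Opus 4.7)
The effectivity of $f^o(W)$ is immediate from the definition, since $cl(g_0^{-1}(W))$ is a subvariety of $\Gamma$ (thus an effective cycle with multiplicity one) and the pushforward $p_*$ preserves effectivity. For the class-level inequality in $A^p(X)$, my plan is first to dominate the strict transform by the total pullback, and then to invoke Lemma \ref{LemmaDegreeOfPullback}(b) to bound the total pullback in terms of $\deg(W)f^*(\omega_Y^p)$.

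The key intermediate claim is
$$
f^o(W) \leq f^*(W) \qquad \text{in } A^p(X). \qquad (*)
$$
Granting $(*)$, Lemma \ref{LemmaDegreeOfPullback}(b) furnishes effective cycles $\beta_1,\beta_2$ on $X$ with $f^*(W) = \beta_1-\beta_2$ in $A^p(X)$ and $\beta_1,\beta_2 \leq C\deg(W)f^*(\omega_Y^p)$. Since $\beta_2$ is effective, $f^*(W) \leq \beta_1 \leq C\deg(W)f^*(\omega_Y^p)$, and combining with $(*)$ completes the proof. To prove $(*)$ itself I would reduce to showing $cl(g_0^{-1}(W)) \leq g^*(W)$ in $A^p(\Gamma)$ and then apply $p_*$. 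The case $W \subset \mathcal{C}_g$ is trivial since $cl(g_0^{-1}(W))=0$; when $W \not\subset \mathcal{C}_g$, one constructs a positive cycle representative of the Gysin pullback $g^*(W)$ by factoring $g$ through its graph $\Gamma \hookrightarrow \Gamma \times Y$ and invoking the refined Gysin homomorphism for the associated regular embedding (Fulton, Chapter 6). The resulting representative is supported on $g^{-1}(W)$ and assigns multiplicity at least one to every codimension-$p$ irreducible component of $g^{-1}(W)$; since $cl(g_0^{-1}(W))$ is by definition the sum, with multiplicity one, of those codimension-$p$ components of $g^{-1}(W)$ that are not contained in $g^{-1}(\mathcal{C}_g)$, it is dominated at the class level.

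The main obstacle is precisely this positivity statement for the Gysin pullback, that is, exhibiting an effective representative of $g^*(W)$ with multiplicity at least one on each codimension-$p$ component of $g^{-1}(W)$. A more hands-on alternative bypasses the Gysin formalism: choose a generic linear subspace $L \subset \mathbb{P}^N_K$ of dimension $N-\dim Y-1$ disjoint from $Y$, and set $\alpha := C_L(W)\cdot Y$. By the cited Lemma 2 of \cite{roberts}, $\alpha \geq W$ as effective cycles on $Y$; and since $[C_L(W)]=\deg(W)\,[H^p]$ in $A^p(\mathbb{P}^N)$, one has $\alpha \sim \deg(W)\,\omega_Y^p$ in $A^p(Y)$. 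The strict transform of $W$ in $\Gamma$ is then dominated (at the class level) by the strict transform of $\alpha$, which in turn is dominated by $g^*(\alpha) = \deg(W)\,g^*(\omega_Y^p)$ in $A^p(\Gamma)$; applying $p_*$ yields $f^o(W) \leq \deg(W)f^*(\omega_Y^p)$, i.e.\ with the constant $C=1$.
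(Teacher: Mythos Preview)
Your second ``hands-on alternative'' does not actually bypass the obstacle you identified in the first approach; it merely relocates it. The step ``the strict transform of $\alpha$, which in turn is dominated by $g^*(\alpha)$'' is precisely the same positivity assertion $g^o(\alpha)\leq g^*(\alpha)$ in $A^p(\Gamma)$, now applied to $\alpha=\iota^*(C_L(W))$ instead of to $W$. But $\alpha$ contains $W$ as a component, so if $W$ fails to meet the critical strata $V_l$ properly then so does $\alpha$, and $g^*(\alpha)$ is only defined at the class level. The inequality $g^o(\alpha)\leq g^*(\alpha)$ is therefore not an inequality of cycles but a claim that a certain Chow class is effective --- exactly the Gysin-positivity issue you flagged. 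As you suspected, excess components of $g^{-1}(\alpha)$ contribute to $g^*(\alpha)$ through Chern classes of the excess bundle (Fulton, Proposition~6.3), and these are not effective in general.

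What the paper does to fill this gap is a deformation argument. For any $S\subset\mathbb{P}^N$ of codimension $p$ meeting $Y$ properly, one chooses a curve $\tau(t)\in\mathrm{Aut}(\mathbb{P}^N)$ with $\tau(0)=\mathrm{id}$ such that for generic $t$ the translate $\tau(t)S$ meets all the $V_l$ properly; this produces a family $\mathcal{S}\subset Y\times\mathbb{P}^1$ with $\mathcal{S}_0=\iota^*(S)$ and $g^*(\mathcal{S}_t)$ well-defined as a variety for generic $t$. One then takes the strict transform $G^o(\mathcal{S})\subset\Gamma\times\mathbb{P}^1$ of the whole family and shows its fibre at $t=0$ dominates $g^o(\iota^*(S))$, while by rational equivalence of fibres it agrees in $A^p(\Gamma)$ with $g^*(\mathcal{S}_t)=\deg(S)\,g^*(\omega_Y^p)$ for generic $t$. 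This semicontinuity step is the heart of the proof, and it is what your proposal is missing. Once it is in place, your single-cone reduction with $S=C_L(W)$ is indeed enough (and gives $C=1$); the paper's iterated Chow decomposition into several cone terms plus a $W_e$ in good position is one way to organise the same idea, but the essential content in either route is the pencil argument establishing $g^o(\iota^*(S))\leq\deg(S)\,g^*(\omega_Y^p)$.
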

\begin{proof}
That $f^0(W)$ is an effective cycle follows from the definition. It suffices to prove the lemma for the morphism $g:\Gamma \rightarrow Y$. By the proof of Chow's moving lemma, we can decompose $W$ as follows
\begin{eqnarray*}
W=\sum _{i=1}^e(-1)^{i-1}\iota ^*(C_{L_i}(W_{i-1}))+(-1)^eW_e,
\end{eqnarray*} 
where the variety $W_e$ intersects properly all irreducible components of $V_l$ for all $l>\dim (X)-\dim (Y)$, and $C_{i}(W_{i-1})\subset \mathbb{P}^N_K$ are subvarieties of codimension $p$ intersecting $Y$ properly (but may not intersect properly the irreducible components of $V_l$). Moreover, we have the following bound on the degrees
\begin{equation}
\deg (W_e),\deg (C_{L_i}(W_{i-1}))\leq C\deg (W),
\label{Equation0}\end{equation}
for all $i$, where $C>0$ is independent of $W$, $X$ and $f$. 

By the definition of $g^0$ we have
\begin{equation}
g^o(W)= \sum _{i=1}^e(-1)^{i-1}g^o(\iota ^*(C_{L_i}(W_{i-1})))+(-1)^eg^o(W_e).
\label{Equation1}\end{equation}

Note that $e\leq \dim (Y)$. We now estimate each term on the RHS of (\ref{Equation1}). Let $S\subset \mathbb{P}^N_K$ be a subvariety of codimension $p$ intersecting $Y$ properly (but may not intersecting properly the components of $V_l$). We first show that for any such $S$
\begin{equation}
g^o(\iota ^*(S))\leq \deg (S)g^*(\omega _Y^p),
\label{Equation2}\end{equation}
in $A^p(\Gamma )$. 

We can find a curve of automorphisms $\tau (t)\in Aut(\mathbb{P}^N_K)$ for $t\in \mathbb{P}^1_K$ such that for a dense Zariski open dense subset  $U\subset \mathbb{P}^1$, $\tau (t)S$ intersects properly $Y$ and all the irreducible components of $V_l$ (for $l>\dim (X)-\dim (Y)$)  for all $t\in U$. Let $\mathcal{S}\subset Y\times \mathbb{P}^1$ be the corresponding variety, hence for $t\in U\subset \mathbb{P}^1$, $\mathcal{S}_t=\iota ^*(\tau (t)S)\subset Y$. Since $S$ intersects $Y$ properly, we have $\mathcal{S}_0=\iota ^*(S)$. By the choice of $\mathcal{S}$, for any $t\in U$ the pullback $g^*(\mathcal{S}_t)$ is well-defined as a subvariety of $\Gamma$. 

We consider the induced map $G:\Gamma \times \mathbb{P}^1\rightarrow Y\times \mathbb{P}^1$ given by the formula 
\begin{eqnarray*}
G(z,t)=(g(z),t).
\end{eqnarray*}
We define by $G_0$ the restriction map $G_0:\Gamma \times U\rightarrow Y\times U$. By the choice of the variety $\mathcal{S}$, the inverse image $$G_0^{-1}(\mathcal{S})=G^{-1}(\mathcal{S})\cap  (\Gamma \times U)\subset \Gamma \times U$$ is a closed subvariety of codimension $p$, hence its closure $G^o(\mathcal{S})\subset \Gamma \times \mathbb{P}^1$ is a subvariety of codimension $p$. Moreover, for all $t\in U$ we have
\begin{eqnarray*}
G^o(\mathcal{S})_t=g^*(\mathcal{S}_t).
\end{eqnarray*}
Since the map $g_0:\Gamma -g^{-1}(\mathcal{C}_g)\rightarrow Y-\mathcal{C}_g$ has all fibers of the correct dimension $\dim (X)-\dim (Y)$, it follows that $$G^o(\mathcal{S})_0\cap (\Gamma -g^{-1}(\mathcal{C}_g))=g_0^{-1}(\iota ^*(S)).$$ 
In fact, let $G_1$ be the restriction of $G$ to $(\Gamma -\mathcal{C}_g)\times \mathbb{P}^1$. Then $$G_1^{-1}(\mathcal{S} )=G^{-1}(\mathcal{S})\cap [(\Gamma -\mathcal{C}_g)\times \mathbb{P}^1]\subset (\Gamma -\mathcal{C}_g)\times \mathbb{P}^1$$
is a closed subvariety of codimension $p$. Hence its closure, denoted by $\widetilde{G}^o(\mathcal{S})\subset \Gamma \times \mathbb{P}^1$ is a subvariety of codimension $p$. For $t\in U$, we have $\widetilde{G}^o(\mathcal{S})_t=g^*(\mathcal{S}_t)=G^o(\mathcal{S}_t)$, because on the one hand $\widetilde{G}^o(\mathcal{S})_t\subset G^{-1}(\mathcal{S})_t=g^*(\mathcal{S}_t)$, and on the other hand $g^*(\mathcal{S}_t)$ has no component on $g^{-1}(\mathcal{C}_g)$ and $\widetilde{G}^o(\mathcal{S})_t\cap (\Gamma -g^{-1}(\mathcal{C}_g))=g_0^{-1}(\mathcal{S}_t)$. Therefore $\widetilde{G}^o(\mathcal{S})=G^o(\mathcal{S})$ as varieties on $\Gamma \times \mathbb{P}^1$. In particular  
\begin{eqnarray*}
G^o(\mathcal{S})_0\cap (\Gamma -g^{-1}(\mathcal{C}_g))=\widetilde{G}^o(\mathcal{S})_0\cap (\Gamma -g^{-1}(\mathcal{C}_g))=g_0^{-1}(\iota ^*(S)),
\end{eqnarray*}
as claimed.

Hence 
\begin{eqnarray*}
g^o(\iota ^*(S))\leq G^o(\mathcal{S})_0
\end{eqnarray*}
as varieties on $\Gamma$. Since $G^o(\mathcal{S})_0$ is rationally equivalent to $G^o(\mathcal{S})_t$ for any $t$ in $U$, it follows that for all such $t$ we have
\begin{eqnarray*}
g^o(\iota ^*(S))\leq G^o(\mathcal{S})_t=g^*(\mathcal{S}_t)=\deg (S)g^*(\omega _Y^p),
\end{eqnarray*}
in $A^p(\Gamma )$. Hence (\ref{Equation2}) is proved. 

Now we continue the proof of the lemma. By (\ref{Equation2}) and the bound on degrees (\ref{Equation0}), for all $i=1,\ldots ,e$
\begin{eqnarray*}
g^o(\iota ^*(C_{L_i}(W_{i-1})))\leq C\deg (W)g^*(\omega _Y^p),
\end{eqnarray*}
in $A^p(\Gamma )$ where $C>0$ is independent of $W$, $X$ and $f$. 

It remains to estimate $g^o(W_e)$. By the choice of $W_e$, the pullback $g^*(W_e)$ is well-defined as a subvariety of $\Gamma$, hence by b) of Lemma \ref{LemmaDegreeOfPullback} and the bound on degrees (\ref{Equation0}) we have
\begin{eqnarray*} 
g^o(W_e)\leq g^*(W_e)\leq C\deg (W)g^*(\omega _Y^p),
\end{eqnarray*}
in $A^p(\Gamma )$, where $C>0$ is independent of $W$, $X$ and $f$. Thus the proof of the lemma is completed. 
\end{proof} 
  
\subsection{Dynamical degrees and some of their basic properties}
We define here dynamical degrees and prove some of their basic properties. When $K$ is the field of complex numbers, all of the results in this subsection were known.  Note that in this case (i.e. when $K=\mathbb{C}$), our approach here using Chow's moving lemma is different from the previous ones using "regularization of currents" (see \cite{russakovskii-shiffman} for the case $X=\mathbb{P}^N_{\mathbb{C}}$ the complex projective space and see \cite{dinh-sibony1}\cite{dinh-sibony10} for the case $X$ is a general compact K\"ahler manifold; see also \cite{guedj5}). Let $X$ be a projective manifold with a given embedding $\iota :X\subset \mathbb{P}^N_K$. We let $H\subset \mathbb{P}^N$ be a linear hyperplane, and let $\omega _X=H|_{X}$.

\begin{lemma}
Let $Y,Z$ be projective manifolds, and let $f:Y\rightarrow X,~g:Z\rightarrow Y$ be dominant rational maps. We fix an embedding $Y\subset \mathbb{P}^M_K$ and let $\omega _Y$ be the pullback to $Y$ of a generic hyperplane in $\mathbb{P}^M_K$. Then in $A^p(Z)$ 
\begin{eqnarray*} 
(f\circ g)^*(\omega _X^p) \leq C\deg (f^*(\omega _X^p))g^*(\omega _Y^p),
\end{eqnarray*}
where $C>0$ is independent of $f$ and $g$.
\label{LemmaDegreeOfCompositionMaps}\end{lemma}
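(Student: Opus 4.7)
The strategy is to reduce the estimate to an application of Lemma \ref{LemmaDegreeOfPullback}(b) for the map $g$. By Lemma \ref{LemmaDegreeOfPullback}(a), the class $f^*(\omega_X^p) \in A^p(Y)$ is represented by an effective cycle, which I will write as $\sum_i m_i [V_i]$ with $m_i > 0$ and $V_i \subset Y$ irreducible of codimension $p$. From the definition of degree relative to the embedding $Y \subset \mathbb{P}^M_K$, one has $\deg(f^*(\omega_X^p)) = \sum_i m_i \deg(V_i)$.

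Next, I would establish that $(f \circ g)^*(\omega_X^p) = g^*(f^*(\omega_X^p))$ in $A^p(Z)$ via a common resolution. Fix a resolution $\Gamma_g$ of the graph of $g$ with morphisms $p_g: \Gamma_g \to Z$ and $q_g: \Gamma_g \to Y$, and a resolution $\Gamma_f$ of the graph of $f$ with morphisms $p_f: \Gamma_f \to Y$ and $q_f: \Gamma_f \to X$. Take a resolution $\widetilde{\Gamma}$ of the main component of the fiber product $\Gamma_g \times_Y \Gamma_f$, with induced morphisms $\alpha: \widetilde{\Gamma} \to \Gamma_g$ and $\beta: \widetilde{\Gamma} \to \Gamma_f$. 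The pair $(p_g \circ \alpha,\, q_f \circ \beta)$ dominates a resolution of the graph of $f \circ g$, giving $(f \circ g)^*(\omega_X^p) = (p_g)_* \alpha_* \beta^* q_f^*(\omega_X^p)$. Using the base-change compatibility $\alpha_* \beta^* = q_g^* (p_f)_*$ for the Cartesian square over $Y$, this equals $g^*(f^*(\omega_X^p)) = \sum_i m_i\, g^*([V_i])$ in $A^p(Z)$.

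Finally, I apply Lemma \ref{LemmaDegreeOfPullback}(b) to each $V_i$: one writes $g^*([V_i]) = \beta_{1,i} - \beta_{2,i}$ with $\beta_{1,i}, \beta_{2,i}$ effective and bounded above by $C_0 \deg(V_i)\, g^*(\omega_Y^p)$ in $A^p(Z)$, where $C_0 > 0$ depends only on $Y$ and its embedding. In particular $g^*([V_i]) \leq C_0 \deg(V_i)\, g^*(\omega_Y^p)$, and summing over $i$ yields
\begin{eqnarray*}
(f\circ g)^*(\omega_X^p) \leq C_0 \sum_i m_i \deg(V_i)\, g^*(\omega_Y^p) = C_0\, \deg(f^*(\omega_X^p))\, g^*(\omega_Y^p),
\end{eqnarray*}
which is the claimed bound with $C = C_0$, a constant independent of $f$ and $g$.

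The main obstacle is the functoriality step: for compositions of dominant rational maps, $(f \circ g)^*$ need not agree with $g^* \circ f^*$ on the nose, because a common resolution of the two graphs of $f$ and $g$ can differ from a resolution of the graph of $f \circ g$ by exceptional divisors, and the base-change identity for fiber squares (standard for morphisms) requires extra care in the rational setting. Fortunately any such discrepancy contributes only effective exceptional cycles, so the weaker inequality $(f\circ g)^*(\omega_X^p) \leq g^*(f^*(\omega_X^p))$ is all that is actually needed, and the remaining estimates proceed exactly as above.
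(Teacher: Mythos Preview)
Your final paragraph correctly identifies the weak point, but the fix you propose does not close the gap. The inequality $(f\circ g)^*(\omega_X^p)\leq g^*(f^*(\omega_X^p))$ in $A^p(Z)$ is precisely what is at stake, and the assertion that ``any such discrepancy contributes only effective exceptional cycles'' is unjustified. Concretely, write $W=f^*(\iota^*(H^p))\subset Y$ for a generic linear $H^p$; one does obtain $(f\circ g)^*(\iota^*(H^p))=g^o(W)$, the \emph{strict transform} of $W$ by $g$. But there is no reason for $g^o(W)\leq g^*(W)$ to hold as Chow classes: if $W$ fails to meet the critical image of $q_g:\Gamma_g\to Y$ properly, the Gysin class $q_g^*[W]$ is computed by excess intersection and need not be effective (already the self--pullback of a $(-1)$--curve along its inclusion is $-[\text{pt}]$), so $g^*(W)=(p_g)_*q_g^*[W]$ need not dominate the effective cycle $g^o(W)$. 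Your base--change identity $\alpha_*\beta^*=q_g^*(p_f)_*$ likewise fails here because $\widetilde{\Gamma}$ resolves only the main component of the fiber product, and the square is not Tor--independent; the sign of the error term is not controlled.

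The paper avoids this issue altogether by never comparing $g^o(W)$ with $g^*(W)$. Instead it restricts all three maps to suitable Zariski open sets where they are regular with fibers of the expected dimension, so that for generic $H^p$ one has $(f\circ g)^*(\iota^*(H^p))=g^o\!\bigl(f^*(\iota^*(H^p))\bigr)$ as varieties, and then invokes Lemma~\ref{LemmaDegreeOfStrictTransform} (the strict--transform estimate), not Lemma~\ref{LemmaDegreeOfPullback}(b), to bound $g^o(W)\leq C\deg(W)\,g^*(\omega_Y^p)$ directly. That lemma is proved by putting $W$ (or rather the cones $C_{L_i}(W_{i-1})$ appearing in the moving decomposition) into pencils whose generic members are in good position with respect to $g$, which is exactly the device that substitutes for the missing functoriality. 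If you want to repair your argument, replace the appeal to Lemma~\ref{LemmaDegreeOfPullback}(b) by Lemma~\ref{LemmaDegreeOfStrictTransform} applied to the single variety $W=f^*(\iota^*(H^p))$; the preliminary decomposition $\sum m_i[V_i]$ is then unnecessary.
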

\begin{proof}
We can find proper subvarieties $V_X\subset X,V_Y\subset Y$ and $V_Z\subset Z$ so that the maps $f_0:Y-V_Y\rightarrow X-V_X$ and $g_0:Z-V_Z\rightarrow Y-V_Y$ are regular and have all fibers of the correct dimensions. In fact, first consider the canonical projection $p_f:\Gamma _f\rightarrow X$, which is a surjective regular map between projective varieties. By the theorem on the dimension of fibers (see e.g. the corollary of Theorem 7 in Section 6.3 Chapter 1 in the book Shafarevich \cite{shafarevich}), there is a proper subvariety $V_1\subset X$ such that for any $V_X\subset X$ a proper subvariety such that $p_f:\Gamma _f-p_f^{-1}(V_X)\rightarrow X-V_X$ has all fibers of correct dimension. Now, $\Gamma _f$ is a modification of $Y$ via the canonical projection $\pi _f:\Gamma _f\rightarrow Y$, we can choose $V_X$ such that $\pi _f:\Gamma _f-p_f^{-1}(V_X)\rightarrow Y-\pi _f(p_f^{-1}(V_X))$ is an isomorphism. We can also arrange $V_X$ large enough such that the same argument applies to $V_Y=\pi _f(p_f^{-1}(V_X))$ will give that $Z- V_Z\rightarrow Y-V_Y$ is regular and has all fibers of the correct dimension. Then it follows that the composition $Z-V_Z\rightarrow X-V_X$ is regular and has all fibers of the correct dimension.   

Define by $(f\circ g)_0$ the restriction of $f\circ g$ to $Z-V_Z$. Then $(f\circ g)_0=f_0\circ g_0:Z-V_Z\rightarrow X-V_X$ and has all fibers of the correct dimension. We define the strict transforms $f^0$, $g^0$ and $(f\circ g)^0$ using these restriction maps $f_0,g_0$ and $(f\circ g)_0$. 

By Lemma \ref{LemmaDegreeOfPullback} a), we can find a linear subspace $H^p\subset \mathbb{P}^N_K$ so that $H^p$ intersects $X$ properly, $(f\circ g)^*(\iota ^*(H^p))$ is well-defined as a variety and has no component on $V_Z$, and $f^*(\iota ^*(H^p))$ is well-defined as a variety. Then
\begin{eqnarray*}
(f\circ g)^*(\iota ^*(H^p))=(f\circ g)^o(\iota ^*(H^p))
\end{eqnarray*} 
is the closure of $$(f\circ g)_0^{-1}(\iota ^*(H^p))=(f_0\circ g_0)^{-1}(\iota ^*(H^p))=(g_0)^{-1}f_0^{-1}(\iota ^*(H^p)).$$
Therefore $$(f\circ g)^*(\iota ^*(H^p))=g^of^o(\iota ^*(H^p))\leq g^of^*(\iota ^*(H^p)).$$
as subvarieties of $Z$. By Lemma \ref{LemmaDegreeOfStrictTransform}, we have the desired result. 
\end{proof}

Let $f:X\rightarrow X$ be a dominant rational map. Fix a number $p=0,\ldots ,k=\dim (X)$. Apply Lemma \ref{LemmaDegreeOfCompositionMaps} to $Y=Z=X$ and the maps $f^n,f^m$, we see that the sequence $n\mapsto \deg((f^n)^*(\omega _X^p))$ is sub-multiplicative. Therefore, we can define the $p$-th dynamical degree as follows
\begin{eqnarray*}
\lambda _p(f)=\lim _{n\rightarrow\infty}(\deg ((f^n)^*(\omega _X^p)))^{1/n}=\inf _{n\in \mathbb{N}}(C\deg ((f^n)^*(\omega _X^p)))^{1/n}.
\end{eqnarray*}
Here $C$ is the constant in Lemma \ref{LemmaDegreeOfCompositionMaps}. 

We now relate $\lambda _p(f)$ to the spectral radii $r_p(f^n)$ of the linear maps $(f^n)^*:N_{\mathbb{R}}^p(X)\rightarrow N_{\mathbb{R}}^p(X)$.

\begin{lemma}

a) There is a constant $C>0$ independent of $f$ so that
\begin{eqnarray*}
\|f^*(v)\|_1\leq C\|v\|_1\|f^*(\omega _X^p)\|_1,
\end{eqnarray*}
for all $v\in N_{\mathbb{R}}^p(X)$. Here the norm $\|\cdot \|_1$ is defined in (\ref{LabelDefinitionNorm}). Therefore if we denote by $f_p^*$ the linear map $f^*:N^p_{\mathbb{R}}(X)\rightarrow N^p_{\mathbb{R}}(X)$, and by $$\|A\|_1=\sup _{v\in N^p_{\mathbb{R}}(X), \|v\|_1=1}\|A(v)\|_1$$ the norm of a linear map $A:N^p_{\mathbb{R}}(X)\rightarrow N^p_{\mathbb{R}}(X)$ then
\begin{eqnarray*}
\frac{1}{\deg (\omega _X^p)}\|f^*(\omega _X^p)\|_1\leq \|f_p^*\|_1\leq C\|f^*(\omega _X^p)\|_1,
\end{eqnarray*}
here $C$ is the same constant as in the previous inequality.

b) There is a constant $C>0$ independent of $f$ so that $r_p(f)\leq C\|f^*(\omega _X^p)\|_1.$

c) We have $\lambda _p(f)=\lim _{n\rightarrow\infty}\|(f^n)_p^*\|_1^{1/n}\geq \limsup _{n\rightarrow\infty}(r_p(f^n))^{1/n}.$
\label{LemmaSpectralRadius}\end{lemma}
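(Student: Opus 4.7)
The plan is to deduce all three parts of the lemma from Lemma \ref{LemmaDegreeOfPullback} b) together with the explicit definition of the norm $\|\cdot\|_1$ in (\ref{LabelDefinitionNorm}).

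For part a), I first handle a single irreducible codimension-$p$ subvariety $W \subset X$. Lemma \ref{LemmaDegreeOfPullback} b) provides effective classes $\beta_1, \beta_2$ with $f^*(W) = \beta_1 - \beta_2$ and $\beta_i \leq C \deg(W) f^*(\omega_X^p)$ in $A^p(X)$. Passing to $N^p_{\mathbb{R}}(X)$ and using that $\|\alpha\|_1 = \deg(\alpha)$ for any effective $\alpha$, this gives $\|f^*(W)\|_1 \leq \deg(\beta_1)+\deg(\beta_2) \leq 2C \deg(W)\,\|f^*(\omega_X^p)\|_1$. For a general $v \in N^p_{\mathbb{R}}(X)$ and any $\varepsilon > 0$, pick an effective decomposition $v = v_1 - v_2$ with $\deg(v_1) + \deg(v_2) < \|v\|_1 + \varepsilon$; expanding each $v_i$ as a non-negative real combination of irreducible subvarieties and applying the previous bound termwise yields $\|f^*(v)\|_1 \leq 2C(\|v\|_1 + \varepsilon)\,\|f^*(\omega_X^p)\|_1$. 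Letting $\varepsilon \to 0$ gives the first inequality. The operator-norm bound $\|f_p^*\|_1 \leq C \|f^*(\omega_X^p)\|_1$ (with a new constant $C$) is then immediate, while the reverse inequality $\|f_p^*\|_1 \geq \|f^*(\omega_X^p)\|_1 / \deg(\omega_X^p)$ follows by evaluating $f_p^*$ on the unit vector $\omega_X^p / \deg(\omega_X^p)$, whose $\|\cdot\|_1$-norm equals $1$ because $\omega_X^p$ is effective.

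Part b) is a direct consequence of a) together with the elementary fact that the spectral radius of a linear endomorphism of a finite-dimensional normed vector space is bounded by any operator norm: $r_p(f) \leq \|f_p^*\|_1 \leq C \|f^*(\omega_X^p)\|_1$.

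For part c), I apply a) to $f^n$ in place of $f$. Since $(f^n)^*(\omega_X^p)$ is effective, $\|(f^n)^*(\omega_X^p)\|_1 = \deg((f^n)^*(\omega_X^p))$, so the two-sided estimate in a) sandwiches $\|(f^n)_p^*\|_1$ between $\deg((f^n)^*(\omega_X^p))/\deg(\omega_X^p)$ and $C \deg((f^n)^*(\omega_X^p))$. Taking $n$-th roots and letting $n \to \infty$, both outer sequences tend to $\lambda_p(f)$ (which exists by the sub-multiplicativity derived from Lemma \ref{LemmaDegreeOfCompositionMaps}), forcing $\lim_n \|(f^n)_p^*\|_1^{1/n} = \lambda_p(f)$. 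The final inequality $\limsup_n r_p(f^n)^{1/n} \leq \lambda_p(f)$ is then immediate from $r_p(f^n) \leq \|(f^n)_p^*\|_1$. The main obstacle I anticipate is the book-keeping in part a) when passing from a single irreducible $W$ to a general $v \in N^p_{\mathbb{R}}(X)$: one must use that an effective class in $N^p_{\mathbb{R}}(X)$ can be written as a non-negative real combination of irreducible classes whose degrees sum to its own degree, so that the estimate from Lemma \ref{LemmaDegreeOfPullback} b) propagates additively. Once a) is secured, parts b) and c) reduce to routine consequences of the spectral-radius/operator-norm inequality and the existing definition of $\lambda_p(f)$.
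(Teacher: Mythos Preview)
Your proof is correct and uses the same key input, Lemma \ref{LemmaDegreeOfPullback} b), as the paper. Parts b) and c) are essentially identical to the paper's arguments.

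For part a) there is a genuine, if minor, difference in route. The paper avoids working with the infimum in the definition of $\|\cdot\|_1$ directly: it fixes once and for all a basis $v_1,\ldots,v_m$ of $N^p(X)$ consisting of classes of irreducible subvarieties, introduces the max norm $\|\cdot\|_2$ with respect to this basis, applies Lemma \ref{LemmaDegreeOfPullback} b) only to the finitely many $v_j$, and then passes back to $\|\cdot\|_1$ via the equivalence of norms on a finite-dimensional space. Your approach instead unwinds the definition of $\|\cdot\|_1$: take a near-optimal effective decomposition $v=v_1-v_2$, expand each $v_i$ as a non-negative real combination of irreducible classes, and sum the estimate from Lemma \ref{LemmaDegreeOfPullback} b) termwise. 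Your route is more direct and avoids the auxiliary norm, but it requires the observation you flag at the end, namely that an effective element of $N^p_{\mathbb R}(X)$ is by definition a finite non-negative real combination $\sum c_j[V_j]$ of irreducible classes, and that the degree map is linear so $\deg(\sum c_j[V_j])=\sum c_j\deg(V_j)$. Once that is granted (which it is, given how effectivity and degree are set up in Section 2), your argument goes through cleanly. The paper's route sidesteps this bookkeeping entirely at the cost of introducing $\|\cdot\|_2$ and losing explicit control of the constant $C$ through the norm-equivalence step.
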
 
\begin{proof}

a) Let $m=\dim _{\mathbb{Z}}N^p(X)$, and we choose varieties $v_1,\ldots ,v_m$ to be a basis for $N^p(X)$. Then $v_1,\ldots ,v_m$ is also a basis for $N_{\mathbb{R}}^p(X)$. We denote by $\|\cdot \|_2$ the max norm on $N_{\mathbb{R}}^p(X)$ with respect to the basis $v_1,\ldots ,v_m$, thus for $v=a_1v_1+\ldots a_mv_m$ 
\begin{eqnarray*} 
\|v\|_2=\max \{|a_1|,\ldots ,|a_m|\}.
\end{eqnarray*}
By Lemma \ref{LemmaDegreeOfPullback}, we can write each $f^*(v_j)$ as a difference $\alpha _j-\beta _j$ where $\alpha _j$ and $\beta _j$ are effective and $\deg (\alpha _j),\deg (\beta _j)\leq C\deg (v_j)\deg (f^*(\omega _X^p))$. Here $C>0$ is independent of the map $f$. In particular, $\|f^*v_j\|_1\leq C\deg (v_j)\deg (f^*(\omega _X^p))$ for any $j=1,\ldots ,m$. Therefore
\begin{eqnarray*}
\|f^*v\|_1&=&\|a_1f^*(v_1)+\ldots a_mf^*(v_m)\|_1\leq |a_1| \|f^*(v_1)\|_1+\ldots +|a_m|\|f^*(v_m)\|_1\\
&\leq&C\|v\|_2\|f^*(\omega _X^p)\|_1\leq C'\|v\|_1\|f^*(\omega _X^p)\|_1
\end{eqnarray*}
since any norm on $N_{\mathbb{R}}^p(X)$ is equivalent to $\|.\|_1$. The other inequalities follow easily from definition of $\|f_p^*\|_1$. Hence a) is proved.
 
b) Iterating a) we obtain
\begin{eqnarray*}
\|(f^*)^nv\|_1\leq C^{n-1}\|v\|_1\|f^*(\omega _X^p)\|_1^n,
\end{eqnarray*}
for all $n\in \mathbb{N}$ and $v\in N_{\mathbb{R}}^p(X)$. Taking supremum on all $v$ with $\|v\|_1=1$ and then taking the limit of the $n$-th roots when $n\rightarrow \infty$, we obtain
\begin{eqnarray*}
r_p(f)\leq C \|f^*(\omega _X^p)\|_1.
\end{eqnarray*}
Here $C>0$ is the same constant as in a).

c) The equality 
\begin{eqnarray*}
\lambda _p(f)=\lim _{n\rightarrow\infty}\|(f^n)_p^*\|_1^{1/n}
\end{eqnarray*}
follows from a) and the definition of the dynamical degree $\lambda _p(f)$. 

The inequality $$\lambda _p(f)\geq \limsup _{n\rightarrow\infty}(r_p(f^n))^{1/n}$$ follows from b) and the definition of the dynamical degree $\lambda _p(f)$.
\end{proof}

Using Lemma \ref{LemmaDegreeOfCompositionMaps}, it is standard (see e.g. \cite{dinh-sibony10}) to prove the following result
\begin{lemma} The dynamical degrees are birational invariants. More precisely, if $X,Y$ are projective manifolds of the same dimension $k$, $f:X\rightarrow X$ and $g:Y\rightarrow Y$ are dominant rational maps, and $\pi :X\rightarrow Y$ is a birational map so that $\pi \circ f=g\circ \pi$, then $\lambda _p(f)=\lambda _p(g)$ for all $p=0,\ldots ,k$. 
\label{LemmaDynamicalDegreeBirationalInvariant}\end{lemma}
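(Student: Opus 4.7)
The plan is to leverage the semi-conjugacy $\pi\circ f=g\circ\pi$ together with the birationality of $\pi$ to reduce birational invariance to a sandwich estimate between $\deg((f^n)^*(\omega_X^p))$ and $\deg((g^n)^*(\omega_Y^p))$, with a multiplicative constant independent of $n$. The key input is Lemma~\ref{LemmaDegreeOfCompositionMaps}, which I plan to invoke twice; birationality of $\pi$ enters through the identity $f^n=\pi^{-1}\circ g^n\circ\pi$ of rational maps (and the symmetric identity $g^n=\pi\circ f^n\circ\pi^{-1}$).

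First I would verify that dominance is preserved in each composition appearing below, so that Lemma~\ref{LemmaDegreeOfCompositionMaps} applies verbatim, and that the iterated identity $f^n=\pi^{-1}\circ g^n\circ\pi$ holds as an equality of rational maps (which follows from iterating the semi-conjugacy on a dense open subset of $X$ where $\pi$ restricts to an isomorphism). Decomposing $f^n=(\pi^{-1}\circ g^n)\circ\pi$ and applying Lemma~\ref{LemmaDegreeOfCompositionMaps} with $f_{\mathrm{lem}}=\pi^{-1}\circ g^n$ and $g_{\mathrm{lem}}=\pi$ gives
\begin{eqnarray*}
(f^n)^*(\omega_X^p)\leq C\,\deg\!\bigl((\pi^{-1}\circ g^n)^*(\omega_X^p)\bigr)\,\pi^*(\omega_Y^p)
\end{eqnarray*}
in $A^p(X)$. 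A second application of the same lemma, this time to the composition $\pi^{-1}\circ g^n$ with $f_{\mathrm{lem}}=\pi^{-1}$ and $g_{\mathrm{lem}}=g^n$, followed by taking degrees on $Y$, yields
\begin{eqnarray*}
\deg\!\bigl((\pi^{-1}\circ g^n)^*(\omega_X^p)\bigr)\leq C\,\deg\!\bigl((\pi^{-1})^*(\omega_X^p)\bigr)\,\deg\!\bigl((g^n)^*(\omega_Y^p)\bigr).
\end{eqnarray*}
Taking degrees on $X$ in the first display and substituting the second produces
\begin{eqnarray*}
\deg\!\bigl((f^n)^*(\omega_X^p)\bigr)\leq M\,\deg\!\bigl((g^n)^*(\omega_Y^p)\bigr),
\end{eqnarray*}
where $M=C^{2}\deg((\pi^{-1})^*(\omega_X^p))\deg(\pi^*(\omega_Y^p))$ depends only on $\pi$ and $p$, not on $n$.

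Extracting $n$-th roots and letting $n\to\infty$, the factor $M^{1/n}$ tends to $1$, so $\lambda_p(f)\leq\lambda_p(g)$. The reverse inequality follows by running the same argument with $X$ and $Y$, and $\pi$ and $\pi^{-1}$, exchanged, giving $\lambda_p(g)\leq\lambda_p(f)$ and hence equality. The only point requiring care is the monotonicity of the degree map under the inequalities produced by Lemma~\ref{LemmaDegreeOfCompositionMaps}: if $\alpha\leq\beta$ with $\beta-\alpha$ effective in $A^p$, then intersecting with the effective class $\omega^{k-p}$ shows $\deg(\alpha)\leq\deg(\beta)$. The main (but minor) obstacle I anticipate is keeping track of which ambient space (and thus which $\omega$) is used when taking degrees at each step; once the decomposition $f^n=\pi^{-1}\circ g^n\circ\pi$ is fixed, the remainder is essentially bookkeeping.
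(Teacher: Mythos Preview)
Your proposal is correct and is precisely the ``standard'' argument the paper alludes to: the paper gives no details beyond invoking Lemma~\ref{LemmaDegreeOfCompositionMaps} and citing \cite{dinh-sibony10}, and your two-step application of that lemma to the factorization $f^n=\pi^{-1}\circ g^n\circ\pi$ (plus the symmetric factorization) is exactly how one unpacks that reference. The bookkeeping you flag---tracking which $\omega$ computes which degree and noting that the constant $C$ in Lemma~\ref{LemmaDegreeOfCompositionMaps} depends only on the fixed embeddings of $X$ and $Y$, not on $n$---is the only content, and you have handled it correctly.
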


Finally, using the Grothendieck-Hodge index theorem, we prove the log-concavity property of dynamical degrees.
\begin{lemma}
Let $f:X\rightarrow X$ be a rational map. For any $1\leq p\leq k-1=\dim (X)-1$
\begin{eqnarray*}
\lambda _p(f)^2\geq \lambda _{p-1}(f)\lambda _{p+1}(f).
\end{eqnarray*}
Moreover, $\lambda _p(f)\geq 1$ for all $0\leq p\leq k$.
\label{LemmaBoundForDynamicalDegrees}\end{lemma}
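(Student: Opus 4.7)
The plan is to rewrite $\deg((f^n)^*(\omega_X^p))$ as a mixed intersection number on the graph resolution, then deduce the desired log-concavity from the Khovanskii-Teissier inequality for nef classes, whose proof in our algebraic setting rests on the Grothendieck-Hodge index theorem already recalled in Section 2.3.

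First I would fix a resolution of singularities $\Gamma_n$ of the graph of $f^n$, with induced morphisms $p_n, g_n \colon \Gamma_n \to X$, so that by the projection formula and the definition of pullback via Method 2,
\begin{eqnarray*}
\deg((f^n)^*(\omega_X^p)) \;=\; (f^n)^*(\omega_X^p) \cdot \omega_X^{k-p} \;=\; (p_n)_*(g_n^*\omega_X^p) \cdot \omega_X^{k-p} \;=\; g_n^*(\omega_X)^{p} \cdot p_n^*(\omega_X)^{k-p}.
\end{eqnarray*}
Writing $\alpha_n = g_n^*(\omega_X)$ and $\beta_n = p_n^*(\omega_X)$, both nef on the smooth projective variety $\Gamma_n$ (as pullbacks of an ample class), I would set $s_p^{(n)} = \alpha_n^{p} \cdot \beta_n^{k-p}$, so that $\deg((f^n)^*(\omega_X^p)) = s_p^{(n)}$.

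Next I would invoke the Khovanskii-Teissier inequality for nef classes on $\Gamma_n$, namely $(s_p^{(n)})^2 \geq s_{p-1}^{(n)} \cdot s_{p+1}^{(n)}$ for $1 \leq p \leq k-1$. This is proved by reducing to the case of a smooth projective surface: intersecting with general members (using Bertini in characteristic zero) of sufficiently divisible powers of ample perturbations $\alpha_n + \varepsilon H$, $\beta_n + \varepsilon H$, one cuts out a smooth surface on which the desired inequality becomes the standard consequence of the Hodge index theorem on surfaces, $(D \cdot H)^2 \geq D^2 \cdot H^2$ for $H$ of positive self-intersection; passing to the limit $\varepsilon \to 0$ restores the nef case. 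Here the Hodge index statement on surfaces is exactly the signature $(1,\rho-1)$ assertion of the Grothendieck-Hodge index theorem recalled in Section 2.3. Taking $n$-th roots in the inequality $(s_p^{(n)})^2 \geq s_{p-1}^{(n)} \cdot s_{p+1}^{(n)}$ and sending $n \to \infty$ then yields $\lambda_p(f)^2 \geq \lambda_{p-1}(f) \lambda_{p+1}(f)$.

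For the remaining assertion $\lambda_p(f) \geq 1$, I would first handle the two endpoints. For $p=0$, we have $(f^n)^*(1) = 1$, so $\deg((f^n)^*(\omega_X^0)) = \omega_X^k = \deg(X)$, giving $\lambda_0(f) = \lim_{n} \deg(X)^{1/n} = 1$. For $p = k$, the projection formula applied to $g_n$ gives $\deg((f^n)^*(\omega_X^k)) = \deg(f^n) \cdot \deg(X)$, where $\deg(f^n) \geq 1$ is the topological degree of the dominant rational map $f^n$; hence $\lambda_k(f) \geq 1$. For intermediate $0 < p < k$, log-concavity implies that $p \mapsto \log \lambda_p(f)$ is concave on $\{0,1,\dots,k\}$, and concavity together with nonnegativity at the two endpoints forces $\log \lambda_p(f) \geq 0$, completing the proof.

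The main obstacle in this plan is the justification of the Khovanskii-Teissier inequality in the algebraic setting: one must verify that the classical surface-reduction argument — via Bertini and perturbation by an ample class to pass between nef and ample — goes through with the Chow-theoretic intersection product, and that the Hodge index step on the cut-out surface is indeed supplied by the Grothendieck-Hodge index theorem rather than by the complex-analytic one. Once this is in place, the rest is bookkeeping with $n$-th roots and the elementary concavity argument for the endpoint bound.
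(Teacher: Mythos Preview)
Your proposal is correct and follows essentially the same route as the paper: both pass to a smooth resolution of the graph, rewrite $\deg((f^n)^*\omega_X^p)$ as the mixed number $g_n^*(\omega_X)^p\cdot p_n^*(\omega_X)^{k-p}$ of two nef classes, perturb by an ample divisor to reduce to very ample classes, cut down to a smooth surface, and finish with the Grothendieck--Hodge index theorem. The only noteworthy difference is in the proof of $\lambda_p(f)\geq 1$: the paper observes directly that $(f^n)^*(\omega_X^p)$ is an effective nonempty cycle, so its degree is a positive integer, whereas you deduce it from the endpoint values $\lambda_0=1$, $\lambda_k\geq 1$ together with the log-concavity just established---both arguments are valid, the paper's being slightly more direct.
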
  
\begin{proof}
To prove the log-concavity, it is enough to show that for any rational map $f$ and for any $1\leq p\leq k-1$ then
\begin{eqnarray*}
\deg (f^*\omega _X^p)^2\geq \deg (f^*\omega _X^{p-1}).\deg (f^*\omega _X^{p+1}).
\end{eqnarray*}
Let $\Gamma$ be a resolution of singularities for the graph of $f$ (which exists by Hironaka's theorem since we are working on characteristic zero), and let $\pi ,g:\Gamma \rightarrow X$ be the projections. Then, $f=g\circ \pi ^{-1}$. It follows that
\begin{eqnarray*}
\deg (f^*\omega _X^j)=g^*(\omega _X^{j}).\pi ^*(\omega _X^{k-j})
\end{eqnarray*}
for all $0\leq j\leq k$. Define $\alpha =g^*(\omega _X)$ and $\beta =\pi ^*(\omega _X)$. These are nef divisors on $\Gamma$, and the desired inequality is
\begin{eqnarray*}
(\alpha ^{p}.\beta ^{k-p})^2\geq (\alpha ^{p-1}.\beta ^{k-p+1})\times (\alpha ^{p+1}.\beta ^{k-p-1}).
\end{eqnarray*}
Let $\omega$ be an ample divisor on $\Gamma$, replacing $\alpha $ and $\beta$ by $n\alpha +\omega$ and $n\beta +\omega$ for $n$ large enough, it suffices to prove the above inequality for the case $\alpha$ and $\beta$ are both very ample divisors. Then $W=\alpha ^{p-1}.\beta ^{k-p-1}$ is an irreducible smooth surface, and what we want to prove becomes
\begin{eqnarray*}
(\alpha |_W.\beta |_W)^2\geq (\alpha |_W^2)\times (\beta |_W^2).
\end{eqnarray*} 
(Here, $\alpha |_W$ and $\beta |_W$ are the pullbacks of $\alpha$ and $\beta$ to $W$.) This inequality follows from the Grothendieck-Hodge index theorem for the surface $W$.

Finally, that $\lambda _p(f)\geq 1$ for all $0\leq p\leq k$ follows from the fact that $f^*(\omega _X^p) $ is an effective non-empty subvariety of $X$, hence $\deg (f^*(\omega _X^p))$ is a positive integer, and hence $\geq 1$. 

\end{proof}

\subsection{$p$-stability}
Let $X$ be a projective manifold, and $f:X\rightarrow X$ a dominant rational map. Given $p=0,\ldots ,k=\dim (X)$, we say that $f$ is $p$-stable if for any $n\in \mathbb{N}$, $(f^n)^*=(f^*)^n$ on $N^{p}_{\mathbb{R}}(X)$. Note that when $p=1$ then $1$-stable is the same as $(f^n)^*=(f^*)^n$ on hypersurfaces for any $n=1,2,\ldots $. In the case $K=\mathbb{C}$, this notion was first used in Fornaess - Sibony \cite{fornaess-sibony1}.

{\bf Example.} If $X$ is a surface and $f:X\rightarrow X$ is a birational map, then by results in \cite{diller-favre} $f$ has a birational model $f_1:X_1\rightarrow X_1$ which is $1$-stable.

\section{The simplicity of the first dynamical degree}   
\label{SubsectionApplications}

In this section we give some algebraic analogues of the results in \cite{truong2}, concerning the simplicity of the first dynamical degree of a rational map. Let $K$ be an algebraic closed field of characteristic zero. 

\begin{lemma} Let $X\subset \mathbb{P}^N_K$ be a projective manifold of dimension $k$. Let $\pi :Z\rightarrow X$ be a blowup of $X$ along a smooth  submanifold $W=\pi (E)$ of codimension exactly $2$. Let $E$ be the exceptional divisor and let $L$ be a general fiber of $\pi$. Let $\alpha$ be a vector in $N^1_{\mathbb{C}}(Z)$. 

i) In $A^*(X)$ we have
\begin{eqnarray*}
(\pi )_*(E.E)=-W.
\end{eqnarray*}

ii)  
\begin{eqnarray*}
\pi ^*(\pi )_*(\alpha )=\alpha +(\{\alpha \}.\{L\})E.
\end{eqnarray*}

iii)  
\begin{eqnarray*}
(\pi )_*(\alpha . E)=(\{\alpha \}.\{L\})W.
\end{eqnarray*}

iv) 
\begin{eqnarray*}
(\pi )_*(\alpha ).(\pi )_* (\overline{\alpha })-(\pi )_*(\alpha . \overline{\alpha })=|\{\alpha \}.\{L\}|^2W.
\end{eqnarray*}
\label{LemmaPullPushFormulaForOneBlowupAlgebraicCase}\end{lemma}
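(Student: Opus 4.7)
The plan is to exploit the structure of the exceptional divisor of a blowup along a smooth codimension-$2$ centre. Since $W$ has codimension exactly $2$, the map $\pi|_E\colon E=\mathbb{P}(N_{W/X})\to W$ is a $\mathbb{P}^1$-bundle with normal bundle $N_{E/Z}\cong\mathcal{O}_E(-1)$; a general fibre $L$ of $\pi|_E$ satisfies $E.L=-1$, and for $\xi=c_1(\mathcal{O}_E(1))$ one has $(\pi|_E)_*(\xi\cap [E])=[W]$ by the standard Segre-class computation on a projective bundle. Together with the projection formula and the birationality identity $\pi_*\pi^*\gamma=\gamma$ for classes $\gamma$ on $X$, these are all the inputs I will need.

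For (i), I would apply the self-intersection formula $j^*j_*[E]=c_1(N_{E/Z})\cap[E]$ for the inclusion $j\colon E\hookrightarrow Z$ to write $E.E=j_*(-\xi\cap [E])$ in $A^*(Z)$, and then push forward via $\pi$ to obtain $\pi_*(E.E)=-(\pi|_E)_*(\xi\cap[E])=-W$. For (ii), I would use the decomposition $N^1_{\mathbb{C}}(Z)=\pi^*N^1_{\mathbb{C}}(X)\oplus\mathbb{C}\cdot E$ coming from the fact that $\pi$ is an isomorphism away from $E$: writing $\alpha=\pi^*\beta+aE$, one has $\pi_*\alpha=\beta$ (since $\pi_*E=0$ as a class of dimension $k-1$ and $\pi_*\pi^*=\mathrm{id}$), so $\pi^*\pi_*\alpha-\alpha=-aE$; intersecting with $L$ and using $\pi^*\beta.L=\beta.\pi_*L=0$ together with $E.L=-1$ gives $a=-\alpha.L$, hence the formula. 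For (iii), the class $\alpha.E$ is supported on $E$, so $\pi_*(\alpha.E)=(\pi|_E)_*(\alpha|_E)$; but the pushforward of a divisor on a $\mathbb{P}^1$-bundle is its fibre degree times the base, which is precisely $(\alpha.L)W$.

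For (iv), the idea is to pull back the left-hand side via $\pi^*$, expand using (ii), and push back down, exploiting $\pi_*\pi^*\gamma=\gamma$ for $\gamma$ on $X$ together with the fact that $\pi^*$ is a ring homomorphism. Concretely,
\begin{eqnarray*}
\pi_*\alpha\cdot\pi_*\overline{\alpha}&=&\pi_*\bigl((\alpha+(\alpha.L)E)(\overline{\alpha}+(\overline{\alpha}.L)E)\bigr)\\
&=&\pi_*(\alpha.\overline{\alpha})+(\alpha.L)\pi_*(\overline{\alpha}.E)+(\overline{\alpha}.L)\pi_*(\alpha.E)+(\alpha.L)(\overline{\alpha}.L)\pi_*(E.E),
\end{eqnarray*}
and the three correction terms collapse by (i) and (iii) to $2(\alpha.L)(\overline{\alpha}.L)W-(\alpha.L)(\overline{\alpha}.L)W=|(\alpha.L)|^2W$, giving (iv). The main obstacle is really just bookkeeping of signs and conventions in the self-intersection and projective-bundle formulas; once (i)--(iii) are in place, (iv) is a direct one-line expansion.
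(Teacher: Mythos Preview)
Your proof is correct and follows essentially the same approach as the paper: part (i) via the self-intersection formula for the exceptional divisor (which is exactly the formula from Fulton, Section~4.3, that the paper cites), and parts (ii)--(iv) deduced from (i) together with the decomposition $N^1_{\mathbb{C}}(Z)=\pi^*N^1_{\mathbb{C}}(X)\oplus\mathbb{C}\cdot E$ and the projection formula, which is precisely what the referenced Lemma~2.1 of \cite{truong2} does. Your write-up simply spells out the details that the paper delegates to those two references.
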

\begin{proof}
i) follows from the formula at the beginning of Section 4.3 in \cite{fulton}. Then ii), iii) and iv) follows from i) as in the proof of Lemma 2.1 in \cite{truong2}.
\end{proof}

\begin{proposition}
Let $X$ and $Y$ be projective manifolds, and $h:X\rightarrow Y$ a dominant rational map. Further, let $u\in N^1_{\mathbb{C}}(Y)$, then $h^*(u).h^*(\overline{u} )-h^*(u . \overline{u})\in N^2_{\mathbb{R}}(X)$ is effective.
\label{LemmaPullPushFormulaForMeromorphicMapsAlgebraicCase}\end{proposition}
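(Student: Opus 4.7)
The plan is to reduce the statement to the case of a single blowup along a smooth center, where Lemma \ref{LemmaPullPushFormulaForOneBlowupAlgebraicCase} applies directly. First I would resolve the indeterminacy of $h$: choose a smooth projective variety $\Gamma$ together with a birational morphism $p:\Gamma\to X$ and a morphism $g:\Gamma\to Y$ with $g=h\circ p$ as rational maps, so that $h^{*}u=p_{*}g^{*}u$. Since $g$ is a morphism, $g^{*}$ is a ring homomorphism on the Chow ring, and hence $g^{*}(u.\bar{u})=g^{*}(u).g^{*}(\bar{u})$. Setting $\alpha:=g^{*}u\in N^{1}_{\mathbb{C}}(\Gamma)$, the claim reduces to
\begin{equation*}
p_{*}(\alpha).p_{*}(\bar{\alpha})-p_{*}(\alpha.\bar{\alpha})\geq 0 \qquad\text{in } N^{2}_{\mathbb{R}}(X).
\end{equation*}

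Next I would factor the birational morphism $p$ through a composition of blowups along smooth centers. By Hironaka's theorem (valid in characteristic zero), there is a further birational morphism $q:Z\to\Gamma$ such that $\rho:=p\circ q:Z\to X$ is a finite composition of blowups along smooth centers. Replacing $\alpha$ by $q^{*}\alpha$ and using $q_{*}q^{*}=\mathrm{id}$ together with $q^{*}$ being a ring homomorphism, the inequality for $(p,\alpha)$ is equivalent to the inequality for $(\rho,q^{*}\alpha)$. The inequality is moreover multiplicative under composition: if it holds for two composable birational morphisms $p_{1}$ and $p_{2}$ (for every input class), then applying the hypothesis for $p_{2}$ and then for $p_{1}$, and using that pushforward preserves effectivity, yields the inequality for $p_{1}\circ p_{2}$. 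By induction on the number of blowups, it is enough to prove the statement when $\pi:Z\to X$ is a single blowup of a smooth center $W$ of codimension $r\geq 2$ and $\alpha\in N^{1}_{\mathbb{C}}(Z)$ is arbitrary.

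For this base case I would use the standard decomposition $N^{1}_{\mathbb{C}}(Z)=\pi^{*}N^{1}_{\mathbb{C}}(X)\oplus\mathbb{C}[E]$ to write $\alpha=\pi^{*}\beta+cE$. Since $\pi_{*}E=0$ (because $\dim E>\dim W$ when $r\geq 2$) and $\pi_{*}\pi^{*}=\mathrm{id}$, the projection formula delivers $\pi_{*}(\pi^{*}\beta.E)=0$ and, after a brief computation,
\begin{equation*}
\pi_{*}(\alpha).\pi_{*}(\bar{\alpha})-\pi_{*}(\alpha.\bar{\alpha})=-|c|^{2}\,\pi_{*}(E^{2}).
\end{equation*}
When $r=2$, part (i) of Lemma \ref{LemmaPullPushFormulaForOneBlowupAlgebraicCase} gives $\pi_{*}(E^{2})=-W$, so the difference equals $|c|^{2}[W]$, which is effective. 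When $r\geq 3$, a dimension count forces $\pi_{*}(E^{2})=0$: the cycle $E^{2}$ has codimension $2$ in $Z$, but is supported over $W$, which has codimension $r\geq 3$ in $X$.

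The only substantive obstacle is the reduction to smooth blowups; this is exactly where characteristic zero enters, via Hironaka's resolution of singularities. Everything else is formal, driven by the projection formula, the known structure of $N^{1}$ of a smooth blowup, and the multiplicativity of the inequality under composition.
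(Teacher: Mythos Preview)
Your proof is correct and follows essentially the same route as the paper's (which defers to Proposition~2.2 of \cite{truong2}): resolve $h$ by a composition of blowups along smooth centers via Hironaka, reduce by induction to a single smooth blowup, and finish with the computation encoded in Lemma~\ref{LemmaPullPushFormulaForOneBlowupAlgebraicCase}. The only cosmetic difference is that you first choose an arbitrary smooth resolution $\Gamma$ and then dominate $p$ by a tower of smooth blowups, whereas the paper invokes Hironaka's elimination of indeterminacies directly to produce $p$ as such a tower from the outset.
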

\begin{proof}
The proof is identical with that of Proposition 2.2 in \cite{truong2}, the only difference here is that we use Hironaka's elimination of indeterminacies for rational maps on projective manifolds over algebraic closed fields of characteristic zero (see e.g. Corollary 1.76 in Koll\'ar \cite{kollar} and Theorem 7.21 in Harris \cite{harris}). (In the algebraic case, the Hironaka's elimination of indeterminacies for a rational map $f:X\rightarrow Y$ is a consequence of the basic monomialization theorem, applied to the ideal generated by the components of the map $f$ in an ambient projective space of $Y$.)
\end{proof}

Now we state the analogs of Theorems 1.1 and 1.2 in \cite{truong2}. We omit the proofs of these results here since they are similar to those of Theorems 1.1 and 1.2 in \cite{truong2}. 

\begin{theorem} Let $X\subset \mathbb{P}^N_K$ be a projective manifold of dimension $k$, and let $f:X\rightarrow X$ be a dominant rational map which is $1$-stable. Assume that $\lambda _1(f)^2>\lambda _2(f)$. Then $\lambda _1(f)$ is a simple eigenvalue of $f^*:N^1_{\mathbb{R}}(X)\rightarrow N^1_{\mathbb{R}}(X)$. Further, $\lambda _1(f)$ is the only eigenvalue of modulus greater than $\sqrt{\lambda _2(f)}$.  
\label{TheoremFirstDynamicalDegreeIsSimpleAlgebraicCase}\end{theorem}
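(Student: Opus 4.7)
My plan is to adapt the proof of Theorems 1.1--1.2 of \cite{truong2} to the algebraic setting: the Grothendieck--Hodge index theorem of Section 2.3 plays the role of the classical Hodge index theorem, Proposition \ref{LemmaPullPushFormulaForMeromorphicMapsAlgebraicCase} replaces the Khovanskii--Teissier type inequality for meromorphic maps, and effective classes in $N^p_\R(X)$ play the role of positive closed currents. The first step is to produce a nef eigenclass $\theta \in N^1_\R(X)$ with $f^*\theta = \lambda_1(f)\theta$. By Lemma \ref{LemmaSpectralRadius}(c) and $1$-stability, $\lambda_1(f)$ is the spectral radius of $f^*$ on $N^1_\R(X)$; since $f^*$ preserves the closed salient full-dimensional nef cone, a Perron--Frobenius argument applied to cluster values of $(f^n)^*(\omega_X)/\lambda_1(f)^n$ furnishes such a $\theta$, normalized so that $\deg(\theta.\omega_X^{k-1}) = 1$.

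For any eigenvector $u \in N^1_\C(X)$ with $f^*u = \mu u$, I would apply Proposition \ref{LemmaPullPushFormulaForMeromorphicMapsAlgebraicCase} to $h = f^n$ and use $1$-stability $(f^n)^*u = \mu^n u$: the class
\begin{equation*}
(f^n)^*u.(f^n)^*\overline{u} - (f^n)^*(u.\overline{u}) \;=\; |\mu|^{2n}\,u.\overline{u} - (f^n)^*(u.\overline{u})
\end{equation*}
is effective in $N^2_\R(X)$. Intersecting with the ample class $\omega_X^{k-2}$, which pairs non-negatively with effective classes, yields
\begin{equation*}
|\mu|^{2n}\,\mathcal{H}(u,u) \;\geq\; \deg\bigl((f^n)^*(u.\overline{u}).\omega_X^{k-2}\bigr).
\end{equation*}
By Lemma \ref{LemmaSpectralRadius} and the definition of $\lambda_2(f)$, the right-hand side has absolute value at most $C_\varepsilon(\lambda_2(f)+\varepsilon)^n$ for every $\varepsilon>0$. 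Hence whenever $|\mu|^2 > \lambda_2(f)$, dividing through and letting $n \to \infty$ forces $\mathcal{H}(u,u) \geq 0$; in particular, taking $u = \theta$ and using $\lambda_1(f)^2 > \lambda_2(f)$ also gives $\mathcal{H}(\theta,\theta) \geq 0$.

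To conclude, I would polarize: the Hermitian form $\Phi(u,v) := f^*u.\overline{f^*v} - f^*(u.\overline{v})$ is positive semi-definite after pairing with $\omega_X^{k-2}$, so a Cauchy--Schwarz argument combined with the eigenvalue equations produces $\mathcal{H}(\theta,u) = 0$ for every eigenvector $u$ with $\mu \neq \lambda_1(f)$ and $|\mu| > \sqrt{\lambda_2(f)}$. Upgrading $\mathcal{H}(\theta,\theta) \geq 0$ to strict positivity (by exploiting $\lambda_1(f)^2 > \lambda_2(f)$ and a limiting argument on the normalized Perron sequence), the Grothendieck--Hodge signature $(1,\rho-1)$ then forces $\mathcal{H}(u,u) < 0$, contradicting the previous paragraph. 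Thus $\lambda_1(f)$ is the unique eigenvalue of modulus exceeding $\sqrt{\lambda_2(f)}$, and its eigenspace is spanned by $\theta$. Absence of a Jordan block is ruled out by a standard argument: a rank-$2$ block would insert a polynomial-in-$n$ factor on the left-hand side of the core estimate, incompatible with the $(\lambda_2(f)+\varepsilon)^n$ bound on the right. The main obstacle I anticipate is establishing the strict positivity $\mathcal{H}(\theta,\theta) > 0$ and the orthogonality $\mathcal{H}(\theta,u) = 0$ purely within $N^*_\R(X)$, since in \cite{truong2} these steps use positivity of closed $(1,1)$-currents to compare classes across dimensions; the algebraic replacement requires careful use of the polarization of Proposition \ref{LemmaPullPushFormulaForMeromorphicMapsAlgebraicCase} and the associated positive semi-definite Hermitian form.
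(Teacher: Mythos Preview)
Your proposal is correct and takes essentially the same approach as the paper: the paper does not write out a proof but simply states that the argument is identical to that of Theorems~1.1 and~1.2 in \cite{truong2}, with the algebraic substitutes (Proposition~\ref{LemmaPullPushFormulaForMeromorphicMapsAlgebraicCase}, the Grothendieck--Hodge index theorem, effective classes in $N^p_{\R}(X)$) already set up in Sections~2 and~4. Your outline is precisely this adaptation, and the obstacles you flag (strict positivity of $\mathcal{H}(\theta,\theta)$ and the orthogonality step) are exactly the places where the current-theoretic arguments of \cite{truong2} must be rephrased via the positive semi-definite Hermitian form obtained by polarizing Proposition~\ref{LemmaPullPushFormulaForMeromorphicMapsAlgebraicCase}.
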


We denote by $r_1(f)$ and $r_2(f)$ the spectral radii of the linear maps $f^*:N^1_{\mathbb{R}}(X)\rightarrow N^1_{\mathbb{R}}(X)$ and $f^*:N^2_{\mathbb{R}}(X)\rightarrow N^2_{\mathbb{R}}(X)$.
\begin{theorem}
Let $X\subset \mathbb{P}^N_K$ be a projective manifold, and let $f:X\rightarrow X$ be a dominant rational map. Assume that $f^*:N^2_{\mathbb{R}}(X)\rightarrow N^2_{\mathbb{R}}(X)$ preserves the cone of effective classes. Then  

1) We have $r_1(f)^2\geq r_2(f)$.

2) Assume moreover that $r_1(f)^2>r_2(f)$. Then $r_1(f)$ is a simple eigenvalue of $f^*:N^1_{\mathbb{R}}(X)\rightarrow N^1_{\mathbb{R}}(X)$. Further, $r_1(f)$ is the only eigenvalue of modulus greater than $\sqrt{r_2(f)}$.  

\label{TheoremTheCaseNotStableAlgebraicCase}\end{theorem}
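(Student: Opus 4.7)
The plan is to follow the strategy of Theorems 1.1 and 1.2 in \cite{truong2}, using Proposition \ref{LemmaPullPushFormulaForMeromorphicMapsAlgebraicCase} as the algebraic substitute for the K\"ahler pull-push inequality on positive $(1,1)$-currents, and the hypothesis that $f^*$ preserves the cone of effective classes in $N^2_\R(X)$ as the substitute for the ambient positivity of $(2,2)$-currents used in \cite{truong2}.

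For part 1), observe first that since $f$ is dominant, $f^*$ automatically preserves the pseudo-effective cone in $N^1_\R(X)$. As this cone is closed, pointed, and full-dimensional in a finite-dimensional real vector space, a Perron--Frobenius (Krein--Rutman) argument produces a nonzero pseudo-effective class $u\in N^1_\R(X)$ with $f^*u=r_1(f)\,u$. Applying Proposition \ref{LemmaPullPushFormulaForMeromorphicMapsAlgebraicCase} to $u$ yields
\[
r_1(f)^2\,u\cdot u\;-\;f^*(u\cdot u)\;\geq\;0\quad\text{in }N^2_\R(X),
\]
and iterating this inequality using the standing hypothesis gives $r_1(f)^{2n}\,u\cdot u\geq (f^*)^n(u\cdot u)$ for every $n\geq 1$. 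Taking $\|\cdot\|_1$-norms (which agree with degrees on effective classes) yields the estimate $\|(f^*)^n(u\cdot u)\|_1\leq r_1(f)^{2n}\deg(u\cdot u)$. A small perturbation of $u$ by an ample class places $u\cdot u$ in the interior of the effective cone in $N^2_\R(X)$, so a second application of Perron--Frobenius on the $f^*$-invariant cone shows that the exponential growth rate of $(f^*)^n(u\cdot u)$ equals the spectral radius $r_2(f)$; this forces $r_2(f)\leq r_1(f)^2$.

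For part 2), argue by contradiction. Suppose $r_1(f)^2>r_2(f)$ but either $r_1(f)$ fails to be simple or $f^*|_{N^1_\R(X)}$ has a second eigenvalue $\nu$ with $|\nu|\geq r_1(f)$. In the former case choose a Jordan chain $f^*u=r_1(f)u$, $f^*v=r_1(f)v+u$; in the latter choose eigenvectors $f^*u=r_1(f)u$ and $f^*v=\nu v$. In each case a polarized version of Proposition \ref{LemmaPullPushFormulaForMeromorphicMapsAlgebraicCase} (applied to complex combinations $u+tv$) combined with iteration by $f^*$, which preserves effectivity in $N^2_\R(X)$, produces effective lower bounds on the growth of either $(f^*)^n(v\cdot\bar v)$ (with asymptotics $n^2 r_1(f)^{2n-2}$ in the Jordan case) or $(f^*)^n(u\cdot\bar v)$ (with asymptotics $|\nu|^{2n}$ in the distinct-eigenvalue case). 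Both exceed $r_2(f)^n$ for large $n$ under the hypothesis $r_1(f)^2>r_2(f)$, contradicting the very definition of $r_2(f)$. The uniqueness clause then follows by rerunning the same argument on any hypothetical eigenvalue of modulus $>\sqrt{r_2(f)}$.

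The main technical obstacle is the Perron--Frobenius step that extracts the spectral radius $r_2(f)$ from the growth of a single orbit $(f^*)^n(u\cdot u)$: in \cite{truong2} this is handled via mass estimates on positive currents, whereas here it must be engineered through the interior-point perturbation above, together with the nondegeneracy of the intersection pairing $N^2(X)\times N^{k-2}(X)\to\Z$ (used via Lemma \ref{LemmaDegreeOfIntersections}) to rule out hidden cancellation. Similar care is needed in the polarization step of part 2), where one must verify that the chosen class $u\cdot\bar v$ (or the Jordan remainder $u\cdot u$) has nonzero projection onto the $r_2(f)$-eigenspace of $f^*$ on $N^2_\R(X)$.
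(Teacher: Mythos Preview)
Your high-level plan---transport the proofs of Theorems 1.1 and 1.2 in \cite{truong2} using Proposition \ref{LemmaPullPushFormulaForMeromorphicMapsAlgebraicCase} in place of the K\"ahler current inequality and the cone-preservation hypothesis in place of positivity of $(2,2)$-currents---is precisely the paper's approach; the paper omits the proof entirely and simply declares it ``similar to'' \cite{truong2}.

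That said, the details you supply have genuine gaps. In part 1), perturbing the Perron--Frobenius eigenvector $u$ by an ample class destroys the relation $f^*u=r_1(f)u$, so after perturbation the iterated inequality $r_1(f)^{2n}\,u\cdot u\geq (f^*)^n(u\cdot u)$ no longer follows from Proposition \ref{LemmaPullPushFormulaForMeromorphicMapsAlgebraicCase}; and without the perturbation you cannot guarantee that $u\cdot u$ is nonzero (let alone interior to the effective cone), so its orbit may fail to detect $r_2(f)$. In part 2), the direction of your inequalities is backwards: Proposition \ref{LemmaPullPushFormulaForMeromorphicMapsAlgebraicCase} combined with the cone-preservation hypothesis yields \emph{upper} bounds $(f^*)^n(v\cdot\bar v)\leq ((f^*)^n v)\cdot((f^*)^n\bar v)$, not the ``effective lower bounds'' you claim, and the stated asymptotic $|\nu|^{2n}$ for $(f^*)^n(u\cdot\bar v)$ is not what the polarization actually produces.

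The ingredient missing from your sketch is the Grothendieck--Hodge index theorem, which the paper recalls in Section 2.3 exactly for this purpose. It is the signature $(1,\rho-1)$ of the Hermitian form $\mathcal H(u,v)=\deg(u\cdot\bar v\cdot\omega_X^{k-2})$ on $N^1_\C(X)$ that forces a putative second eigenvector $v$ (orthogonalized against $u$) to satisfy $\mathcal H(v,v)\leq 0$, and it is this negativity---paired against the effectivity coming from Proposition \ref{LemmaPullPushFormulaForMeromorphicMapsAlgebraicCase} and the hypothesis on $N^2$---that drives the contradiction in \cite{truong2}. Without it, neither your simplicity argument nor your uniqueness argument closes.
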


\section{Relative dynamical degrees}

Let $X$ and $Y$ be smooth projective varieties over an algebraic closed field of characteristic zero, of dimensions $k$ and $l$ respectively. Let $f:X\rightarrow X$ and $g:Y\rightarrow Y$ be dominant rational maps. We say that $f$ and $g$ are semi-conjugate if there is a dominant rational map $\pi :X\rightarrow Y$ such that $\pi \circ f=g\circ \pi$. When the ground field is the field $\mathbb{C}$ of complex numbers, Dinh and Nguyen \cite{dinh-nguyen} defined relative dynamical degrees $\lambda _p(f|\pi )$. Their paper uses regularization of positive closed currents on compact K\"ahler manifolds. As mentioned above, for varieties over a field other than $\mathbb{C}$, we do not have such regularization results. In this section we give a purely algebraic definition of relative dynamical degrees for maps over an arbitrary algebraic closed field of characteristic zero. We also show that these relative dynamical degrees are birational invariants and are log-concave. 

First, we note that by Hironaka's resolution of singularities, we can make $\pi$ to be regular. We now first define relative dynamical degrees for the case where $\pi$ is regular. Then, by showing that these relative dynamical degrees are birational invariants, we can define the relative dynamical degrees for the general case where $\pi$ is merely rational. 

From now on, we assume that both $X$ and $Y$ are embedded in the same projective space $\mathbb{P}^N$. 

\subsection{The case $\pi$ is a regular map}

Because $Y$ has dimension $l$, a generic complete intersection of hyperplane sections $\omega _Y^l$ is a union of $\deg (Y)$ points. The preimages $\pi ^{-1}(\omega _Y^l)\subset X$ will play an important role in the definition of relative dynamical degrees. The advantage of the choice $\pi ^{-1}(\omega _Y^l)$ lies in that these are rationally equivalent and that $\pi ^{-1}(\omega _Y)$ is a nef class. 

\begin{lemma}
Let $\omega _{X,1}^p$ and $\omega _{X,2}^p$ be two generic complete intersections of hyperplane sections of $X$, and $\omega _{Y,1}^l$ and $\omega _{Y,2}^l$ two generic complete intersections of  hyperplane sections of $Y$. Then the intersections $f^*(\omega _{X,1}^p)\cap \pi ^{-1}(\omega _{Y,1}^l)$ and $f^*(\omega _{X,2}^p)\cap \pi ^{-1}(\omega _{Y,2}^l)$ are well-defined and have the same class in $N^*(X)$.
\label{LemmaRDD2}\end{lemma}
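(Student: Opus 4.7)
The plan is to separate the claim into two parts: (i) well-definedness of the geometric intersections, and (ii) numerical equivalence of the resulting cycles. For (i) I would use a two-stage genericity argument based on Lemma 3.3 and the theorem on the dimension of fibers, and for (ii) I would use that rational equivalence is preserved by pullback along a regular morphism and that the Chow-theoretic intersection product passes to $N^*(X)$.

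First, fix the embeddings $X, Y \subset \mathbb{P}^N_K$ and write $\omega_{Y,i}^l = \iota_Y^*(L_i)$ for a codimension-$l$ linear subspace $L_i \subset \mathbb{P}^N$. Since $\pi: X \to Y$ is a surjective regular morphism, the theorem on fiber dimension (as in Lemma 3.2) gives a proper subvariety $\mathcal{C}_\pi \subset Y$ outside of which the fibers of $\pi$ have the correct codimension $l$. For a generic $L_i$, the finite scheme $L_i \cap Y$ avoids $\mathcal{C}_\pi$, so $\pi^{-1}(L_i \cap Y) \subset X$ is a codimension-$l$ subvariety representing the class $\pi^*(\omega_Y^l) \in A^l(X)$. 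Next, apply Lemma 3.3(a), taking $Z \subset X$ to be a proper subvariety that contains the critical loci relevant to $f$, together with the already-chosen subvariety $\pi^{-1}(L_i \cap Y)$ and the pieces of it we wish to meet properly: for generic codimension-$p$ linear subspace $H_i^p \subset \mathbb{P}^N$, the pullback $f^*(\iota_X^*(H_i^p))$ is well-defined as an effective codimension-$p$ cycle on $X$ and has no component on $Z$. Since properness of intersection with a fixed subvariety is also an open condition on the Grassmannian of codimension-$p$ subspaces, a simultaneously generic $H_i^p$ makes $f^*(\omega_{X,i}^p) \cap \pi^{-1}(\omega_{Y,i}^l)$ a proper intersection of the correct codimension $p+l$.

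Second, for equality of classes: the generic complete intersections $\omega_{X,1}^p$ and $\omega_{X,2}^p$ are rationally equivalent on $X$ (they are parametrized by an open subset of a Grassmannian, which is rational), and likewise $\omega_{Y,1}^l \sim_{\mathrm{rat}} \omega_{Y,2}^l$ on $Y$. Because $f^*: A^p(X) \to A^p(X)$ and $\pi^*: A^l(Y) \to A^l(X)$ preserve rational equivalence, we get $f^*(\omega_{X,1}^p) = f^*(\omega_{X,2}^p)$ and $\pi^*(\omega_{Y,1}^l) = \pi^*(\omega_{Y,2}^l)$ already in $A^*(X)$. Since the intersections in the statement are proper, by the compatibility of the Chow intersection product with the geometric intersection of properly meeting cycles (Sections 8.1–8.3 of Fulton), they coincide with the product classes $f^*(\omega_{X,i}^p) \cdot \pi^*(\omega_{Y,i}^l) \in A^{p+l}(X)$; hence both intersections represent the same class in $A^{p+l}(X)$, and a fortiori the same class in $N^{p+l}(X)$.

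The main obstacle is really the well-definedness step, since we need a single generic choice of $H_i^p$ that handles three distinct requirements at once: (a) the conditions from Lemma 3.3 that make $f^*(\omega_{X,i}^p)$ an honest effective cycle, (b) the requirement that this cycle meet the fixed subvariety $\pi^{-1}(\omega_{Y,i}^l)$ properly in $X$, and (c) avoidance of components lying in loci where either condition could degenerate. Each of these excludes a proper closed subset of the parameter space, so their union is still proper, and the generic choice exists; this is the only point where some care is needed, and the remainder of the argument is essentially formal once the intersections are set up properly.
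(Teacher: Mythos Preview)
Your argument is correct, and the second half (equality of classes via rational equivalence and the compatibility of proper intersection with the Chow product) is essentially airtight. The route, however, differs in emphasis from the paper's. The paper does not separate well-definedness from equality: it invokes the pencil construction from the proof of Lemma~\ref{LemmaDegreeOfStrictTransform} to place the $\pi^{-1}(\omega_{Y,i}^l)$ as fibers of a single family $\mathcal{S}$ and the $f^*(\omega_{X,i}^p)$ as fibers of a single family $\mathcal{T}$, then observes that the total intersection $\mathcal{T}\cap\mathcal{S}$ has the correct dimension; both conclusions then drop out at once, since generic fibers of $\mathcal{T}\cap\mathcal{S}$ are proper intersections and all fibers of a pencil are rationally equivalent. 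Your approach instead treats well-definedness as a pure genericity/open-condition argument and handles equality abstractly through $A^*(X)$. This is more modular and arguably more transparent about \emph{why} the classes agree, at the cost of being slightly longer; the paper's pencil argument is terser but leans on machinery already built in Lemma~\ref{LemmaDegreeOfStrictTransform}. One small point worth tightening in your write-up: when you assert that proper intersection with $\pi^{-1}(\omega_{Y,i}^l)$ is a \emph{nonempty} open condition on $H_i^p$, you are implicitly using more than Lemma~\ref{LemmaDegreeOfPullback}(a) (which only gives ``no component on $Z$''); the cleanest way to secure nonemptiness is either the iterative hyperplane argument behind Lemma~\ref{LemmaBoundForPullbackKahlerForms} or, equivalently, exactly the pencil dimension count the paper uses.
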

\begin{proof}
 By the proof of Lemma \ref{LemmaDegreeOfStrictTransform}, both $\pi ^{-1}(\omega _{Y,1}^l)$ and $\pi ^{-1}(\omega _{Y,2}^l)$ are fibers of the same pencil $\mathcal{S}$. Similarly, both $f^*(\omega _{X,1}^p) $ and $f^*(\omega _{X,2}^p)$ are fibers of the same pencil $\mathcal{T}$. Moreover, $\mathcal{T}\cap \mathcal{S}$ has the correct dimension, by dimensional consideration. From this, the conclusions of the lemma follow. \end{proof}

By Lemma \ref{LemmaRDD2}, the number 
\begin{eqnarray*}
\deg _p(f|\pi ):=\deg (f^*(\omega _{X}^p)\cap \pi ^{-1}(\omega _{Y}^l))
\end{eqnarray*}
is well-defined and is independent of the choices of generic complete intersections of hyperplane sections $\omega _X^p$ and $\omega _Y^l$. 

Our next objective is to show that the following limit exists
\begin{eqnarray*}
\lambda _p(f|\pi )=\lim _{n\rightarrow\infty}\deg _p(f^n|\pi )^{1/n}.
\end{eqnarray*}
These limits are the relative dynamical degrees we seek to define. To prove the existence of these limits, we will make use of another quantity associated to $f$. 

{\bf Notation.} Let $\mathcal{B}_f\subset X$ be a proper subvariety, called a "bad" set for $f$, outside it the map $f$ has good properties needed for our purpose. For example, $\mathcal{B}_f$ may contain the critical and indeterminate set of $f$, but it may also contains the critical and indeterminate sets of some iterates of $f$ and the pre images of some "bad" set $\mathcal{B}_g\subset Y$. We will not specify these "bad" sets in advance, but will determine them specifically for each purpose. 

Let $V$ be a subvariety of $X$, whose any irreducible component does not belong to the "bad" set $\mathcal{B}_f$.  Then similarly to the above, the strict pushforward $f_0(\omega _X^j\cap (V-\mathcal{B}_f))$ is well-defined for a generic complete intersection of  hyperplane sections $\omega _X^j$. Moreover, the class in $N^*(X)$ of the closure of $f_0(\omega _X^j\cap (V-\mathcal{B}_f))$ is independent of the choice of $\mathcal{B}_f$ and such a generic complete intersection of hyperplane sections $\omega _X^j$. We choose, in particular $V=\pi ^{-1}(\omega _Y^l)$ where $\omega _Y^l$ is a generic complete intersection of hyperplane sections of $Y$. Then the class in $N^*(X)$  of the closure of $f_0(\omega _X^j\cap (\pi ^{-1}(\omega _Y^l)-\mathcal{B}_f))$  is independent of the choice of such $\mathcal{B}_f$, $\omega _X^j$ and $\omega _Y^l$. For $j=k-l-p$, we denote the degree of the closure of $f_0(\omega _X^j\cap (\pi ^{-1}(\omega _Y^l)-\mathcal{B}_f))$ by $\deg '_p(f|\pi )$.
\begin{lemma}
For all $0\leq p\leq k-l$, we have: $\deg _p(f|\pi )=\deg '_p(f|\pi )$.
\label{LemmaRDD3}\end{lemma}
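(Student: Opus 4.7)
The plan is to identify both $\deg_p(f|\pi)$ and $\deg'_p(f|\pi)$ with the degree of a common auxiliary $p$-dimensional cycle on $X$, namely $f_*Z$ where $Z:=\omega_X^{k-l-p}\cdot \pi^{-1}(\omega_Y^l)$. The equality $\deg'_p(f|\pi)=\deg(f_*Z)$ will follow by unwinding the definition of strict pushforward, while $\deg_p(f|\pi)=\deg(f_*Z)$ will follow from two applications of the projection formula to $f$ via a resolution of its graph.

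First I would fix the setup by enlarging $\mathcal{B}_f\subset X$ so that it contains the indeterminacy of $f$, makes $f_0=f|_{X-\mathcal{B}_f}$ a morphism, and so that $\mathcal{B}_f\supset \pi^{-1}(D)$ for a proper subvariety $D\subset Y$ chosen large enough that for $y\notin D$ the fiber $\pi^{-1}(y)$ has the correct dimension $k-l$ and is disjoint from the indeterminacy of $f$. I pick a generic complete intersection $\omega_Y^l$ of hyperplane sections of $Y$ whose (finite) support avoids $D$, so that $V:=\pi^{-1}(\omega_Y^l)$ is a disjoint union of "good" fibers lying outside $\mathcal{B}_f$. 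By Lemmas~\ref{LemmaGoodPullbackVariety} and~\ref{LemmaDegreeOfPullback}, I then pick generic complete intersections $\omega_X^p$ and $\omega_X^{k-l-p}$ of hyperplane sections of $X$ so that $f^*(\omega_X^p)$ is well-defined as an effective cycle none of whose components lies in $\mathcal{B}_f$, and so that the triple intersection $f^*(\omega_X^p)\cdot V\cdot \omega_X^{k-l-p}$ has the expected dimension zero. In particular $Z=\omega_X^{k-l-p}\cdot V$ is a well-defined effective $p$-cycle, no component of which lies in $\mathcal{B}_f$.

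With the setup in place, I would choose a resolution of singularities $\Gamma\to \Gamma_f$ with projections $p_1,q_1:\Gamma\to X$ (so $f=q_1\circ p_1^{-1}$), and set $f_*Z:=(q_1)_*(p_1^*Z)$. Two applications of the projection formula then give
\begin{eqnarray*}
\deg_p(f|\pi)
&=&\deg\bigl(Z\cdot f^*(\omega_X^p)\bigr)
=\deg\bigl(Z\cdot (p_1)_*q_1^*(\omega_X^p)\bigr)\\
&=&\deg\bigl(p_1^*(Z)\cdot q_1^*(\omega_X^p)\bigr)
=\deg\bigl((q_1)_*p_1^*(Z)\cdot \omega_X^p\bigr)
=\deg(f_*Z),
\end{eqnarray*}
where I have used that proper pushforward preserves degrees of $0$-cycles.

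The main obstacle is the remaining cycle-theoretic identification of $f_*Z$ with the closure of $(f_0)_*\bigl(Z\cap(X-\mathcal{B}_f)\bigr)$. Writing $Z=\sum m_iZ_i$ with $Z_i$ irreducible of dimension $p$ and not contained in $\mathcal{B}_f$, I would argue that by the genericity of the hyperplane sections defining $Z$, no excess codimension-$(k-p)$ component of $p_1^{-1}(Z_i)$ is supported over $\mathcal{B}_f$, so that $p_1^*[Z_i]=[\tilde Z_i]$ for the strict transform $\tilde Z_i=\overline{p_1^{-1}(Z_i\cap(X-\mathcal{B}_f))}$. Since $q_1\circ p_1^{-1}=f_0$ over $X-\mathcal{B}_f$, the pushforward $(q_1)_*[\tilde Z_i]$ equals $d_i\cdot\bigl[\overline{f_0(Z_i\cap(X-\mathcal{B}_f))}\bigr]$, where $d_i$ is the generic degree of $f_0|_{Z_i}$ (or zero if the image has lower dimension); this is precisely the coefficient of the closure of the strict pushforward of $m_iZ_i$. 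Summing over $i$ yields $f_*Z=\overline{(f_0)_*(Z\cap(X-\mathcal{B}_f))}$ as $p$-cycles on $X$, so their degrees coincide, completing the proof.
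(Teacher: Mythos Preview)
Your argument is correct and follows essentially the same idea as the paper's proof. Both compute the common $0$-cycle on (a model of) the graph of $f$ and push it to the two factors: the paper phrases this as ``$\deg((\pi_1)_*\alpha)=\deg((\pi_2)_*\alpha)$'' for a suitable $0$-cycle $\alpha$ on $\Gamma_f\subset X\times X$, while you unfold the same computation as two applications of the projection formula on a smooth resolution $\Gamma\to\Gamma_f$. Your setup is in fact slightly cleaner than needed: since you arranged $V=\pi^{-1}(\omega_Y^l)$ (and hence $Z$) to be disjoint from $\mathcal{B}_f$, the identification $p_1^*[Z_i]=[\tilde Z_i]$ and $f_*Z=\overline{(f_0)_*(Z)}$ is immediate without the extra genericity discussion in your last paragraph.
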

\begin{proof}
We choose a generic complete intersection of hyperplane sections $\omega _X^{k-l-p}$ of $X$. By definition
\begin{eqnarray*}
\deg _p(f|\pi )=\omega _X^{k-l-p}.Z,
\end{eqnarray*}
where $Z$ is the closure of $f^*(\omega _X^p).(\pi ^{-1}(\omega _Y^l)-\mathcal{B}_f)$. Since $\omega _X^{k-l-p}$ is a generic complete intersection of hyperplane sections, we have $\omega _X^{k-l-p}\cap Z'=\emptyset$, where $Z'=Z-f^*(\omega _X^p).(\pi ^{-1}(\omega _Y^l)-\mathcal{B}_f)$. Moreover, we can choose so that $\omega _X^{k-l-p}.f^*(\omega _X^p).(\pi ^{-1}(\omega _Y^l)-\mathcal{B}_f)$, which is a $0$-cycle, does not intersect any priori given subvariety of $X$. 

Let $\Gamma _f$ be the graph  of $f$, and let $\pi _1,\pi _2$ be the two projections $X\times X\rightarrow X$. We consider the following $0$-cycle on $X\times X$: $\pi _1^*(\omega _X^{k-l-p}.f^*(\omega _X^p).(\pi ^{-1}(\omega _Y^l)-\mathcal{B}_f)). \pi _2^*(\omega _X^p).\Gamma _f$. Denote by $\alpha$ this zero cycle. The degree of $(\pi _1)_*(\alpha )$ is $\deg _p(f|\pi )$ and the degree of $(\pi _2)_*(\alpha )$ is $\deg '_p(f|\pi )$. Then, the lemma follows from the fact that the degrees of the push forwards by $\pi _1$ and $\pi _2$ of a $0$-cycle on $X\times X$ are the same. 
 \end{proof}

Now we define relative dynamical degrees. We note that the next theorem is valid over a field of arbitrary characteristic, see the Remark inside the body of the proof. In particular, the Sard-Bertini's type theorem for characteristic zero is not needed in the algebraic case (compare with the proof of Proposition 3.3. in \cite{dinh-nguyen}).
\begin{theorem}
For any $0\leq p\leq k-l$, the following limit exists
\begin{eqnarray*}
\lambda _p(f|\pi ):=\lim _{n\rightarrow\infty}\deg '_p(f^n|\pi )^{1/n}=\lim _{n\rightarrow \infty}\deg _p(f^n|\pi )^{1/n}.
\end{eqnarray*}
We call $\lambda _p(f|\pi )$ the $p$-th relative dynamical degree of $f$ with respect to $\pi$.
\label{TheoremRDD4}\end{theorem}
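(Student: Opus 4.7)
The plan is to apply Fekete's lemma. Since Lemma \ref{LemmaRDD3} gives $\deg_p(f^n|\pi)=\deg'_p(f^n|\pi)$ for every $n$, existence of either limit implies existence of the other and the two are then equal, so it suffices to produce a constant $C>0$, depending only on $X$, $Y$, $\pi$ and the fixed embeddings in $\mathbb{P}^N_K$, such that
\[
\deg_p(f^{n+m}|\pi)\;\leq\;C\cdot\deg_p(f^n|\pi)\cdot\deg_p(f^m|\pi) \qquad\text{for all } n,m\geq 1.
\]
Then $\log\bigl(C\cdot\deg_p(f^n|\pi)\bigr)$ is subadditive and Fekete's lemma delivers the limit.

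For the submultiplicativity I would combine the projection formula with the semi-conjugacy $\pi\circ f=g\circ\pi$. Set $F=\omega_X^{k-l-p}\cdot\pi^*(\omega_Y^l)$, an effective $p$-dimensional cycle supported in a union of generic fibers of $\pi$. Writing $f^{n+m}=f^n\circ f^m$, so that $(f^{n+m})^*=(f^m)^*\circ(f^n)^*$, and passing through a graph resolution $\Gamma_m\to X$ of $f^m$ with projections $p_m,g_m:\Gamma_m\to X$, the projection formula gives
\[
\deg_p(f^{n+m}|\pi)=\deg\bigl((f^n)^*(\omega_X^p)\cdot(f^m)_*F\bigr),\qquad (f^m)_*F:=(g_m)_*p_m^*F.
\]
A second application of the projection formula shows that $(f^m)_*F$ is an effective $p$-dimensional cycle whose total degree $\deg\bigl((f^m)_*F\cdot\omega_X^p\bigr)$ equals $\deg\bigl(F\cdot(f^m)^*(\omega_X^p)\bigr)=\deg_p(f^m|\pi)$. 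Moreover, the semi-conjugacy inherited on $\Gamma_m$ in the form $\pi\circ g_m=g^m\circ\pi\circ p_m$ forces the support of $(f^m)_*F$ to lie inside $\pi^{-1}\bigl((g^m)_*(\omega_Y^l)\bigr)$, i.e.\ in a finite union of fibers of $\pi$.

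The estimate now reduces to the following refined intersection bound: if $V$ is an effective codimension-$p$ cycle on $X$ and $W$ is an effective dimension-$p$ cycle on $X$ whose support lies in $\pi^{-1}(S)$ for a generic zero-cycle $S\subset Y$, then
\[
\deg(V\cdot W)\;\leq\;C_0\cdot\deg^{\mathrm{rel}}(V)\cdot\deg(W),\qquad \deg^{\mathrm{rel}}(V):=\deg(V\cdot F),
\]
with $C_0$ depending only on $X$, $Y$, $\pi$ and the embeddings. Applied with $V=(f^n)^*(\omega_X^p)$, for which $\deg^{\mathrm{rel}}(V)=\deg_p(f^n|\pi)$, and $W=(f^m)_*F$, for which $\deg(W)=\deg_p(f^m|\pi)$, this yields the desired submultiplicativity. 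To prove the bound itself, restrict both $V$ and $W$ to each fiber $X_y=\pi^{-1}(y)$, $y\in S$; for generic $y$ one has $\deg(V|_{X_y})=\deg^{\mathrm{rel}}(V)/\deg(Y)$ by Lemma \ref{LemmaRDD2}, while Lemma \ref{LemmaDegreeOfIntersections} applied inside $X_y\subset\mathbb{P}^N_K$ gives a fibrewise Bezout-type bound on $\deg(V|_{X_y}\cdot W_y)$ that is uniform for $y$ in a Zariski-open dense subset of $Y$; summing over $y\in S$ and using $\sum_y\deg(W_y)=\deg(W)$ produces the assertion.

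The main obstacle is securing the uniformity of $C_0$ with respect to the varying zero-cycles $S=(g^m)_*(\omega_Y^l)$ as $m$ ranges over all positive integers. The point is that Lemma \ref{LemmaRDD2} allows us to replace $\omega_Y^l$ by any other generic complete intersection without changing any of the quantities under consideration; consequently one may, for each individual $m$, choose $\omega_Y^l$ so that $(g^m)_*(\omega_Y^l)$ lies in the Zariski-open locus of $Y$ where the fibrewise Bezout estimate is uniform, whence $C_0$ depends only on $X$, $Y$ and $\pi$ (through $\deg(Y)$ and the generic fibre geometry), not on $m$.
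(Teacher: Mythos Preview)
Your overall strategy---prove a submultiplicative estimate and invoke Fekete---is the same as the paper's, and your ``refined intersection bound'' via a fibrewise application of Lemma~\ref{LemmaDegreeOfIntersections} is essentially a reformulation of the cone estimate the paper uses to obtain (\ref{EquationRDD1}). However, there is a genuine gap at the very first step.

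The assertion ``$(f^{n+m})^*=(f^m)^*\circ(f^n)^*$'' is \emph{false} for rational maps; this failure is precisely why dynamical degrees must be defined as limits in the first place. Consequently the displayed equality
\[
\deg_p(f^{n+m}|\pi)\;=\;\deg\bigl((f^n)^*(\omega_X^p)\cdot(f^m)_*F\bigr)
\]
is wrong in general. A concrete counterexample (in the absolute case $Y=\mathrm{pt}$, $p=1$): for the standard Cremona involution $f$ on $\mathbb{P}^2$ one has $f^2=\mathrm{id}$, so $\deg_1(f^2)=1$, whereas $(f)^*L$ and $(f)_*L$ are both conics and their intersection number is $4$. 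Your projection-formula computation is internally consistent, but it computes $\deg\bigl((f^m)^*(f^n)^*(\omega_X^p)\cdot F\bigr)$, not $\deg\bigl((f^{n+m})^*(\omega_X^p)\cdot F\bigr)$.

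What is true---and what suffices for the argument---is the \emph{inequality}
\[
\deg_p(f^{n+m}|\pi)\;\leq\;\deg\bigl((f^n)^*(\omega_X^p)\cdot(f^m)_*F\bigr),
\]
obtained by comparing the strict transform of $R:=(f^m)_*F$ under $f^n$ with its total transform on a resolution $\Gamma_n$ of the graph of $f^n$: one has $\overline{p_n^{-1}(R\setminus\mathcal B)}\leq p_n^*R$, hence after intersecting with $g_n^*(\omega_X^p)$ and pushing forward, the desired bound. This is exactly why the paper works throughout with $\deg'_p$ and with strict transforms $f^n_0$, $f^m_0$ (for which the composition identity $f^{n+m}_0=f^n_0\circ f^m_0$ does hold on a suitable open set), rather than with the graph-pullback operators. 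Once you replace your equality by this inequality and justify it via strict transforms, the remainder of your argument goes through and is equivalent to the paper's.
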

\begin{proof}
By Lemma \ref{LemmaRDD3}, we only need to show that the first limit exists. To this end, it suffices to show that there is a constant $C>0$ independent of $m,n$ and $f$ such that
\begin{eqnarray*}
\deg '_p(f^{n+m}|\pi )\leq C\deg '_p(f^n|\pi ).\deg '_p(f^m|\pi ).
\end{eqnarray*}

We define $R=$ the closure of $f^n_0(\omega _X^{k-l-p}\wedge (\pi ^{-1}(\omega _Y^l)-\mathcal{B}))$, where $\mathcal{B}$ is a proper subvariety of $X$, depending on $f^n$, $f^m$ and $f^{n+m}$, so that the strict transform $f^m_0(R)$ is well-defined and equals the closure of $f^{n+m}_0(\omega _X^{k-l-p}\wedge (\pi ^{-1}(\omega _Y^l)-\mathcal{B}))$. By choosing $\omega _X^{k-l-p}$ and $\omega _Y^l$ generically, we can assume that $R$ is a subvariety of codimension $k-l-p$ of $\pi ^{-1}(g_*^n(\omega _Y^l))$ and has proper intersections with the "bad" set $\mathcal{B}$, using the property $\pi \circ f=g\circ \pi$. We note that by definition the degree of $R$ is $\deg '_p(f^n|\pi )$. 

We observe that in $N^*(X)$:
\begin{equation}
R\leq C\deg (R)\omega _X^{k-l-p}\wedge \pi ^{-1}(g_*^n(\omega _Y^l)),
\label{EquationRDD1}\end{equation}
here $C>0$ is a positive constant independent of $R$, $m,n$ and the maps $\pi$, $f$ and $g$. In fact, consider the embedding $\pi ^{-1}(g_*^n(\omega _Y^l))\subset \mathbb{P}^N$ induced from the embedding $X\subset \mathbb{P}^N$. By the Sard-Bertini's theorem on fields of characteristic zero,  $\pi ^{-1}(g_*^n(\omega _Y^l))$ is smooth (but may be reducible), we can apply the estimates from the usual Chow's moving lemma (see Section 2). Because the degree of $R$ is the same when considered as either a subvariety of $X$ or of $\pi ^{-1}(g_*^n(\omega _Y^l))$ (since we are embedding both of them into the same $\mathbb{P}^N$), we then obtain the desired inequality, with $C=l \deg (\pi ^{-1}(g_*^n(\omega _Y^l)))^l$. We note that $\omega _Y^l$ is a union of $\deg (Y)$ points, therefore $g_*^n(\omega _Y^l)$ is a union of $\deg (Y)$ points $y_1,\ldots ,y_{\deg (Y)}$. Each point $y_j$ belongs to a generic $\omega _{Y,j}^l$, and hence 
\begin{eqnarray*}
\deg (\pi ^{-1}(g_*^n(\omega _Y^l)))\leq \sum _{j=1}^{\deg (Y)}\deg (\pi ^{-1}(g_*^n(\omega _{Y,j}^l))=\deg (Y)C_1,
\end{eqnarray*}
where $C_1=\deg (\pi ^{-1}(g_*^n(\omega _{Y,j}^l))$ is independent of $j$. Hence the inequality (\ref{EquationRDD1}) is proved. 

{\bf Remark.} Here we give an alternative proof, which does not use the fact that $\pi ^{-1}(g_*^n(\omega _Y^l))$ is smooth, and hence is valid over a field of positive characteristic. Regardless whether $\pi ^{-1}(g_*^n(\omega _Y^l))$ is smooth or not, it has the correct dimension $k-l$. We then can use Lemma 2 in \cite{roberts}, to have that if $L\subset \mathbb{P}^N$ is any linear subspace of dimension $n-(k-l)-1$ and $C_L(R)$ is the cone over $R$ with vertex $L$, then $C_L(R)\cap \pi ^{-1}(g_*^n(\omega _Y^l))$ has the correct dimension $\dim (R)$. In particular, since $R$ is one component of $C_L(R)\cap \pi ^{-1}(g_*^n(\omega _Y^l))$ and the degree of $C_L(R)$ is bounded from above by $C\deg (R)$, we obtain Equation (\ref{EquationRDD1}). 

Using the same arguments as in the proof of Lemma \ref{LemmaDegreeOfStrictTransform}, we can show moreover that  in $N^*(X)$:
\begin{equation}
f^m_0(R)\leq C\deg (R)f^m_0(\omega _X^{k-l-p}\wedge \pi ^{-1}(g_*^n(\omega _Y^l))).
\label{EquationRDD2}\end{equation}
This completes the proof of Theorem \ref{TheoremRDD4}, by noting that $\pi ^{-1}(g_*^n(\omega _Y^l))$ is contained in at most $\deg (Y)$ fibers of the form $\pi ^{-1}(\omega _Y^l)$.
\end{proof}

Now we proceed to proving the log-concavity of relative dynamical degrees. We note that in the case the underlying field is $\mathbb{C}$, our proof here is different from and simpler than that used in Proposition 3.6 of \cite{dinh-nguyen}. 

\begin{lemma} The relative dynamical degrees are log-concave, that is $\lambda _{p+1}(f|\pi )\lambda _{p-1}(f|\pi )\leq \lambda _p(f|\pi )^2$ for all $1\leq p\leq k-l-1$. Moreover, for all $0\leq p\leq k-l$ we have $\lambda _p(f|\pi )\geq 1$. 
\label{LemmaLogConcavityRelativeDynamicalDegrees}\end{lemma}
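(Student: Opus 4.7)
The plan is to mirror the proof of Lemma \ref{LemmaBoundForDynamicalDegrees}: write $\deg_p(f^n|\pi)$ as a mixed intersection number of nef classes on a resolution $\Gamma$ of the graph of $f^n$, and then deduce log-concavity from the Grothendieck-Hodge index theorem after restricting to a smooth projective surface via Bertini.

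For the lower bound $\lambda_p(f|\pi)\geq 1$, I would use the alternative description $\deg_p(f^n|\pi)=\deg'_p(f^n|\pi)$ from Lemma \ref{LemmaRDD3}. This is the degree of the closure of $f^n_0(\omega_X^{k-l-p}\cap (\pi^{-1}(\omega_Y^l)-\mathcal{B}))$, which is a non-empty effective $p$-cycle by a generic-position argument: $\omega_X^{k-l-p}\cap \pi^{-1}(\omega_Y^l)$ has dimension $p$ (because $\omega_X$ restricted to the $(k-l)$-dimensional variety $\pi^{-1}(\omega_Y^l)$ remains ample), and for a generic choice of $\omega_X^{k-l-p}$ it avoids the bad set $\mathcal{B}$ and is not collapsed by $f^n$. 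Its degree is therefore a positive integer, so $\lambda_p(f|\pi)\geq 1$. The case $p=0$ is immediate: $\deg_0(f^n|\pi)=\omega_X^{k-l}\cdot \pi^{-1}(\omega_Y^l)$ is independent of $n$, so $\lambda_0(f|\pi)=1$.

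For log-concavity, fix $n$ and let $q,r:\Gamma\rightarrow X$ be the two projections from a resolution of singularities of the graph of $f^n$, with $q$ birational and $r=f^n\circ q$ regular. The projection formula rewrites
\[
\deg_p(f^n|\pi) \;=\; \alpha^p\cdot \beta^{k-l-p}\cdot \gamma^l
\]
on $\Gamma$, where the three classes $\alpha=r^*\omega_X$, $\beta=q^*\omega_X$, $\gamma=(\pi\circ q)^*\omega_Y$ are all nef divisors on $\Gamma$. The sought log-concavity reduces to the mixed intersection inequality
\[
(\alpha^p\cdot \beta^{k-l-p}\cdot \gamma^l)^2 \;\geq\; (\alpha^{p-1}\cdot \beta^{k-l-p+1}\cdot \gamma^l)\cdot(\alpha^{p+1}\cdot \beta^{k-l-p-1}\cdot \gamma^l).
\]
By the standard perturbation $\alpha\mapsto m\alpha+\omega_\Gamma$ and similarly for $\beta,\gamma$ (with $\omega_\Gamma$ ample on $\Gamma$ and $m$ large), I may assume $\alpha,\beta,\gamma$ are very ample, recovering the original inequality by continuity of the intersection pairing on $N^*_{\mathbb{R}}(\Gamma)$. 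In the very ample case, Bertini in characteristic zero yields a smooth irreducible projective surface $W\subset \Gamma$ representing the class $\alpha^{p-1}\cdot \beta^{k-l-p-1}\cdot \gamma^l$. On $W$ the three terms in the inequality restrict to $\alpha|_W\cdot \beta|_W$, $(\beta|_W)^2$, and $(\alpha|_W)^2$ respectively, so the claim becomes $(\alpha|_W\cdot \beta|_W)^2\geq (\alpha|_W)^2\cdot (\beta|_W)^2$, which is the Grothendieck-Hodge index theorem on $W$ applied to the ample divisors $\alpha|_W,\beta|_W$. Taking $n$-th roots and letting $n\to\infty$ yields the desired log-concavity.

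The main obstacle will be the Bertini/approximation step: one must verify that after perturbing by an ample class and passing to suitable integer multiples, the complete intersection can be chosen to be a smooth irreducible surface, and that continuity of intersection numbers allows safe passage back to the original nef classes. This is standard in characteristic zero but requires some care with basepoint-freeness and dimension counts. The remaining steps (projection formula, restriction to $W$, Hodge index on a surface) are routine given the framework already developed in the paper.
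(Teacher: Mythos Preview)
Your proposal is correct and follows essentially the same approach as the paper: both rewrite $\deg_p(f^n|\pi)$ as an intersection of three nef divisor classes on a resolution of the graph (the paper's $\pi_2^*(\omega_X)$, $\pi_1^*(\omega_X)$, $(\pi\circ\pi_1)^*(\omega_Y)$ are exactly your $\alpha,\beta,\gamma$), perturb to ample classes, and reduce via Bertini to the Grothendieck--Hodge index theorem on a surface, while the lower bound in both cases comes from the relative degree being a positive integer. Your write-up is somewhat more explicit about the Bertini and continuity steps, but there is no substantive difference.
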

\begin{proof}
The fact that $\lambda _p(f|\pi )\geq 1$ follows easily from that the numbers $\deg _p(f^n|\pi )$ are the degrees of certain subvarieties of $X$, hence must be positive integers. 

Now we prove the log-concavity of relative dynamical degrees. To this end, it suffices to show that 
\begin{eqnarray*}
\deg _{p+1}(f|\pi )\deg _{p-1}(f|\pi )\leq \deg _p(f|\pi )^2. 
\end{eqnarray*}
Let $\Gamma$ be a resolution of singularities of the graph of $f$, and let $\pi _1,\pi _2:\Gamma \rightarrow X$ be the two projections. Then, by definition, we have
\begin{eqnarray*}
\deg _p(f|\pi )=f^*(\omega _X^p).\pi ^*(\omega _Y^l).\omega _X^{k-l-p}=\pi _2^*(\omega _X)^p.(\pi \circ \pi _1)^*(\omega _Y)^l.\pi _1^*(\omega _X)^{k-l-p}.
\end{eqnarray*}
This is an intersection of nef classes on $\Gamma$. Therefore, we can approximate them by $ \mathbb{Q}$-ample classes, and use the usual Grothendieck-Hodge index theorem to prove the log-concavity, as in the proof of  Lemma \ref{LemmaBoundForDynamicalDegrees}.
\end{proof}

\subsection{The general case: $\pi$ is a rational map} Here we use the results in the previous subsection to define relative dynamical degrees in the general case where the map $\pi :X\rightarrow Y$ is merely a rational map. To this end, we proceed to showing that relative dynamical degrees are birational invariants. 

Let $f:X\rightarrow X$, $g:Y\rightarrow Y$ and $\pi :X\rightarrow Y$ be dominant rational maps such that $\pi \circ f=g\circ \pi$. That is, $f$ and $g$ are semi-conjugate maps. Applying Hironaka's theorem, we can make a birational model $f_1:X_1\rightarrow X_1$ of $f$ such that the corresponding map $\pi _1:X_1\rightarrow Y$ is regular. Then, as in the previous subsection we can define relative dynamical degrees $\lambda _p(f_1|\pi _1)$ for the map $f_1$. We would like to define relative dynamical degrees of $f$ to be those of $f_1$. To this end, it suffices to show that if $f_2:X_2\rightarrow X_2$ is such another birational model of $f$ (i.e. $\pi _2:X_2\rightarrow Y$ is also a regular map), then $\lambda _p(f_1|\pi _1)=\lambda _p(f_2|\pi _2)$. This is the content of the next result. 

\begin{lemma}
Notations are as in the previous paragraph. Then for every $0\leq p\leq k-l$ we have
\begin{eqnarray*}
\lambda _p(f_1|\pi _1)=\lambda _p(f_2|\pi _2).
\end{eqnarray*}
\label{LemmaRelativeDynamicalDegreesGeneralCase}\end{lemma}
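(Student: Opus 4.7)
Using Hironaka's theorem, resolve the graph of the birational map $X_1 \to X_2$ to obtain a smooth projective variety $X_3$ together with birational morphisms $\sigma_i \colon X_3 \to X_i$ ($i=1,2$) satisfying $\pi_1 \circ \sigma_1 = \pi_2 \circ \sigma_2$; call this common composition $\pi_3$, which is regular. Let $f_3 \colon X_3 \to X_3$ be the induced birational model of $f$, so that $\pi_3 \circ f_3 = g \circ \pi_3$, and the relative dynamical degrees $\lambda_p(f_3|\pi_3)$ are then defined via Theorem \ref{TheoremRDD4}. By symmetry it suffices to prove $\lambda_p(f_1|\pi_1) = \lambda_p(f_3|\pi_3)$.

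\textbf{Two-sided comparison of the degree sequences.} I would work with the alternative quantity $\deg'_p(f_i^n|\pi_i)$ of Lemma \ref{LemmaRDD3}, which is the degree of a specific strict-transform subvariety rather than a numerical intersection. The geometric observation is that, because $\sigma_1$ is a birational morphism with $\pi_1 \circ \sigma_1 = \pi_3$, a generic fiber $\pi_3^{-1}(\omega_Y^l) \subset X_3$ maps birationally under $\sigma_1$ onto the corresponding generic fiber $\pi_1^{-1}(\omega_Y^l) \subset X_1$; moreover, $\sigma_1 \circ f_3^n = f_1^n \circ \sigma_1$ as rational maps, so the $\sigma_1$-pushforward of the $X_3$-strict-transform cycle agrees off a lower-dimensional set with the $X_1$-strict-transform cycle. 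Combining this with the composition estimate of Lemma \ref{LemmaDegreeOfCompositionMaps} applied to both factorizations of $\sigma_1 \circ f_3^n = f_1^n \circ \sigma_1$, and with the fact that pushforward under the birational morphism $\sigma_1$ preserves the degree of a cycle up to a universal factor comparing $\omega_{X_3}^{k-l-p}$ with $\sigma_1^* \omega_{X_1}^{k-l-p}$, I expect to derive a uniform-in-$n$ estimate
\[
C^{-1}\,\deg'_p(f_1^n|\pi_1) \;\leq\; \deg'_p(f_3^n|\pi_3) \;\leq\; C\,\deg'_p(f_1^n|\pi_1),
\]
with $C>0$ depending only on $\sigma_1$ and the embeddings. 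Taking $n$-th roots and invoking Theorem \ref{TheoremRDD4} then yields $\lambda_p(f_1|\pi_1) = \lambda_p(f_3|\pi_3)$.

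\textbf{Main obstacle.} The subtle point is the non-functoriality of pullback for rational maps: in general $(f_3^n)^* \circ \sigma_1^* \neq (\sigma_1 \circ f_3^n)^*$ on $N^\ast$, so one cannot simply transport intersections from $X_3$ to $X_1$ by formal cohomological pullback. This is precisely why my plan relies on the strict-transform formulation $\deg'_p$ rather than $\deg_p$: pushforward of cycles by the birational morphism $\sigma_1$ is well-behaved at the level of subvarieties, and the exceptional locus of $\sigma_1$ is swept out by generic choice of the slicing classes $\omega_Y^l$ and $\omega_{X_3}^{k-l-p}$. The equality $\deg'_p = \deg_p$ of Lemma \ref{LemmaRDD3} then transports the comparison back to the definition used in the statement of Theorem \ref{TheoremRDD4}. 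The technical heart of the argument is to choose the ``bad set'' $\mathcal{B}$ so that it simultaneously contains the indeterminacy loci of $\sigma_i^{\pm 1}$, of the iterates $f_j^n$ for $j = 1, 3$ and all relevant $n$, and of the fibrations $\pi_i$, and so that the generic-fiber and strict-transform operations commute modulo cycles of strictly smaller dimension which do not contribute to the limiting degree.
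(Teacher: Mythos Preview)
Your proposal is correct and follows essentially the same strategy the paper intends: the paper's own proof is only a two-line sketch that invokes the birational conjugacy $f_2=\tau^{-1}\circ f_1\circ\tau$ and then defers to the argument of Proposition~3.5 in \cite{dinh-nguyen}, which is precisely the two-sided degree comparison you outline. Your choice to pass through a common dominating resolution $X_3$ (rather than work directly with the rational $\tau$) and to run the comparison at the level of the strict-transform quantity $\deg'_p$ is a clean way to organise that argument, and it is exactly how one would fill in the details the paper omits; in particular, your identification of the non-functoriality of rational pullback as the obstacle, and your remedy via strict transforms and a suitably enlarged bad set~$\mathcal{B}$, matches the mechanism underlying Lemmas~\ref{LemmaDegreeOfStrictTransform} and~\ref{LemmaDegreeOfCompositionMaps} on which the paper's sketch implicitly relies.
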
   
\begin{proof}
Since $f_1$ and $f_2$ are two models of $f$, there is a birational map $\tau :X_2\rightarrow X_1$ such that $f_2=\tau ^{-1}\circ f_1\circ \tau$. Then using the arguments in the proof of Lemma \ref{LemmaBoundForDynamicalDegrees}, we can proceed as in the proof of Proposition 3.5 in \cite{dinh-nguyen} to complete the proof of the lemma. 
\end{proof}
 
\section{Dynamical degrees of semi-conjugate maps}
In this section we use the results in the previous sections to prove the "product formula" relating dynamical degrees of semi-conjugate maps, that is Theorem \ref{TheoremProductFormula}. As before, we consider dominant rational maps $f:X\rightarrow X$, $g:Y\rightarrow Y$ and $\pi :X\rightarrow Y$ for which $\pi \circ f=g\circ \pi$. Here $X$ and $Y$ are smooth projective varieties over an algebraic closed field of characteristic zero, of dimensions $k$ and $l$ correspondingly. 

We will adapt the arguments in Section 4 in \cite{dinh-nguyen}. To this end, it suffices that we have a refined Chow's moving lemma, adapted to a product of varieties $X\times Y$ (compare with  Propositions 2.3 and 2.4 in \cite{dinh-nguyen}). The following lemma is sufficient for estimating the degrees of strict transforms and strict intersections of varieties.  
\begin{lemma}
Let $X$ and $Y$ be smooth projective varieties over an algebraic closed field of characteristic zero. Let $\omega _X$ and $\omega _Y$ be generic hyperplane sections on $X$ and $Y$. Given $V$ a subvariety of $X\times Y$. Then, there exists a constant $A>0$ such that any subvariety $W\subset X\times Y$  can be written as 
\begin{eqnarray*}
W=W_{+}-W_{-},
\end{eqnarray*}
where $W_+$ is a member of a pencil of codimension $p$ varieties $\mathcal{W}_+$ whose generic members intersect $V$ properly, and $W_{-}$ is a subvariety of $X\times Y$.  Moreover, in $N^*(X\times Y)$
\begin{eqnarray*}
W_{+}\leq A\sum _{0\leq j\leq p, ~0\leq p-j\leq k-l}\alpha _j(W)\omega _X^j.\omega _Y^{p-j}. 
\end{eqnarray*} 
Here $\alpha _j(W)=W.\omega _X^{k-p+j}.\omega _Y^{l-j}$.
\label{LemmaChowMovingForProduct}\end{lemma}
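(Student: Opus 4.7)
The plan is to adapt the version of Chow's moving lemma recalled in Section 2 (following Roberts) to the bi-projective setting, tracking the multidegrees $\alpha_j(W)$ rather than only the total degree. Fix embeddings $X\subset \mathbb{P}^{N_1}_K$ and $Y\subset \mathbb{P}^{N_2}_K$ so that $\omega_X$ and $\omega_Y$ are restrictions of hyperplane classes; then $X\times Y$ sits inside $\mathbb{P}^{N_1}\times \mathbb{P}^{N_2}$, and the numbers $\alpha_j(W)=W\cdot \omega_X^{k-p+j}\cdot \omega_Y^{l-j}$ are precisely the multidegrees of $W$ with respect to this product embedding.

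I will introduce two families of cone constructions, one in each factor. Given a linear subspace $L_1\subset \mathbb{P}^{N_1}$ with $L_1\cap X=\emptyset$, form the fiberwise cone $C_{L_1}(W)$: over each $y\in Y$ with $W\cap (X\times\{y\})\neq \emptyset$, take the cone in $\mathbb{P}^{N_1}\times\{y\}$ with vertex $L_1$ over $W\cap (X\times\{y\})$. Intersect with $X\times Y$. Define the analogous "vertical" cones $C_{L_2}(W)$ using $L_2\subset \mathbb{P}^{N_2}$ with $L_2\cap Y=\emptyset$. Both families satisfy the properties used in Roberts' argument: $C_{L_i}(W)\cdot (X\times Y) - W$ is effective, and for generic choice of $L_i$ the excess of the correction term relative to $V$ strictly decreases. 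Iterating as in the proof of Chow's moving lemma recalled above, after at most $k+l$ steps one obtains a decomposition
\[
W = (-1)^e W_e + \sum_{i=1}^{e}(-1)^{i-1}C_{L_i}(W_{i-1})\cdot (X\times Y),
\]
where each $L_i$ is taken in $\mathbb{P}^{N_1}$ or $\mathbb{P}^{N_2}$ as dictated by the excess reduction, and $W_e$ has zero excess. Moving by a generic pair $(g_1,g_2)\in Aut(\mathbb{P}^{N_1})\times Aut(\mathbb{P}^{N_2})$ joined to the identity by a rational curve produces the desired pencil $\mathcal{W}_+$ whose generic members intersect $V$ properly; setting $W_+$ equal to the sum of the positive contributions in the decomposition, and $W_- = W_+ - W$, gives the decomposition $W = W_+ - W_-$ required by the lemma, with $W_-$ effective by construction.

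The remaining task is to control the multidegrees of $W_+$. The crucial step is a bi-projective refinement of the identity $\deg(C_L(Z)) = \deg(Z)$ (Example 18.17 in Harris): for a linear subspace $L_1\subset \mathbb{P}^{N_1}$, the class in the Chow ring of $\mathbb{P}^{N_1}\times Y$ of the fiberwise cone $C_{L_1}(W)$ expands as a non-negative combination of $\omega_X^a\cdot \omega_Y^b$ whose coefficients are linear functions of the multidegrees $\alpha_j(W)$, with entries depending only on $k$, $l$ and $\dim L_1$. Intersecting back with $X\times Y$ contributes factors bounded by $\deg(X)$ and $\deg(Y)$. Iterating through the (at most $k+l$) cone steps and collecting the bounds yields
\[
W_+ \leq A\sum_{\substack{0\leq j\leq p\\ 0\leq p-j\leq k-l}}\alpha_j(W)\,\omega_X^j\cdot \omega_Y^{p-j},
\]
with $A$ depending only on $X$, $Y$ and $p$.

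The principal obstacle is exactly this last degree-tracking step: the classical cone invariance is one-dimensional in nature, and upgrading it to a multidegree identity on a product variety requires a careful analysis of how a fiberwise cone in one factor interacts with hyperplane pullbacks from the other. This is the algebraic counterpart of Propositions 2.3 and 2.4 in Dinh–Nguyen, where the Kähler setting allowed the use of regularisation of positive closed currents; here the argument must be carried out purely by intersection theory, which is the main technical content of the proof.
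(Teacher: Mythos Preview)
Your strategy is genuinely different from the paper's, and the obstacle you flag at the end is a real gap, not a formality. Two concrete issues. First, your fiberwise cone $C_{L_1}(W)$ is assembled from the slices $W\cap(X\times\{y\})$, whose dimensions jump over special $y$; you have not verified that the result is a variety of pure codimension~$p$, nor that Roberts' excess-reduction (the ``main lemma'' of \cite{roberts}) survives in this relative form --- that lemma is proved for cones in a single projective space and does not transfer automatically. Second, the claimed expansion of $[C_{L_1}(W)]$ as a non-negative combination of $\omega_X^a\omega_Y^b$ with coefficients linear in the $\alpha_j(W)$ is exactly the multidegree identity you set out to prove; asserting it is not proving it, and in the ambient $A^*(\mathbb{P}^{N_1}\times Y)$ you name, the classes $\omega_X^a\omega_Y^b$ do not even span (Künneth gives $A^*(\mathbb{P}^{N_1})\otimes A^*(Y)$, and $A^*(Y)$ can be large).

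The paper sidesteps both problems by moving the diagonal instead of $W$. With $Z=X\times Y$ embedded in $(\mathbb{P}^N)^4$, one writes $W=(\pi_1)_*\bigl(\pi_2^{-1}(W)\cap\Delta_Z\bigr)$ and applies the \emph{ordinary} Chow moving lemma to $\Delta_Z$ inside $(\mathbb{P}^N)^4$, obtaining $\Delta_Z=\Delta_+-\Delta_-$ where $\Delta_+$ is the restriction to $Z\times Z$ of a subvariety of $(\mathbb{P}^N)^4$. Because the Chow ring of $(\mathbb{P}^N)^4$ is generated by the four hyperplane classes (Künneth, Example~8.3.7 in \cite{fulton}), one immediately gets $\Delta_+\le A\sum_j\pi_1^*(\omega_X+\omega_Y)^j\,\pi_2^*(\omega_X+\omega_Y)^{k+l-j}$; setting $\mathcal{W}_+=(\pi_1)_*\bigl(\pi_2^{-1}(W)\cap\mathcal{D}_+\bigr)$ and pushing forward produces precisely the $\alpha_j(W)$ as coefficients. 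All the multidegree bookkeeping you were attempting by iterated bi-projective cones collapses to one invocation of Künneth for a product of projective spaces, applied once to a fixed cycle $\Delta_Z$ independent of $W$.
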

 \begin{proof}
We will follow the ideas in Section 2 in \cite{dinh-nguyen-truong1}. Let us define $Z=X\times Y$. Let $X,Y$ be embedded into the same projective space $\mathbb{P}^N$. Let $\Delta _Z\subset Z\times Z$ be the diagonal, and let $\pi _1,\pi _2:Z\times Z\rightarrow Z$ be the projections. For simplicity, let us use  $\omega _X$ and $\omega _Y$ to denote also the pullbacks,  via the projections $Z=X\times Y\rightarrow X,Y$, of $\omega _X$ and $\omega _Y$ to $Z$. 

We note that $\pi _2^{-1}(W)$ intersects properly both $\pi _1^{-1}(V)$ and $\Delta _Z$ in $Z\times Z$, and moreover  
\begin{eqnarray*}
W=(\pi _1)_*(\pi _2^{-1}(W)\cap \Delta _{Z}).
\end{eqnarray*}
Hence, the proof of the lemma is finished if we can show that $\Delta _Z=\Delta _{+}-\Delta _{-}$ with the following three properties 

i) $\Delta _{+}$ is a member of a pencil  $\mathcal{D}_{+}$ of subvarieties of $Z\times Z$, and $\Delta _{-}$ is a subvariety of $Z$ intersecting properly with $\pi _2^{-1}(W)$.

ii) All members of $\mathcal{D}_+$ intersect $\pi _2^{-1}(W)$ properly. A generic member of $\mathcal{D}_{+}$ intersects $\pi _1^{-1}(V)\cap \pi _2^{-1}(W)$ properly.

iii) In $N^*(Z\times Z)$ we have
\begin{eqnarray*}
\Delta _{+}\leq A\sum _j\pi _1^*(\omega _X+\omega _Y)^j\wedge \pi _2^*(\omega _X+\omega _Y)^{k+l-j},
\end{eqnarray*}
for some positive constant $A$. (We note that when we expand the right hand side of the above inequality in terms of monomials in $\pi _{1}^{-1}(\omega _{X})$, $\pi _{2}^{-1}(\omega _{X})$,  $\pi _{1}^{-1}(\omega _{Y})$ and $\pi _{2}^{-1}(\omega _{Y})$, several terms will vanish since $\omega _X^{k+1}=0$ and $\omega _Y^{l+1}=0$.)

In fact, assume that we have the properties i), ii) and iii). Then we just need to choose $\mathcal{W}_+=(\pi _1)_*(\pi _2^{-1}(W)\cap \mathcal{D}_+)$ and $W_{-}=(\pi _1)_*(\pi _2^{-1}(W)\cap \Delta _{-})$.

Now we proceed to constructing $\Delta _{\pm}$ satisfying the properties i)-iii). We imbed $Z\times Z$ into $(\mathbb{P}^N)^4$ by using the embeddings $X,Y\subset \mathbb{P}^N$. By Chow's moving lemma, we can write 
\begin{eqnarray*}
\Delta _Z=\Delta _1-\Delta _2\pm \Delta _0,
\end{eqnarray*}
where $\Delta _1$ and $\Delta _2$ are the intersections of $Z\times Z$ with certain subvarieties of $(\mathbb{P}^N)^4$, and $\Delta _0$ is a subvariety of $Z\times Z$ intersecting properly both $\pi _2^{-1}(W)$ and $\pi _1^{-1}(V)\cap \pi _2^{-1}(W)$.  

If $\Delta _Z=\Delta _{1}-\Delta _{2}-\Delta _0$, then we let $\Delta _+=\Delta _1$ and $\Delta _{-}=\Delta _2+\Delta _0$. 

If $\Delta _Z=\Delta _{1}-\Delta _{2}+\Delta _0$ then we proceed as follows. We choose $L\subset (\mathbb{P}^N)^4$ a generic linear subspace of appropriate dimension. Let $C_L(\Delta _0)\subset (\mathbb{P}^N)^4$ be the cone over $Z$ with the vertex $L$. Then $C_L(\Delta _0)$ intersects $Z\times Z$ properly, and $\Delta _0$ is one component of $C_L(\Delta _0)\cap (Z\times Z)$. Then we define $\Delta _+=\Delta _1+C_L(\Delta _0)\cap (Z\times Z)$ and $\Delta _{-}=\Delta _2+[C_L(\Delta _0)\cap (Z\times Z)-\Delta _0]$. 

In any case, $\Delta _+$ is the intersection of $Z\times Z$ with a subvariety of $(\mathbb{P}^N)^4$ (note that in general this property is not true for $\Delta _{-}$). Hence, we can put $\Delta _+$ into a pencil $\mathcal{D}_+$. Here, $\mathcal{D}_+$ is the intersection of $Z\times Z$ with a pencil on $(\mathcal{P}^N)^4$. If we choose that pencil generic enough, we see that two properties i) and ii) that we required in the above are satisfied.  We note that here all members of $\mathcal{D}_+$ intersect $\pi _2^{-1}(W)$ properly since $\Delta _Z$ intersects $\pi _2^{-1}(W)$ properly (c.f. the construction used in the proof of Chow's moving lemma). Similarly, $\Delta _{-}$ intersects properly $\pi _2^{-1}(W)$. The last property iii) is also satisfied since the Chow's ring of $(\mathbb{P}^N)^4$ satisfies the Kunneth's formula (see Example 8.3.7 in \cite{fulton}).   
 \end{proof}
 
 \section{Surfaces and threefolds over a field of positive characteristic}

From the proofs of our results in the previous sections, it follows that whenever resolutions of singularities are available (or more generally if appropriate birational models of the maps under consideration are available), the results in the previous sections extend. In particular, since resolutions of singularities for surfaces (see \cite{zariski1, abhyankar1}) and threefolds (see \cite{zariski2, abhyankar2, cutkosky, cossart-piltant1, cossart-piltant2}) over a field of positive characteristic are available, this observation applies. We will consider these cases in more details in the below.

Let $X,Y$ be projective varieties over a field of positive characteristic. Let $f:X\rightarrow X$, $g:Y\rightarrow Y$ and $\pi :X\rightarrow Y$ be dominant rational maps such that $\pi \circ f=g\circ \pi$.  

\subsection{The case of surfaces}

We consider the case where $X$ is a surface. 

For relative dynamical degrees, the only non-trivial case is when $Y$ is a curve. In this case, there are only two relative dynamical degrees $\lambda _0(f|\pi )=1$ and $\lambda _1(f|\pi )$. Hence, the log-concavity for relative dynamical degrees becomes vacuous. The "product formula" becomes
\begin{eqnarray*}
\lambda _1(f)&=&\max \{\lambda _1(g),\lambda _1(f|\pi )\},\\
\lambda _2(f)&=&\lambda _1(g)\lambda _1(f|\pi ).
\end{eqnarray*}
As a consequence, if $f$ has an invariant fibration over a curve $Y$, then $\lambda _2(f)\geq \lambda _1(f)$. In particular, if $f$ is birational (i.e. $\lambda _2(f)=1$) with positive entropy(i.e.  $\lambda _1(f)>1$), then it is automatically primitive. Primitive birational maps of surfaces with zero entropy (that is $\lambda _1(f)=1$) have been recently classified in \cite{oguiso}, using results from \cite{diller-favre}. 

\subsection{The case of threefolds}

We consider the case where $X$ is a threefold. For relative dynamical degrees, the only non-trivial case is when $Y$ has dimension $1$ or $2$. These two cases are similar. For the case $\dim (Y)=1$, the "product formula" becomes
\begin{eqnarray*}
\lambda _1(f )&=&\max\{\lambda _1(g), \lambda _1(f|\pi )\},\\
\lambda _2(f )&=&\max \{\lambda _1(g)\lambda _1(f|\pi ), \lambda _2(f|\pi )\},\\
\lambda _3(f )&=&\lambda _1(g)\lambda _2(f|\pi ). 
\end{eqnarray*}
Therefore, if $f$ has an invariant fibration over a curve, then $\lambda _2(f)\geq \lambda _1(f)$ and $\lambda _1(f)\lambda _3(f)\geq \lambda _2(f)$.


\begin{thebibliography}{xx} 

\bibitem{abhyankar1}{S. Abhyankar,} \textit{Local uniformization on algebraic surfaces over ground fields of characteristic $p\not= 0$,} Annals Math., 2nd series, 63 (3), 491--526.

\bibitem{abhyankar2}{S. Abhyankar,} \textit{Resolutions of singularities of embedded algebraic surfaces,} 2nd edition, Acad. Press, 1998.

\bibitem{blanc-cantat}{J. Blanc and Cantat,} \textit{Dynamical degrees of birational transformations of projective surfaces,} arXiv: 1307.0361.

\bibitem{cossart-piltant1}{V. Cossart and O. Piltant,} \textit{Resolutions of singularities of threefolds in positive characteristic. I. Reduction to uniformization on Artin-Schreier and purely inseparable coverings,} J. Algebra 320 (3), 1051--1082.

\bibitem{cossart-piltant2}{V. Cossart and O. Piltant,} \textit{Resolutions of singularities of threefolds in positive characteristic. II.,} J. Algebra 321 (7), 1836--1976.

\bibitem{cutkosky}{S. D. Cutkosky,} \textit{Resolution of singularities for $3$-folds in positive characteristic,} Amer. J. Math. 131 (1), 59--127.

\bibitem{diller-favre}{J. Diller and C. Favre,} \textit{Dynamics of bimeromorphic maps of surfaces,} Amer. J. Math. 123 (2001), no. 6, 1135--1169.

\bibitem{dinh-nguyen}{T.-C. Dinh and V.-A. Nguyen,} \textit{Comparison of dynamical degrees for semi-conjugate meromorphic maps,} Comment. Math. Helv. 
{\bf 86} (2011), no. 4, 817--840.

\bibitem{dinh-nguyen-truong1}{T.-C. Dinh, V.-A. Nguyen and T. T. Truong,} \textit{On the dynamical degrees of meromorphic maps preserving a fibration,} Commun. Contemp. Math., 14, 1250042, 2012. arXiv: 1108.4792.

\bibitem{dinh-sibony1}{T-C Dinh and N. Sibony,} \textit{Regularization of currents and entropy,} Ann. Sci. Ecole Norm. Sup. (4), 37 (2004), no 6, 959--971.

\bibitem{dinh-sibony10}{T-C Dinh and N. Sibony,} \textit{Une borne sup\'erieure de l'entropie topologique d'une application rationnelle,} Annals of Math., 161 (2005), 1637--1644. 

\bibitem{dinh-sibony3}{T-C Dinh and N. Sibony,} \textit{Pullback of currents by holomorphic maps,} Manuscripta Math. 123 (2007), no. 3, 357--371.

\bibitem{esnault-oguiso-yu}{H. Esnault, K. Oguiso  and X. Yu,} \textit{Automorphisms of elliptic $K3$ surfaces and Salem numbers of maximal degree,} arXiv: 1411.0769.

\bibitem{esnault-srinivas}{H. Esnault and V. Srinivas,} \textit{Algebraic versus topological entropy for surfaces over finite fields,} arXiv: 1105.2426.

\bibitem{fornaess-sibony1}{J. E. Fornaess and N. Sibony,} \textit{Complex dynamics in higher dimensions,} Notes partially written by Estela A. Gavosto. NATO Adv. Sci. Inst. Ser. C Math. Phys. Sci. 459, Complex potential theory (Montreal, PQ, 1993), 131--186, Kluwer Acad. Publ., Dordrecht, 1994. 


\bibitem{friedlander-lawson}{E. M. Friedlander and H. B. Lawson,} \textit{Moving algebraic cycles of bounded degree,} Invent. math. 132 (1998), 91--119. 

\bibitem{fulton}{W. Fulton,} \textit{Intersection theory,} 2nd edition, Springer-Verlag Berlin Heidelberg, 1998.

\bibitem{griffiths-harris}{P. Griffiths and J. Harris,} \textit{Principles of algebraic geometry,} 1978, John Wiley and Sons, Inc.

\bibitem{gromov}{M. Gromov,} \textit{On the entropy of holomorphic maps,} Enseign. Math. (2), {\bf 49} (2003) no 3--4, 217--235. Manuscript (1977).

\bibitem{grothendieck}{A. Grothendieck,} \textit{Sur une note de Mattuck-Tate,} J. reine angew Math. 20 (1958), 208--215.

\bibitem{guedj4}{V. Guedj,} \textit{Ergodic properties of rational mappings with large topological degrees,} Annals of Math. 161 (2005), 1589--1607.


\bibitem{harris}{J. Harris,} \textit{Algebraic geometry: a first course,} Springer-Verlag New York, 1992.

\bibitem{hironaka}{H. Hironaka,} \textit{Flattening of analytic maps,} Manifolds-Tokyo 1973 (Proc. International Conf., Tokyo 1973), Univ. Tokyo Press, 1975, pp. 313--321.

\bibitem{ishii-milman}{S. Ishii and P. Milman,} \textit{The geometric minimal models of analytic spaces,} Math. Ann. 323 (2002), no 3, 437--451.

\bibitem{kollar}{J. Koll\'ar,} \textit{Lectures on resolutions of singularities,} Annals of mathematics studies, Princeton University press, 2007.

\bibitem{moishezon}{B. Moishezon,} \textit{Modifications of complex varieties and the Chow lemma,} Lecture Notes in Mathematics, no. 412, Classification of algebraic varieties and compact complex manifolds, Springer-Verlag Heidelberg 1974, pp. 133--139.

\bibitem{oguiso}{K. Oguiso,} \textit{Simple abelian varieties and primitive automorphisms of null entropy of surfaces,} arXiv: 1412.2535.

\bibitem{roberts}{J. Roberts,} \textit{Chow's moving lemma,} in Algebraic geometry, Oslo 1970, F. Oort (ed.), Wolters-Noordhoff Publ. Groningnen (1972), 89--96.


\bibitem{russakovskii-shiffman}{A. Russakovskii and B. Shiffman,} \textit{Value distributions for sequences of rational mappings and complex dynamics,} Indiana Univ. Math. J. 46 (1997), 897--932.

\bibitem{shafarevich}{I. R. Shafarevich,} \textit{Basic algebraic geometry 1,} 2nd revised and expanded version, Springer-Verlag Berlin Heidelberg New York 1994.

\bibitem{truong2}{T. T. Truong,} \textit{The simplicity of the first dynamical degree of a meromorphic map,} Accepted in Michigan Math. Jour.

\bibitem{xie}{J. Xie,} \textit{Periodic points of birational maps on projective surfaces,} arXiv: 1106.1825.

\bibitem{yomdin}{Y. Yomdin,}\textit{Volume growth and entropy,} Israel J. Math. {\bf 57} (3), (1987) 285--300.

\bibitem{zariski1}{O. Zariski,} \textit{The reduction of the singularities of an algebraic surface,} Annals Math. (2) 40 (3), 639--689.

\bibitem{zariski2}{O. Zariski,} \textit{Reduction of the singularities of algebraic three dimensional varieties,} Annals Math. (2) 45 (3), 472--542.

\end{thebibliography}
\end{document}